\numberwithin{equation}{section}
\theoremstyle{plain}
\newtheorem{thm}{Theorem}[section]
\newtheorem{theorem}[thm]{Theorem}
\newtheorem*{thma}{Theorem A}
\newtheorem*{thmb}{Theorem B}
\newtheorem{lemma}[thm]{Lemma}
\newtheorem{proposition}[thm]{Proposition}
\theoremstyle{definition}
\newtheorem{remark}[thm]{Remark}
\newtheorem{definition}[thm]{Definition}
\newtheorem{example}[thm]{Example}
\numberwithin{equation}{section}
\newtheorem{report}{report}
\newtheorem{exercise}{excercise}[section]
\renewcommand{\labelenumi}{\rm{(}\arabic{enumi}\rm{)}}
\def\subsection{\@startsection{subsection}{1}%
  \z@{.5\linespacing\@plus.7\linespacing}{-.5em}%
  {\normalfont\itshape}}
\def\subsection{\@startsection{subsection}{2}%
  \z@{.5\linespacing\@plus.7\linespacing}{.3\linespacing}%
  {\normalfont\bfseries}}
\DeclarePairedDelimiter{\abs}{\lvert}{\rvert}
\newcommand{\id}{\mathrm{id}}
\title[]{The minimum value of the first dynamical degrees of automorphisms of complex simple abelian varieties with fixing dimensions}
\author{Yutaro Sugimoto
}
\begin{document}
\maketitle

\begin{abstract}
 We consider the minimum value of the first dynamical degrees, which are larger than $1$, of automorphisms for prime dimensional complex simple abelian varieties.
 Also, we calculate the minimum value of the first dynamical degrees, which are larger than $1$, of automorphisms of complex simple abelian varieties with fixing the dimensions from $2$ to $10$.
\end{abstract}

\section{Introduction}
 Let $X$ be a compact K\"{a}hler manifold and $f\colon X\dashrightarrow X$ be its dominant meromorphic map.
 The first dynamical degree of $f$ is defined by
 \begin{align*}
  \lambda_1(f)=\lim_{n\to+\infty}||(f^n)^{*}:\mathrm{H}^{1,1}(X)\rightarrow\mathrm{H}^{1,1}(X)||^{\frac{1}{n}}
 \end{align*}
 where $||\cdot||$ is a norm and $\mathrm{H}^{1,1}(X)$ is the $(1,1)$-Dolbeault cohomology.
 Especially if $X$ is a complex simple abelian variety and $f$ is an automorphism, 
 \begin{align*}
  \lambda_1(f)&=\lim_{n\to+\infty}||(f^n)^{*}:\mathrm{H}^{1,1}(X)\rightarrow\mathrm{H}^{1,1}(X)||^{\frac{1}{n}}\\
              &=\lim_{n\to+\infty}||(f^{*})^n:\mathrm{H}^{1,1}(X)\rightarrow\mathrm{H}^{1,1}(X)||^{\frac{1}{n}}\\
              &=\rho(f^{*}:\mathrm{H}^{1,1}(X)\rightarrow\mathrm{H}^{1,1}(X))
 \end{align*}
 and we only consider this case in this paper.
 It is known that $\lambda_1(f)\geq1$ (cf.\ \cite{DS05} or \cite{DS17}) and if $g=1$, then $\lambda_1(f)=1$ (cf.\ \cite[Definition 4.1]{DS17}).\par
 In our past study, we prove the next theorem.
 \begin{theorem}[{cf.\ \cite[Theorem 7.2]{Sug23}}]
  For an integer $g\geq2$, the set
  \begin{align*}
   S_g:=\left\{
         \begin{array}{l}
          \text{the first dynamical degrees of automorphisms}\\
          \text{of simple abelian varieties over $\mathbb{C}$ whose dimension is $g$}
         \end{array}
        \right\}\setminus\{1\}
  \end{align*}
  has the minimum value.
 \end{theorem}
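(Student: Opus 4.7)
The plan is to translate $\lambda_1(f)$ into a purely linear-algebraic quantity attached to an integer matrix, and then apply a Northcott-type finiteness argument.

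First, I would compute $\lambda_1(f)$ via the analytic representation. Writing $X = V/\Lambda$ with $V \cong \mathbb{C}^g$, one has $\mathrm{H}^{1,0}(X) \cong V^{*}$, $\mathrm{H}^{0,1}(X) \cong \overline{V}^{*}$, and, since $X$ is a complex torus, the wedge decomposition gives $\mathrm{H}^{1,1}(X) \cong V^{*} \otimes \overline{V}^{*}$. Letting $F\colon V \to V$ be the analytic lift of $f$ with eigenvalues $\alpha_{1},\dots,\alpha_{g}$, the eigenvalues of $f^{*}$ on $\mathrm{H}^{1,1}(X)$ are the $g^{2}$ products $\alpha_{i}\overline{\alpha_{j}}$, so
\[
 \lambda_{1}(f) \;=\; \rho\bigl(f^{*}|_{\mathrm{H}^{1,1}(X)}\bigr) \;=\; \Bigl(\max_{1\le i\le g}|\alpha_{i}|\Bigr)^{2}.
\]

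Next I would record the integrality constraints. Since $\mathrm{H}^{1}(X,\mathbb{C}) = \mathrm{H}^{1,0}(X)\oplus\mathrm{H}^{0,1}(X)$ and the second summand is the complex conjugate of the first, the characteristic polynomial $P_{f}(x)$ of $f^{*}$ on $\mathrm{H}^{1}(X,\mathbb{Z}) \cong \mathbb{Z}^{2g}$ is a monic polynomial in $\mathbb{Z}[x]$ of degree $2g$ whose complex roots form the multiset $\{\alpha_{1},\overline{\alpha_{1}},\dots,\alpha_{g},\overline{\alpha_{g}}\}$. Since $f$ is an automorphism, $f^{*} \in \mathrm{GL}_{2g}(\mathbb{Z})$, though this refinement is not strictly needed in the argument below.

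Finally I would invoke the following finiteness: for each real $M \ge 1$, the set of monic polynomials in $\mathbb{Z}[x]$ of degree $2g$ whose roots all lie in the closed disk of radius $\sqrt{M}$ is finite, because each coefficient (an elementary symmetric function of the roots) is bounded by $\binom{2g}{k} M^{k/2}$ and is an integer. Consequently $S_{g}\cap(1,M]$ is finite for every $M$. Setting $L := \inf S_{g}$ and choosing a sequence $s_{n} \in S_{g}$ with $s_{n}\to L$, the $s_{n}$ are eventually in the finite set $S_{g}\cap(1,L+1]$, hence eventually constant and equal to $L$; thus $L \in S_{g}$. The same finiteness rules out $L = 1$, since otherwise $S_{g}\cap(1,2]$ would contain a sequence strictly decreasing to $1$. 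I do not foresee a serious technical obstacle; indeed, the argument never uses simplicity of $X$, so the same statement holds for arbitrary complex abelian varieties, and even for arbitrary complex tori, of dimension $g$.
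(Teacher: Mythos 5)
Your proof is correct, and the key mechanism --- finitely many monic degree-$2g$ polynomials in $\mathbb{Z}[x]$ with all roots in a fixed disk, hence $S_g\cap(1,M]$ finite for every $M$ --- is exactly the finiteness principle the paper alludes to (see the remark following Proposition 5.5, where the author notes that "the coefficients of the minimal polynomial is restricted by the maximal absolute value of its roots by using the triangle inequality"). The paper proper does not reprove this theorem here (it cites its own [Sug23, Theorem 7.2]), but the surrounding framework routes everything through the endomorphism-algebra classification (Types 1--4), working with the minimal polynomial of $\alpha\in\mathrm{End}_\mathbb{Q}(X)$ over $\mathbb{Q}$; your version works directly with the characteristic polynomial of $f^*$ on $\mathrm{H}^1(X,\mathbb{Z})$, which packages the same integrality and degree bound without invoking any division-algebra structure. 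That repackaging is what makes your closing observation legitimate: since you never used that $\mathrm{End}_\mathbb{Q}(X)$ is a division ring, the existence of the minimum indeed holds for arbitrary complex tori of dimension $g$, whereas the paper's endomorphism-algebra framework is genuinely needed only for the subsequent explicit computations of $m(g)$, not for existence. The one point you leave implicit is that $S_g\neq\emptyset$ for $g\geq 2$ (so that $\inf S_g$ is a finite number and the convergent sequence exists); this is a triviality once one exhibits a single automorphism with $\lambda_1>1$ (e.g.\ via the Type~1 construction of Proposition 3.2 with a real quadratic $K$), and the paper's later constructions supply it, but a fully self-contained proof should say a word about it.
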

 Denote the minimum value of $S_g$ by $m(g)$ and in this paper, we study $m(g)$ for some $g$ in detail.\par
 In Section \ref{The minimum value of the first dynamical degrees for simple abelian varieties with prime dimensions}, we consider the minimum value for prime dimensional simple abelian varieties.
 \begin{thma}
  For a prime number $p$, we get the next result about $m(p)$.
  \begin{align*}
   \begin{cases}
    m(p)=4\mathrm{cos}^2\left(\frac{\pi}{2p+1}\right) & \text{$($if $2p+1$ is also a prime$)$} \\
    4+4^{-2p-3}<m(p)<4.3264=(2.08)^2 & \text{$($otherwise$)$}
   \end{cases}
  \end{align*}
 \end{thma}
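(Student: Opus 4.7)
My plan is to reduce the problem to a statement about houses of totally positive units in a totally real field of degree $p$, then handle the two cases by a Kronecker-type dichotomy together with explicit CM constructions.

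\textbf{Step 1: Structural reduction.} For a simple complex abelian variety $X$ of prime dimension $p$ with an automorphism $f$ satisfying $\lambda_{1}(f)>1$, Albert's classification of $D=\mathrm{End}^{0}(X)$ leaves essentially two survivors. Types II and III are eliminated for $p$ odd by the constraint $2[Z(D):\mathbb{Q}]\mid p$; $D=\mathbb{Q}$ and $D$ imaginary quadratic are eliminated because their unit groups are finite and force $\lambda_{1}=1$. The remaining possibilities are Type I with $D=K$ totally real of degree $p$ and Type IV with $D=K$ a CM field of degree $2p$, and in both cases $f$ corresponds to a non-torsion unit $\alpha\in\mathcal{O}_{K}^{\times}$. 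A direct computation of the action on $\mathrm{H}^{1,0}(X)$ through the embeddings of $K$ gives $\lambda_{1}(f)=\max_{\sigma}|\sigma(\alpha)|^{2}$. In Type IV the unit $\alpha/\bar\alpha$ has all embeddings on the unit circle and is therefore a root of unity by Kronecker's theorem, so $|\sigma(\alpha)|^{2}=\sigma(\alpha\bar\alpha)$. In either case one reaches $\lambda_{1}(f)=\mathrm{house}(\gamma)$ for a totally positive non-trivial unit $\gamma$ of a totally real field of degree $p$ (take $\gamma=\alpha^{2}$ in Type I, $\gamma=\alpha\bar\alpha$ in Type IV).

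\textbf{Step 2: Kronecker dichotomy and lower bound.} Assume $\mathrm{house}(\gamma)<4$. Then $\delta=\gamma-2$ has all conjugates in $(-2,2)$, and the classical Kronecker lemma forces $\delta=\zeta+\zeta^{-1}$ for a root of unity $\zeta$, giving $\gamma=4\cos^{2}(\pi/n)$ with $\varphi(n)=2p$. When $2p+1$ is prime, the minimal such $n$ is $n=2p+1$, yielding $\mathrm{house}(\gamma)\geq 4\cos^{2}(\pi/(2p+1))$. When $2p+1$ is composite, a direct check (the only primes $q$ with $q-1\mid 2p$ lie in $\{2,3,2p+1\}$) rules out any $n$ satisfying $\varphi(n)=2p$, so necessarily $\mathrm{house}(\gamma)\geq 4$. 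To quantify the strict gap, let $P(t)\in\mathbb{Z}[t]$ be the minimal polynomial of $\gamma$; since $P(4)\neq 0$ we have $|P(4)|\geq 1$, and estimating each factor $4-\gamma_{i}$ with $\gamma_{i}<4$ by $4$ gives $\mathrm{house}(\gamma)>4+4^{-(p-1)}$, which is stronger than the claimed $4+4^{-2p-3}$.

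\textbf{Step 3: Matching upper bounds.} When $q=2p+1$ is prime, take $K=\mathbb{Q}(\zeta_{q})$ and $\alpha=1+\zeta_{q}$. The evaluation $\Phi_{q}(-1)=1$ shows $\alpha$ is a unit, and any primitive CM type on $K$ produces a simple $p$-dimensional CM abelian variety on which $\alpha$ acts as an automorphism with $\lambda_{1}=|1+\zeta_{q}|^{2}=4\cos^{2}(\pi/q)$. Primitive CM types exist because the only proper CM subfield of $K$ is the imaginary quadratic subfield (when present), so at most $2$ of the $2^{p}$ CM types are non-primitive. When $2p+1$ is composite I would exhibit a totally real field $L$ of degree $p$ containing a totally positive unit of house less than $(2.08)^{2}=4.3264$, for example via a small totally real algebraic unit obtained from a polynomial family related to Salem/Lehmer numbers, and realize the associated simple abelian variety in Type I or Type IV.

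\textbf{Main obstacle.} The composite case of the upper bound is the principal difficulty: one must produce, uniformly in $p$, an explicit totally real field together with a non-trivial totally positive unit of house below $4.3264$, and check that the resulting abelian variety is simple (i.e., primitivity of the associated CM type in the Type IV realization, or genericity of the Type I moduli point). The other ingredients (Albert classification, the Hodge-theoretic eigenvalue computation, Kronecker's lemma, and the $|P(4)|\geq 1$ estimate) are either structural or classical.
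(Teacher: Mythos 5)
Your Step 1 reduction and the Kronecker dichotomy in Step 2 follow the same route as the paper (Section 4 together with the reduction in the proof of the non-Sophie-Germain case). Your quantitative lower bound, however, takes a genuinely different and tighter path: instead of invoking Schinzel--Zassenhaus (which the paper uses to obtain $m(p) > 4 + 4^{-2p-3}$, after a separate lemma ruling out the value $4$), you estimate $1 \le |P(4)| = \prod_i |4-\gamma_i|$ directly, which simultaneously rules out $m(p)=4$ and gives the stronger bound $m(p) > 4 + 4^{-(p-1)}$. One small repair: the phrase ``each factor $4-\gamma_i$ with $\gamma_i<4$ by $4$'' tacitly assumes only one conjugate exceeds $4$; if $k>1$ conjugates do, observe that $(\gamma_1-4)^k \le \gamma_1-4$ whenever $\gamma_1-4<1$, so the same bound persists.

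The essential gap, which you correctly flag, is the upper bound $m(p) < (2.08)^2$ when $2p+1$ is composite. Producing, uniformly in $p$, a degree-$p$ trace polynomial with constant term $\pm 1$ and all real roots in $(-2.08,2.08)$ is exactly where the paper invests most of its effort: it builds two parametric families $P_n(x)=\bigl(x^{n-2}(x^3-x-1)+(x^3+x^2-1)\bigr)/(x-1)$ and $Q_m(x)=x^{m-3}(x^3-x-1)-(x^3+x^2-1)$ attached to the minimal Pisot polynomial $x^3-x-1$, classifies their cyclotomic divisors via an inscribed-angle argument, strips those factors off to obtain Salem polynomials $S_n$, $S'_m$, uses a Pisot-convergence estimate to bound the resulting trace-polynomial roots by $2.0799\cdots < 2.08$, characterizes via a period-$24$ residue condition when the trace polynomial has constant term $\pm 1$, and finally verifies by a period-$360$ table search that every degree $2p$ with $p>3$ prime actually occurs. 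Without a construction of this kind the composite case is unproven. You should also note that realizing a degree-$p$ totally positive unit $\gamma$ through the CM field $\mathbb{Q}(\sqrt{-\gamma})$ of degree $2p$ falls outside the parameter range where the paper's Type 4 construction automatically gives a \emph{simple} abelian variety, so a separate simplicity argument (the paper's isogeny-decomposition lemma for this case) is required; your appeal to primitivity of the CM type points in the right direction but would need to be made precise and does not cover the Type I realization you also invoke.
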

 \begin{remark}
  $p$ is called a Sophie Germain prime if $p$ and $2p+1$ are both prime numbers, and it is conjectured that there are infinitely many Sophie Germain primes (cf.\ \cite[Chapter 5.5.5]{Sho09}).
  Conversely, since there are infinitely many prime numbers $p$ such that $p\equiv1\ (\mathrm{mod}\ 3)$, there are infinitely many prime numbers $p$ which are not Sophie Germain primes.
 \end{remark}
 In Section \ref{The minimum value of the first dynamical degrees for simple abelian varieties with lower dimensions}, we consider the minimum value for lower dimensions.
 \begin{thmb}
  For $g=2$ to $10$, the value $m(g)$ is the following.
  \begin{table}[hbtp]
   \label{table}
   \begin{tabular}{|c|c|}
    \hline 
    dimension $g$ & minimum value $m(g)$ \\
    \hline
    $1$ & $(\text{all }1)$\\
    $2$ & $4\mathrm{cos}^2(\pi/5)=2.6180\cdots$ \\
    $3$ & $4\mathrm{cos}^2(\pi/7)=3.2469\cdots$ \\
    $4$ & $2\mathrm{cos}(\pi/5)=1.6180\cdots$ \\
    $5$ & $4\mathrm{cos}^2(\pi/11)=3.6825\cdots$ \\
    $6$ & $2\mathrm{cos}(\pi/7)=1.8019\cdots$ \\
    $7$ & $4.0333\cdots$ \\
    $8$ & $2\mathrm{cos}(\pi/5)=1.6180\cdots$ \\
    $9$ & $1.1509\cdots^2=1.3247\cdots$ \\
    $10$& $1.1762\cdots^2=1.3836\cdots$ \\
    \hline
   \end{tabular}
  \end{table}
 \end{thmb}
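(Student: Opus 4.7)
The plan is a case analysis over the nine dimensions $g \in \{2,\ldots,10\}$ and, within each $g$, over the Albert type of $\mathrm{End}^0(X)$ for simple $X$. Two uniform reductions underlie the whole argument. First, for any automorphism $f$ of an abelian variety $X$ of dimension $g$, one has $H^{1,1}(X) \cong H^{1,0}(X) \otimes \overline{H^{1,0}(X)}$, and since $f^{*}$ commutes with complex conjugation on $H^{1}(X,\mathbb{R})$ this gives
\begin{align*}
 \lambda_{1}(f) = \rho\bigl(f^{*}\colon H^{1,1}(X) \to H^{1,1}(X)\bigr) = \bigl(\max_{1 \leq i \leq g}|\alpha_{i}|\bigr)^{2},
\end{align*}
where $\alpha_{1},\ldots,\alpha_{g}$ are the eigenvalues of $f^{*}$ acting on $H^{1,0}(X)$. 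The $\alpha_{i}$ are algebraic units, so the task is to minimize $(\max_{i}|\alpha_{i}|)^{2}$ subject to being $>1$. Second, the Albert classification restricts $\mathrm{End}^0(X)$ for simple $X$ to the four types (totally real, indefinite quaternion over totally real, definite quaternion over totally real, or division algebra over CM), and in each type the degree of the center and the rank of $H^{1}(X,\mathbb{Q})$ as a module are constrained by $g$. For each $g \leq 10$ these constraints leave only a short finite list of configurations; in each configuration the eigenvalue multiset $\{\alpha_{1},\ldots,\alpha_{g}\}$ is determined up to Galois data by a single algebraic unit $\alpha$ sitting in an appropriate subfield of $\mathrm{End}^0(X)$.

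With these reductions, the problem becomes an extremal problem about small algebraic units. The Sophie Germain case of Theorem~A gives $m(g) = 4\cos^{2}(\pi/(2g+1))$ immediately for $g = 2, 3, 5$. For $g = 4$ and $g = 6$, a smaller value is reached by adjoining a square root: the unit $\sqrt{2\cos(\pi/n)}$ (with $n = 5, 7$ respectively) lies in a quadratic extension of the totally real field $\mathbb{Q}(2\cos(\pi/n))$, and this quadratic extension can be realized as a maximal subfield of a quaternion algebra over $\mathbb{Q}(2\cos(\pi/n))$, giving a simple abelian variety of dimension $2[\mathbb{Q}(2\cos(\pi/n)):\mathbb{Q}]$ whose largest $|\alpha_{i}|^{2}$ equals $2\cos(\pi/n)$. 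The value $m(7) = 4.0333\ldots$ is isolated by refining the non-Sophie-Germain bounds of Theorem~A with a direct search among units in CM fields of the admissible degrees; $m(8) = 2\cos(\pi/5)$ arises from a related quaternion-over-totally-real construction at dimension eight. Finally, $m(9)$ and $m(10)$ are squares of small reciprocal units placed inside a quaternion algebra over a totally real base, with $m(10)$ equal to the square of Lehmer's number of degree $10$.

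For each $g$ I would then exhibit an explicit simple abelian variety $X$ of dimension $g$ together with an automorphism $\phi$ realizing $m(g)$, and match this with a lower bound by running through the finite list of Albert configurations from the classification step and applying the appropriate extremal input to each. The main obstacle is the lower bound: one must rule out every competing Albert configuration of the given dimension. This is most delicate for $g = 7, 9, 10$, where the extremal element lives inside a quaternion algebra and one must compare against smaller potential units in every Type~I and Type~IV configuration of the same dimension. The argument depends on explicit lower bounds for the smallest totally positive unit $>1$ of a totally real field of given small degree and for the smallest algebraic unit exceeding $1$ in a CM field of given small degree, and assembling these bounds is where the substantive number-theoretic work concentrates.
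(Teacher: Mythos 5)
Your overall strategy---reduce to a finite case analysis over $g$ and over the Albert type, minimize the largest eigenvalue modulus within each configuration, and then realize the extremal value by an explicit simple abelian variety---is exactly the route the paper takes, and your claims for the Sophie Germain primes, the quaternion constructions at $g=4,6,8$, and the Lehmer number at $g=10$ all line up with the actual argument. The gap is at $g = 9$: you assert that $m(9)$, like $m(10)$, is realized inside a quaternion algebra over a totally real base. But the Albert restriction for Type 2 and Type 3 is $2e \mid g$ with $e = [K:\mathbb{Q}]$, which is impossible for $g=9$; quaternion types simply do not occur in dimension nine. The paper's realization of $m(9) = 1.3247\cdots$ instead lives in Type 4 with $d = 3$: a degree-nine central \emph{division} algebra $B$ over the CM field $F = \mathbb{Q}(\sqrt{-23})$, built as a cyclic algebra on the cubic cyclic extension $E = \mathbb{Q}(\lambda,\sqrt{-23})$ of $F$ (with $\lambda$ the real root of $x^3 - x^2 + 1$), with divisionality certified by a norm-obstruction argument at the prime $3$ and a positive anti-involution of the second kind constructed by hand and then made positive. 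Your framework, as written, never reaches this configuration, so you would fail both to realize the claimed minimum and to correctly enumerate the competing Type 4 configurations needed for the lower bound at $g=9$.

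A secondary issue is that the lower-bound side is substantially more computational than your proposal acknowledges. Beyond the cyclotomic and Kronecker reductions, the paper relies on explicit finite enumerations of small totally real units of degrees $3,4,5$ and of small units in various imaginary quadratic rings and CM quartic fields (its appendix tables, generated by computer search), together with polynomial-by-polynomial checks that candidate quadratic or quartic factors with cyclotomic constant terms such as $\zeta_8$ or $\zeta_{12}$ cannot beat the claimed minima; it also needs a separate simplicity lemma to guarantee that the Type 4 construction at odd prime dimension actually produces a \emph{simple} abelian variety. These are not abstract extremal inputs but concrete finite searches and ad hoc verifications, and for $g=7,8,9,10$ the argument does not close without them.
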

 \medskip\noindent
 {\bf Acknowledgements.} The author thanks Professor Keiji Oguiso for suggesting the direction of this paper.

\section{Algebraic preliminaries}
 This section is devoted to the algebraic preparations for the following sections.
 Some of the results are also used in our past study (cf. \cite{Sug23}).
 \begin{definition}[{cf.\ \cite[Chapter 1.2]{Lan83}}]\label{equivalent condition for CM-field}
  A CM-field is a number field $K$ which satisfies the following equivalent conditions.
  \begin{enumerate}
   \item $K$ is a totally complex field, and is a quadratic extension of some totally real number field.
   \item $K$ is closed under the complex conjugation and all $\mathbb{Q}$-embeddings $K\hookrightarrow\mathbb{C}$ commute with the complex conjugation, and $K$ is not real.
  \end{enumerate}
 \end{definition}
 For a number field $K$, denote its ring of integer by $\mathcal{O}_K$ and define
 \begin{align*}
  U_K:=\{x\in\mathcal{O}_K\setminus\{0\}\mid x^{-1}\in\mathcal{O}_K\}
 \end{align*}
 as the group of algebraic units.\par
 The next lemma follows from the definition of CM-fields. 
 \begin{lemma}[{cf.\ \cite[Chapter 1.2]{Lan83}}]\label{unit element}
  Let $L$ be a totally real number field.
  \begin{enumerate}
   \item If $L'$ is a totally complex quadratic extension of $L$, then for $x\in U_{L'}$, $\abs{x}^2$ is an element of $U_L$.
   \item Fix $a\in U_L\setminus\{0\}$.
         Then the next two conditions are equivalent.
         \begin{enumerate}
          \item There exist a totally complex quadratic extension $L'/L$ and $x\in U_{L'}$ which satisfies $\abs{x}^2=a$.
	  \item The conjugate elements of $a\in L$ are all positive (i.e., $a$ is totally positive).
         \end{enumerate}
	 Moreover, these conditions are equivalent to the totally realness of $L(\sqrt{a})$.
  \end{enumerate}
 \end{lemma}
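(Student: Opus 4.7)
For part (1), the plan is to use Definition \ref{equivalent condition for CM-field}(2): since $L'$ is a CM field (being a totally complex quadratic extension of the totally real field $L$), complex conjugation restricts to a well-defined $\mathbb{Q}$-automorphism $\sigma$ of $L'$, and $\sigma$ fixes $L$ pointwise because $L$ is totally real. Then $\abs{x}^{2}=x\cdot\sigma(x)$ is fixed by $\sigma$, hence lies in $L$; it is an algebraic integer because $\mathcal{O}_{L'}$ is $\sigma$-stable, and its inverse $\sigma(x^{-1})\cdot x^{-1}$ is likewise an algebraic integer lying in $L$. Intersecting $\mathcal{O}_{L'}$ with $L$ yields $\mathcal{O}_L$, so $\abs{x}^{2}\in U_L$.

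For part (2), I would prove (a)$\Rightarrow$(b) by choosing any $\mathbb{Q}$-embedding $\tau\colon L\hookrightarrow\mathbb{R}$ and extending it to $\widetilde{\tau}\colon L'\hookrightarrow\mathbb{C}$; then $\tau(a)=\abs{\widetilde{\tau}(x)}^{2}$ is nonnegative, and strictly positive because $x\neq 0$. For (b)$\Rightarrow$(a), the natural candidate is the explicit construction $L':=L(\sqrt{-a})$ with $x:=\sqrt{-a}$: since $a$ is totally positive and $L$ is totally real, $-a$ is totally negative, so $L'$ is a totally complex quadratic extension of $L$, hence CM. One then has $x^{2}=-a$, so $\abs{x}^{2}=x\cdot(-x)=a$ using the conjugation $\sigma$ from part (1). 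To check $x\in U_{L'}$, one computes $x^{-1}=-x/a$, which lies in $\mathcal{O}_{L'}$ precisely because $a\in U_L$ gives $a^{-1}\in\mathcal{O}_L$. Finally, the equivalence with the totally realness of $L(\sqrt{a})$ follows from the observation that in each embedding $\tau\colon L\hookrightarrow\mathbb{R}$, the extensions to $L(\sqrt{a})$ send $\sqrt{a}$ to $\pm\sqrt{\tau(a)}$, which is real exactly when $\tau(a)>0$ (and the case where $a$ is already a square in $L$ is trivial since then $L(\sqrt{a})=L$).

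The main obstacle I anticipate is the unit condition in the construction for (b)$\Rightarrow$(a): one must verify not only that $\sqrt{-a}$ is an algebraic integer, which is immediate from the monic polynomial $X^{2}+a$, but also that its inverse lies in $\mathcal{O}_{L'}$, and this crucially uses the hypothesis $a\in U_L$ rather than merely $a\in\mathcal{O}_L$. The rest of the argument is routine bookkeeping with $\mathbb{Q}$-embeddings and the Galois automorphism $\sigma$.
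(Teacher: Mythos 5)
Your proof is correct and follows essentially the same route as the paper: part (1) passes to the Galois conjugation $\sigma$ on the CM field $L'$ and uses $\mathcal{O}_{L'}\cap L=\mathcal{O}_L$, and part (2) uses the compatibility of $\mathbb{Q}$-embeddings with complex conjugation for (a)$\Rightarrow$(b) and the explicit extension $L'=L(\sqrt{-a})$ for (b)$\Rightarrow$(a). In fact your version is slightly more complete than the paper's: it explicitly exhibits $x=\sqrt{-a}$, checks $\abs{x}^2=a$, and verifies $x^{-1}=-x/a\in\mathcal{O}_{L'}$ using $a\in U_L$, whereas the paper leaves the existence of such a unit $x$ implicit after constructing $L'$.
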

 \begin{proof}
  We start with the proof of (1).
  By the assumption, $x, \frac{1}{x}\in \mathcal{O}_{L'}$.
  Since $[L':L]=2$ and $L$ is totally real, $x\overline{x}, \frac{1}{x\overline{x}}\in\mathcal{O}_{L}$.
  Thus, $\abs{x}^2=x\overline{x}\in U_L$.\par
  Next, consider the proof of (2). 
  If all conjugate elements of $a\in L$ are positive, then by defining $L':=L(\sqrt{-a})$, $L'$ is a totally complex number field.
  This implies (ii)$\Rightarrow$(i).\par
  Assume the condition (i).
  Fix a $\mathbb{Q}$-embedding $\sigma':L\hookrightarrow\mathbb{C}$ and take a $\mathbb{Q}$-embedding $\sigma:L'\hookrightarrow\mathbb{C}$ with $\sigma'=\sigma\circ i$ where $i:L\hookrightarrow L'$ is the natural inclusion.
  By Definition \ref{equivalent condition for CM-field}, the complex conjugation commutes with $\sigma'$ and so
  \begin{align*}
   \sigma'(a)=\sigma'(\abs{x}^2)=\sigma'(x\overline{x})=\sigma'(x)\sigma'(\overline{x})=\sigma'(x)\overline{\sigma'(x)}=\abs{\sigma'(x)}^2.
  \end{align*}
  Thus, all the conjugates of $a$ is positive and this implies (i)$\Rightarrow$(ii) and the proof is concluded.
 \end{proof}
 In this paper, we denote $\zeta_n=\mathrm{cos}\left(\frac{2\pi}{n}\right)+i\cdot\mathrm{sin}\left(\frac{2\pi}{n}\right)$ for a positive integer $n$.
 Denote the minimal polynomial of $\zeta_n$ over $\mathbb{Q}$ by $\Phi_n(x)\in\mathbb{Z}[x]$ and called a cyclotomic polynomial.
 Now the degree of $\Phi_n(x)$ is Euler's totient function $\varphi(n)$.
 Let $\Psi_n(x)$ be the minimal polynomial of $2\mathrm{cos}\left(\frac{2\pi}{n}\right)$ over $\mathbb{Q}$ (define $\Psi_4(x)=x$ for $n=4$).
 The constant term of $\Psi_n(x)$ is calculated in \cite{ACR16} as below.
 \begin{proposition}[{cf.\ \cite{ACR16}}]\label{constant term of minimal polynomial}
  The absolute value of the constant term of $\Psi_n(x)$ is equal to $1$ except the following cases.
  \begin{enumerate}
  \renewcommand{\labelenumi}{\rm{(\roman{enumi})}}
   \item $\abs{\Psi_n(0)}=0$ for $n=4$
   \item $\abs{\Psi_n(0)}=2$ for $n=2^m$, with $m\in\mathbb{Z}_{\geq0}\setminus\{2\}$
   \item $\abs{\Psi_n(0)}=p$ for $n=4p^k$, with $k\in\mathbb{Z}_{>0},\ p\text{ is an odd prime number}$
  \end{enumerate}
 \end{proposition}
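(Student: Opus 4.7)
The plan is to reduce the question to evaluating the $n$-th cyclotomic polynomial at $i$, and then to compute $\abs{\Phi_n(i)}$ by a case split on the $2$-adic valuation $v_2(n)$. For $n\geq 3$ the primitive $n$-th roots of unity come in conjugate pairs $\{\zeta_n^{k},\zeta_n^{-k}\}$, so $\Phi_n$ is self-reciprocal and factors as
\[
\Phi_n(x)=\prod_{k}\bigl(x^{2}-(\zeta_n^{k}+\zeta_n^{-k})\,x+1\bigr)=x^{\varphi(n)/2}\,\Psi_n(x+x^{-1}).
\]
Specializing to $x=i$ makes $x+x^{-1}=0$, hence $\Phi_n(i)=i^{\varphi(n)/2}\Psi_n(0)$ and therefore $\abs{\Psi_n(0)}=\abs{\Phi_n(i)}$. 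The degenerate cases $n\in\{1,2,4\}$ can be read off directly from the definitions $\Psi_1=x-2$, $\Psi_2=x+2$, $\Psi_4=x$, which already supply (i) together with the $n\in\{1,2\}$ instances of (ii).

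For the remaining $n\geq 3$ with $n\neq 4$, I would proceed by $v_2(n)$. If $n$ is odd, Möbius inversion gives
\[
\abs{\Phi_n(i)}=\prod_{d\mid n}\abs{i^{d}-1}^{\mu(n/d)}=(\sqrt{2})^{\sum_{d\mid n}\mu(n/d)}=1,
\]
since each $d\mid n$ is odd (so $\abs{i^{d}-1}=\sqrt{2}$) and the Möbius sum vanishes for $n>1$. If $v_2(n)=1$, write $n=2m$ with $m\geq 3$ odd; the identity $\Phi_{2m}(x)=\Phi_m(-x)$ reduces to the previous case and yields $\abs{\Phi_n(i)}=1$. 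If $v_2(n)\geq 2$, the prime-reduction identity $\Phi_n(x)=\Phi_{n/2}(x^{2})$ (valid because $2\mid n/2$) gives $\Phi_n(i)=\Phi_{n/2}(-1)$; when $v_2(n)=2$, i.e.\ $n=4m$ with odd $m\geq 3$, one more application of $\Phi_{2m}(x)=\Phi_m(-x)$ simplifies this to $\Phi_m(1)$, and when $v_2(n)\geq 3$ a second iteration of the prime-reduction identity simplifies it to $\Phi_{n/4}(1)$. In both subcases the classical evaluation $\Phi_k(1)=p$ when $k=p^{j}$ is a prime power and $\Phi_k(1)=1$ otherwise (for $k\geq 2$) picks out precisely the remaining exceptional families: $n=4p^{k}$ with $p$ odd prime (case (iii)) and $n=2^{m}$ with $m\geq 3$ (completing (ii)), with $\abs{\Psi_n(0)}=1$ everywhere else.

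The main bookkeeping hurdle will be keeping the cyclotomic reduction identities $\Phi_{np}(x)=\Phi_n(x^{p})$ (for $p\mid n$) and $\Phi_{2m}(x)=\Phi_m(-x)$ (for odd $m\geq 3$) applied strictly within their valid ranges, and ensuring that the degenerate values at small $n$ do not corrupt either the Möbius inversion argument for the odd case or the iterated prime-reduction argument for $v_2(n)\geq 3$; once the ranges are tracked carefully, the three exceptional families fall out mechanically from the prime-power vs.\ non-prime-power dichotomy for $\Phi_k(1)$.
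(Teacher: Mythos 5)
The paper does not give its own proof of this proposition; it is cited directly from the reference \cite{ACR16}, so there is no in-paper argument to compare against. Your proof is correct and self-contained, and I will verify the key steps.

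The reduction $\abs{\Psi_n(0)}=\abs{\Phi_n(i)}$ for $n\ge 3$ is exactly the $x=i$ specialization of the identity $\Phi_n(x)=x^{\varphi(n)/2}\Psi_n(x+1/x)$ that the paper records right after the proposition; notably, the same ``evaluate at $i$'' trick reappears later in the paper as Lemma \ref{constant term}, so the step is well aligned with the paper's toolkit. The degenerate cases $n\in\{1,2,4\}$ are correctly read off from $\Psi_1=x-2$, $\Psi_2=x+2$, $\Psi_4=x$. For odd $n\ge 3$, the M\"obius product $\Phi_n(x)=\prod_{d\mid n}(x^d-1)^{\mu(n/d)}$ together with $\abs{i^d-1}=\sqrt{2}$ for odd $d$ gives $\abs{\Phi_n(i)}=(\sqrt{2})^{\sum_{d\mid n}\mu(n/d)}=1$ since $n>1$. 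For $v_2(n)=1$, $\Phi_{2m}(x)=\Phi_m(-x)$ (valid for odd $m\ge 3$, where $\varphi(m)$ is even) and real coefficients give $\abs{\Phi_m(-i)}=\abs{\Phi_m(i)}=1$. For $v_2(n)\ge 2$, the prime-reduction $\Phi_n(x)=\Phi_{n/2}(x^2)$ (legal since $2\mid n/2$) reduces to $\Phi_{n/2}(-1)$; one further step gives $\Phi_m(1)$ when $v_2(n)=2$, $n=4m$, $m$ odd $\ge 3$, and $\Phi_{n/4}(1)$ when $v_2(n)\ge 3$. The classical evaluation $\Phi_k(1)=p$ for prime powers $k=p^j$ (and $=1$ otherwise, $k\ge 2$) then yields case (iii) for $n=4p^k$ (since $m$ odd prime-power forces $p$ odd), case (ii) for $n=2^m$, $m\ge 3$ (since $n/4$ even and prime-power forces $p=2$, including the edge $n=8$ via $\Phi_2(1)=2$), and $1$ everywhere else. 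All the range restrictions you flagged are in fact respected, so the argument is complete.

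One cosmetic remark: you might explicitly record that $\abs{\Phi_m(-i)}=\abs{\Phi_m(i)}$ follows from the real coefficients of $\Phi_m$ and $\overline{i}=-i$, as this is the only place where a modulus computation is not immediate from the M\"obius product; otherwise the write-up is clean.
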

 Also for $n\geq3$, the polynomial $\Psi_n(x)$ satisfies
 \begin{align*}
  \Phi_n(x)=x^{\frac{\varphi(n)}{2}}\Psi_n\left(x+\frac{1}{x}\right)
 \end{align*}
 and $\Psi_n(x)$ is of degree $\frac{\varphi(n)}{2}$.\par
 The next lemma, which follows from Kronecker's theorem, is useful to determine the minimum value of the first dynamical degrees.
 \begin{lemma}\label{cyclotomic}
  Let $P(x)=x^n+a_1x^{n-1}+\cdots+a_n\in\mathbb{Z}[x]$ be an irreducible monic polynomial whose roots are all real. 
  If the maximal absolute value of the roots of $P(x)$ is less than $2$, then $P(x)$ is a polynomial which satisfies $x^nP\left(x+\frac{1}{x}\right)=\Phi_N(x)$ for some $N\in\mathbb{Z}_{\geq3}$.
  This implies $P(x)=\Psi_N(x)$.
  Also, the roots of $P(x)$ can be written as $\zeta_N^m+\frac{1}{\zeta_N^m}=2\mathrm{cos}(\frac{2m\pi}{N})$ for some $m\in\mathbb{Z}_{>0}$ with $m<N$, $\mathrm{gcd}(N,m)=1$.
  Moreover, the maximal absolute value of the roots of $P(x)$ can be written as
  \begin{align*}
   \left\{
    \begin{array}{ll}
     2\mathrm{cos}\left(\frac{2\pi}{N}\right) & (\text{if }N\text{ is even})\\ [+5pt]
     2\mathrm{cos}\left(\frac{\pi}{N}\right) & (\text{if }N\text{ is odd})
    \end{array}.
   \right.
  \end{align*}
 \end{lemma}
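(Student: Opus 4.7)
The strategy is to apply Kronecker's theorem after the substitution $x \mapsto x + x^{-1}$, which turns each real root of $P$ in $(-2, 2)$ into a complex-conjugate pair on the unit circle. Concretely, I form $Q(x) := x^n P(x + x^{-1})$; expanding $(x + x^{-1})^k = (x^2 + 1)^k / x^k$ shows that $Q \in \mathbb{Z}[x]$ is monic of degree $2n$. Writing each real root as $\alpha_j = 2\cos\theta_j$ with $\theta_j \in (0, \pi)$, the $2n$ roots of $Q$ are exactly $\beta_j := e^{i\theta_j}$ and $\beta_j^{-1}$, all of modulus $1$ and distinct from $\pm 1$ since $|\alpha_j| < 2$.

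Kronecker's theorem, applied to each irreducible factor of $Q$ in $\mathbb{Z}[x]$, then forces every $\beta_j$ to be a root of unity, say $\beta_j = \zeta_{N_j}^{m_j}$ with $\gcd(m_j, N_j) = 1$ and $N_j \ge 3$. Hence $\alpha_j = 2\cos(2m_j\pi/N_j)$ generates the maximal totally real subfield of $\mathbb{Q}(\zeta_{N_j})$, whose minimal polynomial over $\mathbb{Q}$ is $\Psi_{N_j}(x)$. Since $P$ is irreducible with $\alpha_1$ as a root, $P = \Psi_{N_1}$; since each $\alpha_j$ is then a root of $\Psi_{N_1}$, it has the form $2\cos(2k\pi/N_1)$ with $\gcd(k, N_1) = 1$, which forces $\beta_j \in \{\zeta_{N_1}^{\pm k}\}$ and hence $N_j = N_1 =: N$. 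The identity $\Phi_N(x) = x^{\varphi(N)/2} \Psi_N(x + x^{-1})$ quoted above the lemma, with $n = \varphi(N)/2$, then yields $x^n P(x + x^{-1}) = \Phi_N(x)$.

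For the maximum absolute value among the roots $2\cos(2m\pi/N)$ with $\gcd(m, N) = 1$, the symmetry $m \leftrightarrow N - m$ lets me restrict to $1 \le m \le N/2$; then $|2\cos(2m\pi/N)| = 2\cos\!\left(\pi \min(2m, N - 2m)/N\right)$, so the maximum corresponds to the smallest value of $\min(2m, N - 2m)$ over admissible $m$. For odd $N$, the choice $m = (N-1)/2$ is coprime to $N$ (since $\gcd(N-1, N) = 1$) and attains the minimum value $1$, producing $2\cos(\pi/N)$. For even $N$, both $2m$ and $N - 2m$ are even and the exclusion of $m = N/2$ forces $N - 2m \ge 2$, so the minimum $2$ is attained already by $m = 1$, producing $2\cos(2\pi/N)$. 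The main subtlety lies in this final case analysis: in the even case, it is precisely the parity constraint that prevents $\min(2m, N - 2m)$ from dropping below $2$, so one must rule out the possibility of any other coprime $m$ surpassing the candidates above.
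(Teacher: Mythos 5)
The paper gives no proof of this lemma—it merely remarks that it "follows from Kronecker's theorem"—so there is no proof to compare against. Your write-up is a correct and complete version of exactly that intended argument: you form $Q(x)=x^nP(x+x^{-1})$, check it is monic of degree $2n$ in $\mathbb{Z}[x]$ with all roots on the unit circle and distinct from $\pm1$, invoke Kronecker to conclude each root is a root of unity, identify $P$ as $\Psi_N$ via the minimal polynomial of $2\cos(2m\pi/N)$, and carry out the parity case analysis for the maximal $|2\cos(2m\pi/N)|$ over $\gcd(m,N)=1$. The only small point worth making explicit is that in the even-$N$ case one must first note $m\neq N/2$ (since $\gcd(N/2,N)>1$ for $N\geq 3$) before concluding $N-2m\geq 2$; you do this implicitly but could state it. Otherwise the proof is sound and supplies the detail the paper omits.
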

 Let $L/K$ be a field extension of number fields with $[L:K]=n$ and $\mathcal{O}_L$ and $\mathcal{O}_K$ be their ring of integers.
 Then $\mathcal{O}_L$ and $\mathcal{O}_K$ are Dedekind domains.
 For a non-zero prime ideal $\mathfrak{p}\subset\mathcal{O}_K$, there is the prime ideal factorization $\mathfrak{p}\mathcal{O}_L=\mathfrak{P}_1^{e_1}\cdots\mathfrak{P}_g^{e_g}$ since $\mathcal{O}_L$ is a Dedekind domain.
 We say that $\mathfrak{P}_i$ is over $\mathfrak{p}$.\par
 Now $L/K$ is a separable extension of degree $n$ and so
 \begin{align*}
  n=\sum_{i=1}^g e_if_i.
 \end{align*}
 Moreover, if $L/K$ is a Galois extension, by Hilbert's ramification theory (cf.\ \cite[Chapter I, \S9]{Neu99}), 
 \begin{align*}
  e_1=\cdots=e_g=e, f_1=\cdots=f_g=f
 \end{align*}
 hold and this implies $n=efg$.
 \begin{remark}[{cf.\ \cite[Chapter I, Proposition 8.3]{Neu99}}]\label{construction of prime ideal factorization}
  Let $L/K$ be a field extension of number fields and let $\theta\in\mathcal{O}_L$ be a primitive element for $L/K$ (i.e., $L=K(\theta)$).\par
  Denote the conductor of $\mathcal{O}_K[\theta]$ by $\mathfrak{J}\subset\mathcal{O}_L$, in other words,
  \begin{align*}
   \mathfrak{J}=\{\alpha\in\mathcal{O}_L\mid\alpha\mathcal{O}_L\subset\mathcal{O}_K[\theta]\}.
  \end{align*}
  Under this condition, let $\mathfrak{p}\subset\mathcal{O}_K$ be a prime ideal which is relatively prime to $\mathfrak{J}$ (i.e., $\mathfrak{p}\mathcal{O}_L+\mathfrak{J}=\mathcal{O}_L$).
  Denote the minimal polynomial of $\theta$ over $\mathcal{O}_K$ by $p(X)\in\mathcal{O}_K[X]$ and denote the image of $p(X)$ via the map $\mathcal{O}_K[X]\rightarrow(\mathcal{O}_K/\mathfrak{p})[X]$ by $\overline{p}(X)$.
  $\overline{p}(X)$ is decomposed as 
  \begin{align*}
   \overline{p}(X)=\overline{p_1}(X)^{e_1}\cdots\overline{p_g}(X)^{e_g}
  \end{align*}
  in $(\mathcal{O}_K/\mathfrak{p})[X]$ for some $p_i(X)\in\mathcal{O}_K[X]$.
  Then, by defining
  \begin{align*}
   \mathfrak{P}_i:=\mathfrak{p}\mathcal{O}_L+p_i(\theta)\mathcal{O}_L,
  \end{align*}
  $\mathfrak{P}_i\subset\mathcal{O}_L$ are all different prime ideals over $\mathfrak{p}$ and
  \begin{align*}
   \mathfrak{p}\mathcal{O}_L=\mathfrak{P}_1^{e_1}\cdots\mathfrak{P}_g^{e_g}
  \end{align*}
  holds and this is the prime ideal factorization of $\mathfrak{p}$ in $L$.
  Also, $f_i=[\mathcal{O}_L/\mathfrak{P}_i:\mathcal{O}_K/\mathfrak{p}]=\mathrm{deg}(p_i(X))$ holds.
 \end{remark}
 Let $L/K$ be an extension of number fields with $[L:K]=n$ again.
 Let $\{\sigma_i\}_{1\leq i\leq n}$ be the set of all $K$-embeddings $L\hookrightarrow\mathbb{C}$.\par
 For $a_1,\ldots,a_n\in L$, the discriminant over $L/K$ is defined as
 \begin{align*}
  \Delta_{L/K}(a_1,\ldots,a_n):=\mathrm{det}\begin{pmatrix}
                                             \sigma_1(a_1) & \sigma_1(a_2) & \dots  & \sigma_1(a_n) \\
                                             \sigma_2(a_1) & \sigma_2(a_2) & \dots  & \sigma_2(a_n) \\
                                             \vdots        & \vdots        & \ddots & \vdots \\
                                             \sigma_n(a_1) & \sigma_n(a_2) & \dots  & \sigma_n(a_n)
                                            \end{pmatrix}^2.
 \end{align*}
 Then, $\Delta_{L/K}(a_1,\ldots,a_n)\in K$ and $\Delta_{L/K}(a_1,\ldots,a_n)\neq0$ if and only if $a_1,\ldots,a_n\in L$ is linearly independent over $K$.
 Now, the next theorem holds.
 \begin{lemma}[{cf.\ \cite[Chapter I, Lemma 2.9]{Neu99}}]\label{discriminant}
  Let $L/K$ be an extension of number fields with $[L:K]=n$ and $a_1,\ldots,a_n\in\mathcal{O}_L$ be a basis of $L/K$.
  Define $\delta:=\Delta_{L/K}(a_1,\ldots,a_n)$ and then
  \begin{align*}
   \mathcal{O}_L\subset\frac{a_1}{\delta}\mathcal{O}_K\oplus\cdots\oplus\frac{a_n}{\delta}\mathcal{O}_K.
  \end{align*}
 \end{lemma}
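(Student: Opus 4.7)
The plan is to prove the lemma by expanding an arbitrary $\alpha \in \mathcal{O}_L$ in the $K$-basis $a_1,\dots,a_n$ and showing that each coefficient, multiplied by $\delta$, lands in $\mathcal{O}_K$; the mechanism is Cramer's rule applied to the system obtained from the $n$ embeddings, together with the identity $\det(M)^2=\delta$ where $M=(\sigma_i(a_j))_{i,j}$.

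First, I would fix $\alpha \in \mathcal{O}_L$ and write $\alpha=\sum_{j=1}^n x_j a_j$ with $x_j \in K$, which is possible (and unique) since $a_1,\dots,a_n$ is a $K$-basis of $L$ (note that $\delta \neq 0$ guarantees linear independence, as stated above the lemma). Applying each $K$-embedding $\sigma_i:L\hookrightarrow\mathbb{C}$ and using $\sigma_i(x_j)=x_j$ (as $x_j\in K$ and the $\sigma_i$ fix $K$), I obtain the linear system
\begin{equation*}
 \sigma_i(\alpha)=\sum_{j=1}^n x_j\,\sigma_i(a_j),\qquad i=1,\dots,n.
\end{equation*}
By Cramer's rule, $x_j=\det(M_j)/\det(M)$, where $M_j$ is the matrix obtained from $M$ by replacing its $j$-th column with $(\sigma_1(\alpha),\dots,\sigma_n(\alpha))^{\mathrm{T}}$. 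Multiplying numerator and denominator by $\det(M)$ and using $\det(M)^2=\delta$, this becomes
\begin{equation*}
 \delta\, x_j=\det(M)\cdot\det(M_j).
\end{equation*}

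The main work is to verify $\det(M)\cdot\det(M_j)\in\mathcal{O}_K$. Since $\alpha,a_1,\dots,a_n\in\mathcal{O}_L$ and each $\sigma_i$ sends algebraic integers to algebraic integers in some fixed Galois closure $\widetilde{L}/K$, every entry of $M$ and of $M_j$ is an algebraic integer, so the product of determinants is an algebraic integer. It remains to show it lies in $K$. For this, I would let $\tau$ be any element of $\mathrm{Gal}(\widetilde{L}/K)$; then $\tau$ acts on the indexing set $\{\sigma_1,\dots,\sigma_n\}$ by some permutation $\pi$, because composition with $\tau$ permutes the $K$-embeddings of $L$ into $\widetilde{L}$. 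Consequently, applying $\tau$ to $M$ permutes its rows by $\pi$, so $\tau\bigl(\det(M)\bigr)=\mathrm{sgn}(\pi)\det(M)$, and likewise $\tau\bigl(\det(M_j)\bigr)=\mathrm{sgn}(\pi)\det(M_j)$. The product is therefore $\tau$-invariant, hence lies in the fixed field $K$, and combined with integrality we get $\det(M)\cdot\det(M_j)\in \mathcal{O}_K$.

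Putting this together, $\delta x_j\in\mathcal{O}_K$ for each $j$, so
\begin{equation*}
 \alpha=\sum_{j=1}^n x_j\,a_j=\sum_{j=1}^n(\delta x_j)\cdot\frac{a_j}{\delta}\in\frac{a_1}{\delta}\mathcal{O}_K\oplus\cdots\oplus\frac{a_n}{\delta}\mathcal{O}_K,
\end{equation*}
which is exactly the desired inclusion. The only subtle step is the Galois-invariance argument for $\det(M)\cdot\det(M_j)$; everything else is a direct application of Cramer's rule and the definition of $\delta$.
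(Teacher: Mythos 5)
Your proof is correct and follows the standard argument given in the cited reference (Neukirch, Chapter I, Lemma 2.9): expand $\alpha\in\mathcal{O}_L$ in the $K$-basis, apply Cramer's rule to the system obtained from the $n$ embeddings, rewrite $\delta x_j=\det(M)\det(M_j)$, and observe that this product is an algebraic integer fixed by $\mathrm{Gal}(\widetilde L/K)$ (the sign $\mathrm{sgn}(\pi)$ cancels in the product), hence lies in $\mathcal{O}_K$. The paper itself does not reprove this lemma but simply cites Neukirch, and your argument is exactly that one.
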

 \begin{remark}\label{conductor and discriminant}
  The conductor defined in Remark \ref{construction of prime ideal factorization} contains $\delta=\Delta_{L/K}(1,\theta,\ldots,\theta^{n-1})$, where $n=[L:K]$, by Lemma \ref{discriminant}.
 \end{remark}
 \begin{definition}[{cf.\ \cite[Definition 3.2.1, Definition 8.4.1]{Voi21}}]
  Let $B$ be a finite-dimensional  algebra over $\mathbb{Q}$.
  An anti-involution is a $\mathbb{Q}$-linear map $\phi:B\rightarrow B$ such that
  \begin{enumerate}
   \item $\phi(1)=1$
   \item $\phi(\phi(x))=x$\quad($x\in B$)
   \item $\phi(xy)=\phi(y)\phi(x)$\quad($x,y\in B$)
  \end{enumerate}
  An anti-involution $\phi:B\rightarrow B$ is called positive anti-involution over $\mathbb{Q}$ if and only if $\mathrm{Tr}_{B/\mathbb{Q}}(\phi(x)x)>0$ for all $x\in B$.
 \end{definition}
 Also, in this paper, we adopt the next definition of Salem numbers.
 \begin{definition}[{\cite[Chapter 5.2]{BDGPS92}}]
  A Salem number is a real algebraic integer $\lambda$ greater than $1$ whose other conjugates have modulus at most equal to $1$, at least one having a modulus equal to $1$.
 \end{definition}
 This implies that all Salem numbers have degrees at least $4$.

\section{Endomorphism algebras of simple abelian varieties}\label{Endomorphism algebras of simple abelian varieties}
 Let $X$ be a $g$-dimensional simple abelian variety and then its endomorphism algebra $B:=\mathrm{End}_{\mathbb{Q}}(X)=\mathrm{End}(X)\otimes_\mathbb{Z}\mathbb{Q}$ is a division ring.
 $B$ is a finitely generated algebra over $\mathbb{Q}$ with a center $K$, which is a field.
 Also, $B$ admits the Rosati involution and it is a positive anti-involution $\phi:B\rightarrow B$ over $\mathbb{Q}$.
 Now $\phi$ can be restricted to $K$ and define $K_0:=\{x\in K\mid \phi(x)=x\}$.
 Then $K_0$ is a totally real number field and $[K:K_0]$ is $1$ or $2$.
 This deduction gives the classification of simple abelian varieties by their endomorphism algebras as below (cf.\ \cite[Chapter 5.5]{BL04}).
 Denote $[B:K]=d^2$, $[K:\mathbb{Q}]=e$ and $[K_0:\mathbb{Q}]=e_0$.
 \begin{table}[h]
  \caption{Classification of $X$ by its endomorphism algebra}
  \label{table1}
  \begin{tabular}{|c|c|c|c|c|c|}
   \hline
   $X$ & $B=\mathrm{End}_\mathbb{Q}(X)$ & $K$ & $d$ & $e_0$ & restriction \\
   \hline
   Type 1 & $K$                                             & totally real & $1$ & $e$           & $e\mid g$\\
   Type 2 & totally indefinite quaternion algebra over $K$  & totally real & $2$ & $e$           & $2e\mid g$\\
   Type 3 & totally definite quaternion algebra over $K$    & totally real & $2$ & $e$           & $2e\mid g$\\
   Type 4 & division ring with center $K$                   & CM-field     & $d$ & $\frac{e}{2}$ & $\frac{d^2 e}{2}\mid g$\\
   \hline
  \end{tabular}
 \end{table}\par
 An endomorphism $f:X\rightarrow X$ can be identified as an element $\alpha\in B$ with the minimal polynomial $p(x)\in\mathcal{O}_K[x]$ over $K$ and the minimal polynomial $P(x)\in\mathbb{Z}[x]$ over $\mathbb{Q}$.
 If $f$ is an automorphism, the constant term of $p(x)$ is in $U_K$ and the constant term of $P(x)$ is in $\{\pm1\}$.
 The degree of $p(x)$ is at most $d$ and the degree of $P(x)$ is at most $de\leq 2g$.
 Also, the first dynamical degree of the endomorphism $f$ is the square of the maximal absolute value of the roots of $P(x)\in\mathbb{Z}[x]$ (cf.\ \cite[Section 3]{Sug23}).\par
 Conversely, for a division algebra in Table \ref{table1} with some property, we have a simple abelian variety whose endomorphism algebra is the same with the division algebra by the following propositions (cf.\ \cite[Chapter 9]{BL04}).
 The same methods are also used in \cite{Sug23}.
 \begin{proposition}[Type 1]\label{construction for Type 1}
  Let $K$ be a totally real number field with $[K:\mathbb{Q}]=e$ and fix a $\mathbb{Z}$-order $\mathcal{O}$.
  Then, for any ineger $m\in\mathbb{Z}_{>0}$, there exists an $em$-dimensional simple abelian variety $X$ with an isomorphism $K\stackrel{\simeq}{\longrightarrow}\mathrm{End}_\mathbb{Q}(X)$ which induces an injective ring homomorphism $\mathcal{O}\hookrightarrow\mathrm{End}(X)$.
 \end{proposition}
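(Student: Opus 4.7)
The plan is to follow the moduli-theoretic construction of abelian varieties with prescribed real multiplication, as in \cite[Chapter 9]{BL04}. Let $\sigma_1,\ldots,\sigma_e\colon K\hookrightarrow\mathbb{R}$ be the distinct embeddings, and set $V_{\mathbb{R}}:=K_{\mathbb{R}}^{2m}$, where $K_{\mathbb{R}}:=K\otimes_{\mathbb{Q}}\mathbb{R}\cong\mathbb{R}^{e}$ acts componentwise through the $\sigma_{i}$. The lattice $\Lambda:=\mathcal{O}^{2m}\subset V_{\mathbb{R}}$ has real rank $2em$ and is $\mathcal{O}$-stable by construction; this is the skeleton on which the abelian variety will be built.

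Next I would introduce a complex structure $J$ on $V_{\mathbb{R}}$ commuting with the $K_{\mathbb{R}}$-action. Since $K$ is totally real, the centralizer of $K_{\mathbb{R}}$ in $\mathrm{End}_{\mathbb{R}}(V_{\mathbb{R}})$ is isomorphic to $\prod_{i=1}^{e}M_{2m}(\mathbb{R})$, so $J$ decomposes as a tuple $(J_{1},\ldots,J_{e})$ with $J_{i}^{2}=-I$. The complex torus $X_{J}:=(V_{\mathbb{R}},J)/\Lambda$ has complex dimension $em$ and carries a natural injection $\mathcal{O}\hookrightarrow\mathrm{End}(X_{J})$. To upgrade $X_{J}$ to an abelian variety, fix a totally positive element $\xi\in K$, set $E(x,y):=\mathrm{Tr}_{K/\mathbb{Q}}(\xi\,\omega(x,y))$ where $\omega$ is the standard symplectic form on $K^{2m}$, and restrict to those $J$ for which $E(\cdot,J\cdot)$ is positive definite (equivalently, each $J_{i}$ lies in a Siegel upper half space $\mathfrak{H}_{m}$). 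This produces an integral polarization on $X_{J}$.

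The main obstacle, as expected, is proving that $\mathrm{End}_{\mathbb{Q}}(X_{J})$ equals $K$ exactly (rather than something strictly larger) and that $X_{J}$ is simple. The admissible $J$'s form an open subset of $\mathfrak{H}_{m}^{e}$, parametrising a family of polarized abelian varieties with $\mathcal{O}$-action. Inside this parameter space, the locus on which $\mathrm{End}_{\mathbb{Q}}$ strictly contains $K$ is a countable union of proper analytic subvarieties, because any extra endomorphism imposes a nontrivial Hodge-theoretic condition on the tuple $(J_{1},\ldots,J_{e})$. Choosing a very general $J$ in the complement gives $\mathrm{End}_{\mathbb{Q}}(X_{J})=K$; since $K$ is a field and admits no nontrivial idempotents, $X_{J}$ cannot split as a product of abelian subvarieties and is therefore simple. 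This yields the required $em$-dimensional simple abelian variety $X$ together with the isomorphism $K\xrightarrow{\sim}\mathrm{End}_{\mathbb{Q}}(X)$ restricting to an injection $\mathcal{O}\hookrightarrow\mathrm{End}(X)$.
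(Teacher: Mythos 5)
Your proof is correct and follows essentially the same route as the paper's cited source: the proposition is quoted from \cite[Chapter 9]{BL04}, where the construction is exactly the moduli-theoretic one you outline (parametrize $\mathcal{O}$-linear polarized tori over a product of Siegel upper half-spaces, note that the locus with extra endomorphisms is a countable union of proper analytic subsets, and take a very general complex structure to force $\mathrm{End}_{\mathbb{Q}}(X)=K$, which gives simplicity since $K$ is a field and hence has no nontrivial idempotents). The only detail to double-check is that $\xi$ should be chosen so that the Riemann form $E(x,y)=\mathrm{Tr}_{K/\mathbb{Q}}(\xi\,\omega(x,y))$ is integer-valued on $\Lambda=\mathcal{O}^{2m}$ (e.g.\ take $\xi$ in the inverse different, or clear denominators by an integer scaling), but this is a routine normalization carried out in the reference.
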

 \begin{proposition}[Type 2]\label{construction for Type 2}
  Let $B$ be a totally indefinite quaternion algebra over a totally real number field $K$ with $[K:\mathbb{Q}]=e$ and fix a $\mathbb{Z}$-order $\mathcal{O}$.
  Assume there exists a positive anti-involution $\phi:B\rightarrow B$ over $\mathbb{Q}$ and fix a positive integer $m\in\mathbb{Z}_{>0}$.
  Then there exists a $2em$-dimensional simple abelian variety $X$ with an isomorphism $B\stackrel{\simeq}{\longrightarrow}\mathrm{End}_\mathbb{Q}(X)$ which induces an injective ring homomorphism $\mathcal{O}\hookrightarrow\mathrm{End}(X)$.
 \end{proposition}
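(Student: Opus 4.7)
The plan is to follow the Shimura-type construction of polarized abelian varieties with prescribed endomorphism structure, as laid out in \cite[Chapter 9]{BL04}; this parallels the Type 1 case of Proposition \ref{construction for Type 1}. The goal is to produce a polarized complex torus $X=V/\Lambda$ of complex dimension $2em$ admitting a left $\mathcal{O}$-action whose induced Rosati involution on $B$ coincides with $\phi$, and then to arrange the choices so that generically $\mathrm{End}_\mathbb{Q}(X)=B$, which forces $X$ to be simple.

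I would first set up the complex vector space and lattice. Since $B$ is totally indefinite, $B\otimes_\mathbb{Q}\mathbb{R}\cong M_2(\mathbb{R})^e$. Taking $V:=(B\otimes_\mathbb{Q}\mathbb{R})^{m}$ viewed as a left $B$-module and choosing, in each of the $e$ simple factors, a real $2\times 2$ matrix $j_i$ with $j_i^2=-I$, right multiplication by the tuple $(j_1,\ldots,j_e)$ defines a complex structure $J$ on $V$ commuting with the left $B$-action; then $\dim_\mathbb{C} V=2em$. Since $\mathcal{O}\subset B$ is a finitely generated $\mathbb{Z}$-module of rank $4e$, the subset $\Lambda:=\mathcal{O}^{\oplus m}\subset V$ is a lattice of real rank $4em$ stable under $\mathcal{O}$, so $X_0:=V/\Lambda$ is a complex torus equipped with an embedding $\mathcal{O}\hookrightarrow\mathrm{End}(X_0)$.

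Next I would build a polarization out of $\phi$. The standard recipe is to find $t\in B^\times$ with $\phi(t)=-t$ so that the alternating $\mathbb{Q}$-bilinear form
\begin{align*}
 E(x,y):=\sum_{k=1}^m\mathrm{Tr}_{B/\mathbb{Q}}\bigl(\mathrm{trd}(t\,x_k\phi(y_k))\bigr)
\end{align*}
is non-degenerate on $V$ and $\mathbb{Z}$-valued on $\Lambda$ after rescaling $t$ by a positive integer. Positivity of $\phi$ together with the indefiniteness of $B$ leaves enough freedom in each $j_i$ to ensure the Riemann relations $E(Jx,Jy)=E(x,y)$ and $E(x,Jx)>0$ for $x\neq 0$, which upgrades $X_0$ to an abelian variety whose Rosati involution restricts to $\phi$ on $B$.

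The main obstacle, as in all such constructions, is showing that a generic choice of $(j_1,\ldots,j_e)$ yields the equality $\mathrm{End}_\mathbb{Q}(X)=B$ rather than a strictly larger algebra; simplicity then follows automatically, because a proper abelian subvariety would produce a non-trivial idempotent and $B$ is a division algebra. Each hypothetical endomorphism outside $B$ would cut out a proper real analytic condition on the parameter $(j_1,\ldots,j_e)$, whose moduli is a product of copies of the upper half plane; a countable union of such proper subsets still leaves a dense locus, so a very general choice produces the desired $X$. The substantive verifications, namely that $E$ is indeed a Riemann form, that its Rosati involution restricts to $\phi$, and that the locus where $\mathrm{End}_\mathbb{Q}(X)\supsetneq B$ is meagre, are the technical heart of the argument and are carried out in detail in \cite[Chapter 9]{BL04}.
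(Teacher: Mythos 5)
Your proposal is correct and coincides with the route the paper itself takes: Proposition \ref{construction for Type 2} is stated in the paper without a proof, as a direct citation to \cite[Chapter 9]{BL04}, and your sketch is precisely an outline of that Shimura-type moduli construction (complex structure $J$ commuting with the left $B$-action, Riemann form built from a skew element $t$ with $\phi(t)=-t$, and a genericity/countability argument to pin down $\mathrm{End}_\mathbb{Q}(X)=B$, after which simplicity follows since $B$ is a division ring). Two small slips that do not affect the argument: the trace form should read something like $\mathrm{Tr}_{K/\mathbb{Q}}(\mathrm{trd}_{B/K}(t\,x_k\phi(y_k)))$ rather than applying $\mathrm{Tr}_{B/\mathbb{Q}}$ to an element already in $K$, and for $m>1$ the relevant period domain is a product of copies of the Siegel upper half space $\mathbb{H}_m$ rather than of the upper half plane.
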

 \begin{proposition}[Type 4]\label{construction for Type 4}
  Let $B$ be a central simple division algebra over a CM-field $K$ with $[B:K]=d^2$, $[K:\mathbb{Q}]=e=2e_0$ and fix a $\mathbb{Z}$-order $\mathcal{O}$.
  Assume there exists\nolinebreak\ a positive anti-involution $\phi:B\rightarrow B$ over $\mathbb{Q}$ of the second kind (i.e., $\phi\lvert_K\neq\id_K$) and fix a positive integer $m\in\mathbb{Z}_{>0}$.
  Then there exists a $d^2e_0m$-dimensional abelian variety $X$ with an injective ring homomorphism $B\hookrightarrow\mathrm{End}_\mathbb{Q}(X)$ which induces an injective ring homomorphism $\mathcal{O}\hookrightarrow\mathrm{End}(X)$.\par
  In addition, assume one of the next conditions.
  \begin{enumerate}
   \item $dm\geq3$
   \item $dm=2$ and $e_0\geq2$
  \end{enumerate}
  Then the above $X$ can be taken as a simple abelian variety and $B\simeq\mathrm{End}_\mathbb{Q}(X)$.
 \end{proposition}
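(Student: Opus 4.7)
The plan is to build $X$ directly as a complex torus $V/\Lambda$ carrying the prescribed $B$-action, following the strategy for Type~4 varieties in \cite[Chapter 9]{BL04}. First I would fix the $\mathbb{Z}$-lattice $\Lambda:=\mathcal{O}^m$ sitting inside the real vector space $V_{\mathbb{R}}:=B^m\otimes_{\mathbb{Q}}\mathbb{R}$. Over $\mathbb{R}$ the algebra $B$ decomposes as $\prod_{\sigma}B\otimes_{K_0,\sigma}\mathbb{R}$, where $\sigma$ runs over the real embeddings of $K_0$, and each factor is isomorphic to $M_d(\mathbb{C})$ because $K\otimes_{K_0,\sigma}\mathbb{R}\simeq\mathbb{C}$. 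Splitting $V_{\mathbb{R}}$ through the $K$-action yields eigenspaces indexed by the complex embeddings of $K$; a CM-type $\Phi$, i.e.\ a choice of one embedding from each complex-conjugate pair, selects a half-dimensional subspace on which the complex structure $J$ acts as multiplication by $i$. The resulting complex dimension is $d^{2} e_{0} m$, and left multiplication by $B$ is $J$-linear by construction.

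Next I would produce the polarization. Since $\phi$ is positive and of the second kind, a standard criterion furnishes an element $\eta\in B$ with $\phi(\eta)=-\eta$ together with a compatible CM-type $\Phi$ so that the $\mathbb{Q}$-bilinear form
\begin{align*}
E(x,y):=\mathrm{Tr}_{B/\mathbb{Q}}\bigl(\eta\,\phi(x)\,y\bigr)
\end{align*}
on $B$, extended diagonally to $B^{m}$ via a Hermitian pairing, satisfies the Riemann relations with respect to $J$: $E$ is alternating (using that $\phi(\eta)=-\eta$ and the $\phi$-invariance of the trace), takes integer values on $\mathcal{O}^{m}$, is $J$-invariant (because $\phi|_{K}$ is the complex conjugation of $K/K_{0}$), and satisfies $E(x,Jx)>0$ for $x\neq 0$ by positivity of $\phi$. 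This makes $X:=V/\Lambda$ a polarized abelian variety, and right multiplication by $\mathcal{O}$ on $\mathcal{O}^{m}$ realizes the injection $\mathcal{O}\hookrightarrow\mathrm{End}(X)$, which rationally upgrades to $B\hookrightarrow\mathrm{End}_{\mathbb{Q}}(X)$.

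For the simplicity clause and the equality $B\simeq\mathrm{End}_{\mathbb{Q}}(X)$ under the numerical hypotheses, I would invoke Albert's classification. The triple $(d,e_{0},m)$ together with the existence of a positive anti-involution of the second kind corresponds to an entry of Albert's list, and the conditions $dm\geq 3$ or $(dm,e_{0})=(2,e_{0})$ with $e_{0}\geq 2$ are exactly those under which such a triple is realized by a \emph{simple} abelian variety with endomorphism algebra $B$. The polarization datum $(\Phi,\eta)$ varies in a positive-dimensional family, and I would argue that the sublocus where $\mathrm{End}_{\mathbb{Q}}(X)$ strictly contains $B$ is a proper analytic subvariety; a generic choice then produces $X$ with $\mathrm{End}_{\mathbb{Q}}(X)=B$, and simplicity follows because any isogeny decomposition would split the division algebra $B$ as a product.

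The main obstacle I expect is making this genericity argument rigorous, especially in the borderline case $dm=2$ with $e_{0}\geq 2$, where the dimension of the moduli of $(B,\phi)$-polarized tori barely exceeds that of the jumping locus. The excluded configurations $dm=1$ and $(dm,e_{0})=(2,1)$ are precisely those where the Shimura datum attached to $(B,\phi,\Phi)$ is forced to decompose and no torus of the predicted dimension can realize $B$ as its full endomorphism algebra; tracking when the Mumford--Tate group drops is the substantive computation behind \cite[Chapter 9]{BL04}.
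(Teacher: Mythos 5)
The paper does not prove this proposition; it simply cites \cite[Chapter 9]{BL04} for it, so there is no in-paper argument to compare against. Your sketch is a reasonable high-level reconstruction of the Birkenhake--Lange construction: set $\Lambda=\mathcal{O}^m\subset V_{\mathbb{R}}=(B\otimes_{\mathbb{Q}}\mathbb{R})^m$, use the decomposition $B\otimes_{\mathbb{Q}}\mathbb{R}\simeq\prod_{\sigma}M_d(\mathbb{C})$ to impose a complex structure selecting a CM-type, build $E(x,y)=\mathrm{Tr}_{B/\mathbb{Q}}(\eta\,\phi(x)\,y)$, and then pass to genericity for the simplicity and $\mathrm{End}_{\mathbb{Q}}(X)\simeq B$ statements. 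The dimension count $\dim_{\mathbb{C}}V=d^{2}e_{0}m$ is correct, and you correctly identify that the substantive issue is the genericity step.

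A few points where the sketch is vaguer than what BL04 actually does. First, $\eta$ and the CM-type $\Phi$ cannot be chosen independently and then ``made compatible'' after the fact: the positivity of $\phi$ and the second-kind hypothesis are exactly what guarantee, via \cite[Theorem 9.6.1]{BL04} and the surrounding lemmas, that a skew element $\eta$ and a suitable complex structure exist for which the three Riemann relations hold simultaneously; this is where the CM hypothesis on $K$ and the positivity are really consumed, and it is not a ``standard criterion'' independent of those hypotheses. Second, the genericity argument in BL04 is not a qualitative remark about a jumping locus being a proper analytic subvariety; it is a dimension computation of the parameter space (a product of Hermitian symmetric domains), and the stated inequalities $dm\geq 3$ or ($dm=2$ and $e_{0}\geq 2$) are precisely the numerical conditions under which that parameter space has positive dimension, so that the generic member has $\mathrm{End}_{\mathbb{Q}}(X)=B$ and is simple. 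Your remark about the Mumford--Tate group dropping in the excluded cases is the right intuition, but BL04 argues directly via dimensions rather than via Shimura data. Since these are exactly the steps you flagged as the expected obstacle, the proposal is a correct plan with the real work still to be done, which is in any case already carried out in the cited reference.
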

 \begin{remark}
  The construction of simple abelian varieties of Type 3 is also written in \cite[Chapter 9]{BL04}, but it is not used in this paper, so we omit here.
 \end{remark}
 The construction of a division ring is due to the next theorems.
 \begin{theorem}[{\cite[Theorem 2.5]{DH22}}]\label{divisional}
  Let $F$ be a totally real number field and $K=F(\sqrt{a})$ a quadratic extension for $a\in\mathcal{O}_F$.
  Then there exists a prime number $p$ such that the quaternion algebra $B=\left(\frac{a,p}{F}\right)$ is divisional.
 \end{theorem}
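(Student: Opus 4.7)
The plan is to show that the quaternion algebra $B=(a,p/F)$ fails to split at some finite place of $F$, and then invoke the Albert--Brauer--Hasse--Noether local-global principle (a quaternion algebra over a number field is a division algebra if and only if it is ramified at some place). Concretely, I will produce a rational prime $p$ and a prime ideal $\mathfrak{p}\subset\mathcal{O}_F$ above $p$ at which the Hilbert symbol $(a,p)_{\mathfrak{p}}$ equals $-1$.

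First, I apply Chebotarev's density theorem to the quadratic extension $K/F=F(\sqrt{a})/F$. Since $[K:F]=2$, the set of prime ideals $\mathfrak{p}\subset\mathcal{O}_F$ that are inert in $K/F$ has density $1/2$, hence is infinite. Discarding the finitely many exceptions, I may choose such a $\mathfrak{p}$ that (i) is coprime to $a$, and (ii) lies above a rational prime which is unramified in $F/\mathbb{Q}$. Let $p\in\mathbb{Z}$ be the rational prime beneath this $\mathfrak{p}$.

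Next, I compute $(a,p)_{\mathfrak{p}}$. By the choice of $\mathfrak{p}$, the local extension $K_{\mathfrak{p}}=F_{\mathfrak{p}}(\sqrt{a})$ is the unramified quadratic extension of $F_{\mathfrak{p}}$, and $a\in\mathcal{O}_{F_{\mathfrak{p}}}^{\times}$. For an unramified quadratic extension of local fields one has the standard description of the norm group,
\[
 N_{K_{\mathfrak{p}}/F_{\mathfrak{p}}}(K_{\mathfrak{p}}^{\times})=\mathcal{O}_{F_{\mathfrak{p}}}^{\times}\cdot \pi^{2\mathbb{Z}},
\]
where $\pi$ is a uniformizer of $F_{\mathfrak{p}}$. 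Since $p$ is unramified in $F$ at $\mathfrak{p}$, we have $v_{\mathfrak{p}}(p)=1$, which is odd; hence $p\notin N_{K_{\mathfrak{p}}/F_{\mathfrak{p}}}(K_{\mathfrak{p}}^{\times})$ and $(a,p)_{\mathfrak{p}}=-1$. Thus $B\otimes_{F}F_{\mathfrak{p}}$ is a division algebra, so $B$ itself cannot be the matrix algebra $M_{2}(F)$, i.e., $B$ is a division algebra by Hasse--Brauer--Noether.

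The only real subtlety is the Hilbert symbol computation at primes of residue characteristic $2$, where wild ramification would normally complicate norm-group calculations; but here $K_{\mathfrak{p}}/F_{\mathfrak{p}}$ is \emph{unramified} by construction, which reduces the computation to the uniform statement above and avoids all such trouble. A secondary point to watch is that one must guarantee nontrivial inert primes exist; this is automatic from Chebotarev, but one could equally well argue elementarily, producing $\mathfrak{p}$ with $\bar a\in(\mathcal{O}_F/\mathfrak{p})^{\times}$ a non-square by a direct density argument in $\mathcal{O}_F$. Either way, the construction yields the required prime $p$.
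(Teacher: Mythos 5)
The paper does not prove this statement itself; it simply imports it by citation from \cite[Theorem~2.5]{DH22}, so there is no in-paper proof to compare against. Your argument is a correct, self-contained proof via the standard local-global machinery: you locate an inert prime $\mathfrak{p}$ of $K/F$ that is coprime to $a$ and lies above a rational prime $p$ unramified in $F/\mathbb{Q}$ (Chebotarev guarantees a positive density of such $\mathfrak{p}$, so finitely many exclusions are harmless), and then the computation $(a,p)_{\mathfrak{p}}=-1$ follows because $K_{\mathfrak{p}}/F_{\mathfrak{p}}$ is the unramified quadratic extension, its norm group is $\mathcal{O}_{F_{\mathfrak{p}}}^{\times}\cdot\pi^{2\mathbb{Z}}$, and $v_{\mathfrak{p}}(p)=1$ is odd; nonsplitting at one place forces $B$ to be a division algebra by Albert--Brauer--Hasse--Noether. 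The one small point worth flagging is that you never actually need to worry about residue characteristic $2$: you could just as well restrict to inert $\mathfrak{p}$ of odd residue characteristic coprime to $2a\cdot\mathrm{disc}(F/\mathbb{Q})$, which is still an infinite set and lets you use the tame Hilbert-symbol formula $(a,\pi)_{\mathfrak{p}}=\left(\tfrac{a}{\mathfrak{p}}\right)$ directly; your observation that the unramified norm description handles the $p=2$ case uniformly is also valid, just not necessary. This is precisely the argument one expects in the cited source, so I would regard your proposal as a faithful reconstruction rather than a genuinely different route.
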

 \begin{theorem}[{cf.\ \cite[Chapter 15.1, Corollary d]{Pie82}}]\label{construct division algebra}
  Let $E/F$ be a cyclic extension of fields with $[E:F]=n$.
  Define $G=\mathrm{Gal}(E/F)$ and denote its generator by $\sigma$.
  Fix $u\in F^{\times}$ and take a symbol $v$ as $v^n=u$.
  Define $B=\oplus_{i=0}^{n-1} v^{i}E$ with the multiplication on $B$ as $e\cdot v=v\cdot\sigma(e)$ for $e\in E$.
  Assume that the order of $u$ in $F^{\times}/\mathrm{N}_{E/F}(E^{\times})$ is exactly $n$.\par
  Then $B$ is a division ring and also a central simple algebra over $F$ with $[B:F]=n^2$.
 \end{theorem}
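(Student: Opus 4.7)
The plan is to establish four claims in succession: $[B:F] = n^2$, the center of $B$ equals $F$, $B$ is a simple ring, and under the order hypothesis on $u$ the ring $B$ is a division algebra. The dimension statement is immediate from the direct-sum decomposition $B = \bigoplus_{i=0}^{n-1} v^i E$ together with $[E:F] = n$, and associativity of the extended multiplication reduces to checking compatibility on generators, which works because $\sigma$ is a ring automorphism and $v^n = u$ lies in $F$.

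For the center, I would take a central element $z = \sum_{i=0}^{n-1} v^i e_i$ and, using $x v^i = v^i \sigma^i(x)$ together with commutativity of $E$, expand $zx - xz = \sum_i v^i (x - \sigma^i(x)) e_i$ for arbitrary $x \in E$. Since $\sigma^i \neq \mathrm{id}_E$ for $0 < i < n$, each $e_i$ with $i \neq 0$ must vanish, so $z = e_0 \in E$, and then $zv = vz$ yields $\sigma(e_0) = e_0$, i.e., $z \in F$. For simplicity, I would pick a nonzero two-sided ideal $I$ and a nonzero $\alpha = \sum_{i \in S} v^i e_i \in I$ with $|S|$ minimal; fix any $i_0 \in S$ and form $x\alpha - \alpha\sigma^{i_0}(x) = \sum_{i \in S \setminus \{i_0\}} v^i (\sigma^i(x) - \sigma^{i_0}(x)) e_i \in I$, which has strictly smaller support and is nonzero for some $x \in E$ (otherwise $\sigma^i = \sigma^{i_0}$ on $E$ for some $i \neq i_0$ in $S$, absurd). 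This contradicts minimality unless $|S| = 1$, in which case $\alpha = v^{i_0} e_{i_0}$ is a unit (each $v^i$ is invertible because $v^n = u \in F^\times$ and $e_{i_0} \in E^\times$), so $I = B$.

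The heart of the argument, and the main obstacle, is showing that $B$ is a division ring under the hypothesis that $[u]$ has order exactly $n$ in $F^\times / \mathrm{N}_{E/F}(E^\times)$. My approach would go through the Brauer group: $B$ is central simple over $F$ of dimension $n^2$, so by Wedderburn $B \cong M_k(D)$ for some central division $F$-algebra $D$ with $[D:F] = (n/k)^2$, and the goal is to force $k = 1$. The key classical input is the canonical isomorphism $\mathrm{Br}(E/F) \cong F^\times / \mathrm{N}_{E/F}(E^\times)$ for the cyclic extension $E/F$, under which the class of the cyclic algebra $(E/F, \sigma, u) = B$ corresponds to $[u]$; hence the exponent of $B$ in $\mathrm{Br}(F)$ equals the order of $[u]$, which is $n$ by hypothesis. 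Since $E$ splits $B$ and $[E:F] = n$, the index of $B$ divides $n$; since the exponent always divides the index, the index is at least $n$. Combining, the index equals $n$, hence $n/k = n$ and $k = 1$, so $B = D$ is a division algebra. A more elementary alternative, closer to Pierce's exposition, would be to argue contrapositively: if $B$ were not a division ring, a careful analysis of the regular representation $B \hookrightarrow M_n(E)$ (in which $v$ acts via a companion-type matrix with $u$ in the corner) would express some power $u^m$ with $0 < m < n$ as $\mathrm{N}_{E/F}$ of an element of $E^\times$, contradicting the order hypothesis on $[u]$.
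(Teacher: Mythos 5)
The paper does not prove this statement; it is cited directly from Pierce \cite[Chapter 15.1, Corollary d]{Pie82}, so there is no in-paper proof to compare against. Your argument is correct: the dimension count, the computation $Z(B)=F$, and the minimal-support simplicity argument are all standard and sound, and the Brauer-group step (the class of the cyclic algebra $(E/F,\sigma,u)$ corresponds to $[u]$ under $\mathrm{Br}(E/F)\cong F^\times/\mathrm{N}_{E/F}(E^\times)$, so its exponent is $n$; since $E$ is a degree-$n$ splitting field the index divides $n$, and exponent divides index, forcing index $=n$ and hence $B=D$) is exactly the classical route, matching Pierce's treatment up to the cohomological packaging. Your suggested elementary alternative via the regular representation is also viable but unnecessary given the completeness of the main argument.
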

 The next lemma is used for searching $u\in F^{\times}$ with $u\notin\mathrm{N}_{E/F}(E^{\times})$.
 \begin{lemma}[{\cite[Lemma 8.9]{Sug23}}]\label{multiplicity}
  Let $E/F$ be a Galois extension of number fields with $[E:F]=n$ and let $\mathfrak{q}\subset\mathcal{O}_E$ be a prime ideal and take $\alpha\in E^{\times}\setminus U_E$.\par
  Assume $\sigma(\mathfrak{q})=\mathfrak{q}$ for all $\sigma\in\mathrm{Gal}(E/F)$.
  Then, the multiplicity of $\mathfrak{q}$ of the prime ideal factorization of the ideal generated by $\mathrm{N}_{E/F}(\alpha)$ in $\mathcal{O}_E$ is a multiple of $n$.
 \end{lemma}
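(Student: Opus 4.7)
The plan is to reduce the claim to a direct computation with the $\mathfrak{q}$-adic valuation, exploiting the Galois-stability hypothesis. Let $v_{\mathfrak{q}}\colon E^{\times}\rightarrow\mathbb{Z}$ denote the normalized discrete valuation attached to $\mathfrak{q}$. By definition, the multiplicity of $\mathfrak{q}$ in the (fractional) ideal factorization of $\mathrm{N}_{E/F}(\alpha)\mathcal{O}_E$ equals $v_{\mathfrak{q}}(\mathrm{N}_{E/F}(\alpha))$, so the whole lemma amounts to proving
\begin{align*}
 v_{\mathfrak{q}}\bigl(\mathrm{N}_{E/F}(\alpha)\bigr)\equiv 0\pmod{n}.
\end{align*}

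First I would record the general transformation rule of valuations under the Galois action: for every $\sigma\in\mathrm{Gal}(E/F)$ and every $\beta\in E^{\times}$,
\begin{align*}
 v_{\mathfrak{q}}\bigl(\sigma(\beta)\bigr)=v_{\sigma^{-1}(\mathfrak{q})}(\beta),
\end{align*}
since $\sigma$ induces an isomorphism of the localization $\mathcal{O}_{E,\sigma^{-1}(\mathfrak{q})}$ with $\mathcal{O}_{E,\mathfrak{q}}$ sending the maximal ideal to the maximal ideal. Plugging in the standing assumption $\sigma(\mathfrak{q})=\mathfrak{q}$ for every $\sigma$ (equivalently, the decomposition group of $\mathfrak{q}$ is all of $\mathrm{Gal}(E/F)$), this formula collapses to $v_{\mathfrak{q}}(\sigma(\alpha))=v_{\mathfrak{q}}(\alpha)$.

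Next I would expand the norm as $\mathrm{N}_{E/F}(\alpha)=\prod_{\sigma\in\mathrm{Gal}(E/F)}\sigma(\alpha)$, which is legitimate because $E/F$ is Galois, and apply $v_{\mathfrak{q}}$ term by term:
\begin{align*}
 v_{\mathfrak{q}}\bigl(\mathrm{N}_{E/F}(\alpha)\bigr)=\sum_{\sigma\in\mathrm{Gal}(E/F)}v_{\mathfrak{q}}\bigl(\sigma(\alpha)\bigr)=n\cdot v_{\mathfrak{q}}(\alpha),
\end{align*}
which is visibly a multiple of $n$, proving the lemma. The role of the hypothesis $\alpha\in E^{\times}\setminus U_E$ is only to guarantee that $v_{\mathfrak{q}}(\alpha)\neq 0$ for at least one prime $\mathfrak{q}$ (so that the statement is not vacuous when one applies it in practice to detect non-norms); the inclusion $\alpha\in E^{\times}$ is what makes $v_{\mathfrak{q}}(\alpha)$ a well-defined integer.

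There is really no serious obstacle here: the lemma is essentially a rephrasing of the fact that when the decomposition group of $\mathfrak{q}$ coincides with the whole Galois group, every Galois conjugate of $\alpha$ has the same $\mathfrak{q}$-adic valuation, so the norm, being the product of all $n$ conjugates, picks up the valuation exactly $n$ times. The only mild subtlety is to remember that $\mathrm{N}_{E/F}(\alpha)$ may not lie in $\mathcal{O}_F$ (since $\alpha$ need not be an integer), so one should interpret $\mathrm{N}_{E/F}(\alpha)\mathcal{O}_E$ as a fractional ideal; once that bookkeeping is in place, the argument is a one-line computation.
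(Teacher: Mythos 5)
Your proof is correct. The paper itself does not prove this lemma — it cites it from \cite[Lemma 8.9]{Sug23} — so there is no internal proof to compare against, but the argument you give is the natural one and it is sound: by the stability hypothesis $\sigma(\mathfrak{q})=\mathfrak{q}$, the $\mathfrak{q}$-adic valuation is Galois-invariant, so applying $v_{\mathfrak{q}}$ to $\mathrm{N}_{E/F}(\alpha)=\prod_{\sigma}\sigma(\alpha)$ yields $n\cdot v_{\mathfrak{q}}(\alpha)$. Your observation about the hypothesis $\alpha\notin U_E$ is also accurate: the conclusion holds trivially (with multiplicity $0$) for units, and the hypothesis only serves to make the lemma's applications nonvacuous, e.g.\ in the argument in Section 5 where the lemma is used to rule out an element being a norm.
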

 Also, the construction of a positive anti-involution is reduced to the following theorems in this paper.
 \begin{theorem}[{cf.\ \cite[Theorem 5.5.3]{BL04}}]\label{construction of positive anti-involution}
  Let $B$ be a totally indefinite quaternion algebra of finite dimension over $\mathbb{Q}$ with center a totally real number field $K$.
  Assume that $B$ is divisional.
  Then, $\mathbb{Q}$-linear map $\phi:B\rightarrow B$ is a positive anti-involution over $\mathbb{Q}$ if and only if it can be written as
  \begin{align*}
   \phi(x)=c^{-1}\overline{x}c  
  \end{align*}
  where $c\in B\setminus K$ with $c^2\in K$ totally negative and $x\mapsto\overline{x}$ is the quaternion conjugation.
 \end{theorem}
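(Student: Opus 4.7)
The plan is to classify all $\mathbb{Q}$-linear anti-involutions of $B$ via Skolem--Noether, then determine which are positive after extending scalars to $\mathbb{R}$, where total indefiniteness gives $B\otimes_{\mathbb{Q}}\mathbb{R}\simeq\prod_{i=1}^{e}M_2(\mathbb{R})$.

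First I would show that positivity forces $\phi|_K=\mathrm{id}_K$: were $\phi|_K=:\sigma$ a nontrivial involution of the totally real field $K$, then writing $K=K^{\sigma}(\sqrt{a})$ with $a\in K^{\sigma}$ totally positive and taking $x=\sqrt{a}$ yields $\sigma(x)x=-a$ totally negative, whence $\mathrm{Tr}_{B/\mathbb{Q}}(\phi(x)x)=4\,\mathrm{Tr}_{K/\mathbb{Q}}(-a)<0$, contradicting positivity. With $\phi$ thus $K$-linear, the composition $x\mapsto\phi(\overline{x})$ is a $K$-algebra automorphism of $B$; by Skolem--Noether it is inner, giving $\phi(x)=a\overline{x}a^{-1}$ for some $a\in B^{\times}$. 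The relation $\phi^{2}=\mathrm{id}$ forces $a\overline{a}^{-1}\in K^{\times}$, and squaring yields $\overline{a}=\pm a$: the $+$ case puts $a\in K$ and $\phi=\overline{\phantom{x}}$, while the $-$ case gives $a\notin K$ with $a^{2}\in K$, so $c:=a^{-1}$ satisfies $c\notin K$, $c^{2}\in K$, and $\phi(x)=c^{-1}\overline{x}c$.

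Next I would rule out the canonical involution $\phi=\overline{\phantom{x}}$ in the totally indefinite case by computing $\mathrm{Tr}_{B/\mathbb{Q}}(\overline{x}x)=4\sum_{i}\det(M_i)$ under the base change, and choosing $x$ whose image has each $\det(M_i)<0$, which is possible precisely because each factor is $M_2(\mathbb{R})$. For the remaining form $\phi(x)=c^{-1}\overline{x}c$, I would work in one factor $M_2(\mathbb{R})$, where $c$ maps to a trace-free $C$ with $C^{2}=\sigma_i(c^{2})I$. Writing $M=tI+M^{0}$ with $M^{0}$ trace-free, using $\overline{M}=\mathrm{tr}(M)I-M$, $(M^{0})^{2}=-\det(M^{0})I$, and the identity $YZ+ZY=\mathrm{tr}(YZ)I$ valid for trace-free $2\times 2$ matrices, a direct calculation yields
\[\mathrm{tr}\bigl(C^{-1}\overline{M}CM\bigr)=2t^{2}-2\det(M^{0})+\frac{\mathrm{tr}(M^{0}C)^{2}}{\det(C)},\]
with $\det(C)=-\sigma_i(c^{2})$. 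This form is invariant under simultaneous $GL_2(\mathbb{R})$-conjugation of $C$ and $M$ and under scaling $C$, so after conjugation one may assume $C=\left(\begin{smallmatrix}0&-1\\1&0\end{smallmatrix}\right)$ when $\det C>0$, giving $2t^{2}+2a^{2}+b^{2}+d^{2}$ (positive definite), or $C=\left(\begin{smallmatrix}1&0\\0&-1\end{smallmatrix}\right)$ when $\det C<0$, giving $2t^{2}-2a^{2}+2bd$ (indefinite). Summing over $i$ via $\mathrm{Tr}_{B/\mathbb{Q}}(\phi(x)x)=2\sum_{i}\mathrm{tr}(\cdots)$ proves positivity iff $\sigma_i(c^{2})<0$ for all $i$, i.e., $c^{2}$ is totally negative.

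The principal obstacle is the sign computation in the displayed formula: the trace-free part of $M_2(\mathbb{R})$ carries a form of signature $(2,1)$ from $-2\det$, and one must check that the rank-one perturbation $\mathrm{tr}(M^{0}C)^{2}/\det(C)$ exactly compensates its negative direction when $\det(C)>0$ and worsens it when $\det(C)<0$. Extra care is needed with the trace conventions $\mathrm{Tr}_{B/K}=2\,\mathrm{Trd}$ and $\mathrm{Tr}_{B/K}(\alpha)=4\alpha$ for $\alpha\in K$, which produce the scalar factors $4$ and $2$ in the displays above.
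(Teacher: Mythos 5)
The paper states this theorem with a citation to \cite[Theorem 5.5.3]{BL04} and gives no proof of its own, so there is no in-paper argument to compare against; one can only compare with the standard proof in Birkenhake--Lange, and your outline reproduces it faithfully.

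Your steps are all sound. Step 1 correctly uses the totally-real hypothesis to force $\phi|_K=\mathrm{id}_K$: if $\phi|_K=\sigma\neq\mathrm{id}$ then $K=K^\sigma(\sqrt a)$ with $a\in K^\sigma$ totally positive (since $K$ totally real forces $\sqrt a$ real under every embedding), and $\mathrm{Tr}_{B/\mathbb{Q}}(\sigma(\sqrt a)\sqrt a)=-4\,\mathrm{Tr}_{K/\mathbb{Q}}(a)<0$. Steps 2--3 are the usual Skolem--Noether reduction, and the dichotomy $\overline a=\pm a$ follows exactly as you say from $a\overline a^{-1}=\lambda\in K^\times$ and $\lambda^2=1$, with the $+$ case giving $a\in K$ and $\phi=\overline{\phantom{x}}$. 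I have verified your key identity
\begin{align*}
\mathrm{tr}\bigl(C^{-1}\overline M C M\bigr)=2t^2-2\det(M^0)+\frac{\mathrm{tr}(M^0C)^2}{\det C},
\end{align*}
using $C^2=-\det(C)I$, $\mathrm{tr}(CM^0C)=\mathrm{tr}(M^0C^2)=-\det(C)\,\mathrm{tr}(M^0)=0$, and $\mathrm{tr}((M^0C)^2)=\mathrm{tr}(M^0C)^2-2\det(M^0)\det(C)$; the normalizations to $\left(\begin{smallmatrix}0&-1\\1&0\end{smallmatrix}\right)$ and $\left(\begin{smallmatrix}1&0\\0&-1\end{smallmatrix}\right)$ and the resulting forms $2t^2+2a^2+b^2+d^2$ and $2t^2-2a^2+2bd$ are correct. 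Two small points worth making explicit when you write this up: (i) in ruling out the canonical involution, and in showing non-positivity when some $\sigma_i(c^2)>0$, you need a density/approximation remark to produce an actual element $x\in B$ (not merely $x\in B\otimes\mathbb{R}$) landing in the relevant open set of $\prod_i M_2(\mathbb{R})$; (ii) in the case $\overline a=+a$ you should note explicitly that fixed points of the canonical involution are exactly $K$ (since $a=\overline a$ gives $2a=\mathrm{Trd}(a)\in K$), which is where $a\in K$ comes from. Also note that the hypothesis that $B$ is divisional (a division algebra) is what guarantees $B$ is a central simple $K$-algebra, which is what Skolem--Noether requires.
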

 \begin{theorem}[{cf.\ \cite[Chapter 21]{Mum70}, \cite[Theorem 5.5.6]{BL04}}]\label{existence of positive anti-involution}
  Let $B$ be a division algebra of finite dimension over $\mathbb{Q}$ with center a CM-field $K$.
  Assume that there exists an anti-involution $\phi:B\rightarrow B$ of the second kind.
  Then there exists a positive anti-involution $\phi':B\rightarrow B$ over $\mathbb{Q}$ of the second kind.
 \end{theorem}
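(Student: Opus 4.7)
The strategy is to modify the given anti-involution $\phi$ by conjugation in $B$. For any $a\in B^{\times}$, the map $\phi_a(x):=a\phi(x)a^{-1}$ is a $\mathbb{Q}$-linear anti-automorphism of $B$ satisfying $\phi_a|_K=\phi|_K$, hence of the second kind. A short computation shows that $\phi_a^{2}=\id$ iff $\phi(a)=\lambda a$ for some $\lambda\in K^{\times}$ with $\lambda\phi(\lambda)=1$; applying Hilbert 90 to the quadratic extension $K/K_0$ and rescaling $a$, one reduces to the case $\phi(a)=a$, i.e., $a\in B^{\phi}:=\{y\in B\mid\phi(y)=y\}$.

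The positivity condition on $\phi_a$ is that the quadratic form $T_{\phi_a}(x):=\mathrm{Tr}_{B/\mathbb{Q}}(\phi_a(x)x)$ be positive definite on $B$, which by base change is equivalent to its positive definiteness on $B_{\mathbb{R}}:=B\otimes_{\mathbb{Q}}\mathbb{R}$. Since $K$ is CM of degree $2e_0$, we have $K\otimes_{\mathbb{Q}}\mathbb{R}\cong\mathbb{C}^{e_0}$ as $\mathbb{R}$-algebras, on which $\phi|_K\otimes\id$ acts as complex conjugation in each factor. Hence $B_{\mathbb{R}}\cong\prod_{i=1}^{e_0}(B\otimes_{K,\sigma_i}\mathbb{C})\cong M_d(\mathbb{C})^{e_0}$, since every finite-dimensional central simple $\mathbb{C}$-algebra is a matrix algebra. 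The extension of $\phi$ fixes the idempotents coming from $K_0\otimes\mathbb{R}$ and so preserves each factor, restricting to an anti-involution of the second kind on each $M_d(\mathbb{C})$.

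By Skolem--Noether, each such anti-involution on $M_d(\mathbb{C})$ has the form $x\mapsto H^{-1}\overline{x}^{\,t}H$ for some invertible Hermitian matrix $H$, and the associated trace form is positive definite iff $H$ is definite. Conjugation by $a=(a_i)\in\prod GL_d(\mathbb{C})$ replaces $H_i$ by $\overline{a_i}^{\,t}H_i a_i$, and by Sylvester's law one can choose each $a_i$ so that every transformed $H_i$ becomes positive definite, making $T_{\phi_a}$ positive definite on $B_{\mathbb{R}}$.

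The main obstacle is that the element $a$ constructed above lies a priori in $B_{\mathbb{R}}^{\times}$ rather than in $B^{\times}$. The resolution is a density argument: $B^{\phi}$ is a $\mathbb{Q}$-subspace of $B$ whose $\mathbb{R}$-span equals the $\phi$-symmetric part of $B_{\mathbb{R}}$, and the $a_i$ above can be chosen Hermitian and hence $\phi$-symmetric in each factor. Since positive definiteness of $T_{\phi_a}$ is an open condition on $a$, any sufficiently good rational approximation $a\in B^{\phi}\cap B^{\times}$ produces a positive anti-involution $\phi':=\phi_a$, still of the second kind because $\phi'|_K=\phi|_K\neq\id_K$.
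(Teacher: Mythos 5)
The paper does not give its own proof of this theorem; it cites Mumford (Abelian Varieties, Chapter 21) and Birkenhake--Lange (Theorem 5.5.6). Your strategy follows the standard argument in those sources: twist $\phi$ by a $\phi$-symmetric unit, pass to $B_{\mathbb{R}}\cong\prod_{i=1}^{e_0}M_d(\mathbb{C})$, represent each second-kind anti-involution on $M_d(\mathbb{C})$ as $x\mapsto H_i^{-1}\overline{x}^{\,t}H_i$ with $H_i$ Hermitian, arrange positive definiteness of each $H_i$, and approximate the real solution by a rational $\phi$-symmetric unit.

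There is, however, a genuine error in the factor-by-factor step, and as written it would block the argument rather than complete it. With your definition $\phi_a(x)=a\phi(x)a^{-1}$, the Hermitian matrix transforms as $H_i\mapsto H_ia_i^{-1}$ (this is Hermitian precisely because $\phi(a)=a$ forces $H_ia_i$ to be Hermitian), not as $H_i\mapsto\overline{a_i}^{\,t}H_ia_i$. The transformation you wrote down is a $*$-congruence, and Sylvester's law of inertia says congruence \emph{preserves} signature; so if some $H_i$ were indefinite, no choice of $a_i$ could make $\overline{a_i}^{\,t}H_ia_i$ positive definite, and your invocation of Sylvester's law is backwards -- it is the obstruction, not the mechanism. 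With the correct formula the problem vanishes: taking $a_i=H_i$, which is $\phi$-fixed in that factor since $\phi(H_i)=H_i^{-1}H_i^{*}H_i=H_i$, gives $H_i'=H_iH_i^{-1}=I$, i.e.\ the standard positive anti-involution $x\mapsto\overline{x}^{\,t}$. (Note also that ``Hermitian'' and ``$\phi$-symmetric'' are different conditions on $a_i$ unless $H_i=I$, so the phrase ``Hermitian and hence $\phi$-symmetric'' needs repair; what you actually want is $a_i\in B_{\mathbb{R}}^{\phi}$, and $a_i=H_i$ satisfies this.) Once that is corrected, the closing density argument -- $B^{\phi}$ is a $\mathbb{Q}$-form of $B_{\mathbb{R}}^{\phi}$, positivity and invertibility are open -- does finish the proof.
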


\section{The minimum value of the first dynamical degrees of automorphisms}\label{The minimum value of the first dynamical degrees of automorphisms}
 First we consider the general results for considering the minimum value of the first dynamical degrees of automorphisms.
 The flow is repeatedly used in the following sections.
 Let $X$ be a $g$-dimensional simple abelian variety and consider the automorphisms and its first dynamical degrees for each type in Table \ref{table1}.
 Define $B$, $K$, $K_0$, $d$, $e$ and $e_0$ as in Section \ref{Endomorphism algebras of simple abelian varieties}.
 \begin{flushleft}{\bf{Type 1}}\end{flushleft}\par
  Since $B=K$ is a totally real number field, so an automorphism $f:X\rightarrow X$ can be identified as an element $\alpha\in U_K$ and its conjugates are all real.
  The degree of $\alpha$ divides $e$ and by the restricton in Table \ref{table1}, it also divides $g=\mathrm{dim}(X)$.
  Assuming that the first dynamical degree is less than $4$, then the conjugates of $\alpha$ are all in the interval $(-2,2)$.
  Thus, by Lemma \ref{cyclotomic}, it is reduced to the deduction of cyclotomic polynomials.\par
  Conversely, let $K$ be a totally real number field with $[K:\mathbb{Q}]=e$ and take $\alpha\in U_K$ and $m\in\mathbb{Z}_{>0}$.
  Then by Proposition \ref{construction for Type 1}, there exists an $em$-dimensional simple abelian variety $X$ with the endomorphism algebra $K=\mathrm{End}_\mathbb{Q}(X)$ with $\mathcal{O}_K\subset\mathrm{End}(X)$.
  Thus for the endomorphism $f:X\rightarrow X$ which corresponds to $\alpha$, the condition $\alpha\in U_K$ implies $\frac{1}{\alpha}\in\mathcal{O}_K$ and so $f:X\rightarrow X$ is an automorphism.\par
  Therefore, the minimum value of the first dynamical degrees for this type is caluculated by
  \begin{align*}
   \mathrm{min}\left\{
                \begin{array}{ll}
                 \text{the square of the maximal absolute value of the conjugates}\\
                 \text{of an algebraic unit, whose conjugates are all real, of degree divides $g$}
                \end{array}
               \right\}\setminus\{1\}
  \end{align*}
 \begin{flushleft}{\bf{Type 2, Type 3}}\end{flushleft}\par
  An automorphism $f$ corresponds to an element $\alpha\in B$ and since $B$ is a quaternion algebra over a totally real number field $K$, $\alpha$ has the minimal polynomial $p(x)\in\mathcal{O}_K[x]$ over $K$ of degree $1$ or $2$.\par
  If the degree is $1$, then $\alpha\in U_K$ and so the deduction about the minimum value is reduced to the case of Type 1.\par
  If the degree is $2$, then it can be written as $p(x)=x^2+ax+b\in\mathcal{O}_K[x]$ with $b\in U_K$.
  Now the minimal poynomial of $\alpha$ over $\mathbb{Q}$ satisfies
  \begin{align*}
   {P(x)}^m=\prod_{i=1}^e(x^2+\sigma_i(a)x+\sigma_i(b))\in\mathbb{Z}[x]
  \end{align*}
  for some $m\in\mathbb{Z}_{>0}$, where $\{\sigma_i\}$ is the set of all $\mathbb{Q}$-embeddings $K\hookrightarrow\mathbb{C}$.\par
  The first dynamical degree of $f$ is the square of the maximal absolute value of the roots of $P(x)$ and we consider the following three cases.\\
  $\bullet$ there exists $1\leq i\leq e$ such that $\abs{\sigma_i(b)}>1$\par
   If $\abs{\sigma_i(b)}>1$, one of the roots of $x^2+\sigma_i(a)x+\sigma_i(b)$ has absolute value not less than $\sqrt{\abs{\sigma_i(b)}}$ and so the first dynamical degree is not less than $\abs{\sigma_i(b)}$.
   $\sigma_i(b)$ are all real and so by assuming that the first dynamical degree is less than $2$, $\sigma_i(b)$ are all in the interval $(-2,2)$ and the remaining is same as the case of Type 1.\\
  $\bullet$ $\sigma_i(b)=1$ for all $1\leq i\leq e$\par
   The roots of $x^2+\sigma_i(a)x+1$ have modulus $1$ if and only if $-2\leq\sigma_i(a)\leq2$.
   Thus, for small first dynamical degrees except $1$, $\mathrm{max}\{\abs{\sigma_i(a)}\}$ must be larger than $2$ and close to $2$.\\
  $\bullet$ $\sigma_i(b)=-1$ for all $1\leq i\leq e$\par
   The roots of $x^2+\sigma_i(a)x-1$ have modulus $1$ if and only if $\sigma_i(a)=0$.
   Thus, for the small first dynamical degrees except $1$, $\mathrm{max}\{\abs{\sigma_i(a)}\}$ must be larger than $0$ and close to $0$.
   Since $\prod_{i=1}^e \sigma_i(a)\in\mathbb{Z}$, $\mathrm{max}\{\abs{\sigma_i(a)}\}\geq1$ for $a\neq0$ and the equality is always achieved by $a=1$.
   For this case, the minimum value of the maximal absolute value of the conjugates is $\frac{1+\sqrt{5}}{2}$.
   Thus, $\frac{3+\sqrt{5}}{2}$ is a lower bound of the first dynamical degrees for this case.\par
  The realizability would be considered independently in the following sections.
 \begin{flushleft}{\bf{Type 4}}\end{flushleft}\par
  For the case $d=1$, $B=K$ is a CM-field and an automorphism of $X$ corresponds to $\alpha\in U_{K}$.\par
  If $\alpha\in K_0$, the deduction is reduced to the case of Type 1.\par
  If $\alpha\in K\setminus K_0$, by Lemma \ref{unit element}, for denoting $a=\abs{\alpha}^2\in U_{K_0}$, $K_0(\sqrt{a})$ is a totally real number field.
  The first dynamical degree of $\alpha$ is the square of the maximal absolute value of the conjugate elements of $\alpha$, and so this is equal to the maximal absolute value of the conjugates of $a\in K_0$.
  In order to search the small first dynamical degree, assume the maximal absolute value of the conjugate elements of $\sqrt{a}$ is less than $2$.
  If $a=1$, then the first dynamical degree would be $1$ and so we assume $a\neq1$.
  Then, as in Lemma \ref{cyclotomic}, it can be written as $\sqrt{a}=\zeta_N^m+\frac{1}{\zeta_N^m}$ ($m,N\in\mathbb{Z}_{>0}$) with $m<N$, $\mathrm{gcd}(N,m)=1$ and we can assume $N\neq1,2,3,4,6$.
  Thus, $a=\left(\zeta_N^m+\frac{1}{\zeta_N^m}\right)^2=\zeta_N^{2m}+\frac{1}{\zeta_N^{2m}}+2$ and so
  \begin{align*}
   [\mathbb{Q}(a):\mathbb{Q}]=\left\{
                               \begin{array}{ll}
                                \frac{1}{2}\varphi(\frac{N}{2}) & (N:\text{even})\\ [+5pt]
                                \frac{1}{2}\varphi(N) & (N:\text{odd})
                               \end{array}.
                              \right.
  \end{align*} 
  Now $K_0\supset\mathbb{Q}(a)\supset\mathbb{Q}$ and so we get the candidates of $N$ from this condition.\par
  Conversely, let $K$ be a CM-field with $[K:\mathbb{Q}]=e=2e_0$ and take $\alpha\in U_K$ and fix $m\in\mathbb{Z}_{>0}$.
  Now $K$ admits the complex conjugate as a positive anti-involution of the second kind.
  If $m\in\mathbb{Z}_{\geq3}$ (or $e\geq4$ and $m\in\mathbb{Z}_{\geq2}$) holds, then by Proposition \ref{construction for Type 4}, there exist an $e_0m$-dimensional simple abelian variety $X$ and an automorphism $f:X\rightarrow X$ which corresponds to $\alpha$.\par
  The case $d\geq2$ would be considered independently in the following sections.

\section{The minimum value of the first dynamical degrees for simple abelian varieties with prime dimensions}\label{The minimum value of the first dynamical degrees for simple abelian varieties with prime dimensions}
 Define
 \begin{align*}
   m(g):=\mathrm{min}\left\{
                      \begin{array}{l}
                       \text{the first dynamical degrees of automorphisms}\\
                       \text{of simple abelian varieties over $\mathbb{C}$ whose dimension is $g$}
                      \end{array}
                     \right\}\setminus\{1\}.
  \end{align*}\par
 We prove the next result in this section.
 \begin{theorem}\label{Result1}
  For a prime number $p$, we get the next result about $m(p)$.
  \begin{align*}
   \begin{cases}
    m(p)=4\mathrm{cos}^2\left(\frac{\pi}{2p+1}\right) & \text{$(2p+1$ is also a prime$)$} \\
    4+4^{-2p-3}<m(p)<(2.08)^2 & \text{$($otherwise$)$}
   \end{cases}
  \end{align*}
 \end{theorem}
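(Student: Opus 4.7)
The plan is to refine the framework of Section \ref{The minimum value of the first dynamical degrees of automorphisms} using the arithmetic fact that $g=p$ is prime. The divisibility constraints in Table \ref{table1} eliminate Types 2 and 3 for odd $p$ and force $d=1$ in Type 4; of the remaining subcases, $e_0=1$ gives only imaginary quadratic units of absolute value $1$ (dynamical degree $1$). All nontrivial contributions therefore come either from a totally real unit $\alpha\in U_K$ of degree $p$ (Type 1, or Type 4 with $\alpha\in K_0$), or from $a=\alpha\bar\alpha\in U_{K_0}$ with $\alpha\in K\setminus K_0$ in Type 4, in which case Lemma \ref{unit element} realises $\sqrt{a}$ as a totally real algebraic integer to which the same Kronecker analysis applies. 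For $p=2$ the additional Type 2/3 subcase with $K=\mathbb{Q}$ is handled directly and yields the same value $4\cos^2(\pi/5)$.

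For the Sophie Germain case, I would assume the first dynamical degree is less than $4$, so that by Lemma \ref{cyclotomic} the relevant totally real unit equals $2\cos(2m\pi/N)$ for some $N\geq 3$ with $\varphi(N)=2p$. An elementary enumeration (using multiplicativity of $\varphi$ and $2p=2\cdot p$) shows that for $p\geq 5$ the only solutions are $N\in\{2p+1,\,2(2p+1)\}$, both requiring $2p+1$ prime and both producing maximal real conjugate $2\cos(\pi/(2p+1))$; the small exceptions at $p=2,3$ (such as $N=9$) yield the same or larger values and can be verified directly. Realisation is immediate: take $K=\mathbb{Q}(2\cos(2\pi/(2p+1)))$, which is totally real of degree $p$, set $\alpha=2\cos(2\pi/(2p+1))\in U_K$ (a unit by Proposition \ref{constant term of minimal polynomial}, since $|\Psi_{2p+1}(0)|=1$), and apply Proposition \ref{construction for Type 1} with $m=1$.

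For non-Sophie Germain $p\geq 7$, the same enumeration rules out $\varphi(N)=2p$, and a short Galois-theoretic argument shows the auxiliary case $\varphi(N)=4p$ (relevant to Type 4 with $\alpha\in K\setminus K_0$) cannot deliver $[\mathbb{Q}(a):\mathbb{Q}]=p$, because $-\sqrt{a}$ fails to be a Galois conjugate of $\sqrt{a}$ for the surviving $N$. Hence the minimal polynomial $P(x)\in\mathbb{Z}[x]$ of the relevant $\alpha$, of degree $p$, has a root $\alpha_1$ with $|\alpha_1|>2$. For the lower bound I would evaluate $P$ at $2$: since $P(2)\in\mathbb{Z}\setminus\{0\}$,
\begin{align*}
1\leq|P(2)|=(\alpha_1-2)\prod_{i\neq 1}|2-\alpha_i|\leq(\alpha_1-2)(\alpha_1+2)^{p-1},
\end{align*}
so $\alpha_1^2-4\geq(\alpha_1+2)^{2-p}$; splitting into $\alpha_1\leq 3$ (where $(\alpha_1+2)^{2-p}\geq 5^{2-p}$) and $\alpha_1>3$ (where $\alpha_1^2>9$) yields $\alpha_1^2>4+4^{-2p-3}$ by the routine comparison $1600\cdot(16/5)^p>1$.

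For the matching upper bound $m(p)<(2.08)^2$, I plan to produce for each non-Sophie Germain prime $p\geq 7$ a Salem number $\beta$ of degree $2p$ with $\beta+\beta^{-1}<2.08$; setting $\alpha=\beta+\beta^{-1}$ then gives a totally real unit of degree $p$ with $\alpha^2<(2.08)^2$, realised as an automorphism via Proposition \ref{construction for Type 1}. Small Salem numbers of every even degree $\geq 10$ are classical (Lehmer's polynomial gives $\beta\approx 1.17628$ in degree $10$, and Salem's construction produces families of small Salem numbers accumulating at any Pisot number), and the low-degree primes $p=7,13,\dots$ can be handled by explicit polynomials from the Salem-number tables. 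The main obstacle will be uniformity: one must simultaneously control the exact degree $2p$, the irreducibility, the unit constant term, and the position of the unique real root above $2$, all while keeping $\beta+\beta^{-1}<2.08$. I expect to handle the generic case by cyclotomic-perturbation constructions in the spirit of Salem, and the finitely many exceptional small primes by direct exhibition.
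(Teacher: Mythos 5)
Your overall architecture---reduction to Types 1 and 4, Kronecker's theorem for the Sophie Germain case, and Salem polynomials for the non--Sophie-Germain upper bound---matches the paper's. Your lower-bound argument is a genuine alternative: where the paper applies the Schinzel--Zassenhaus bound of \cite[Theorem 2]{SZ65} to $a-2$, you evaluate the minimal polynomial at $2$ and obtain a comparable (in fact stronger) inequality by an elementary factorisation; the split at $\alpha_1=3$ and the comparison $1600\cdot(16/5)^p>1$ check out. One bookkeeping point: Proposition \ref{case otherwise} shows that $m(p)$ is the minimum of $\mathcal{A}'$, i.e.\ of maximal conjugates of \emph{totally positive} degree-$p$ units $a$, and in that setting the first dynamical degree is $\max_i a_i$ itself, not the square of the max of a degree-$p$ totally real unit. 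Your $P(2)$ estimate covers the subcase $[\mathbb{Q}(\sqrt a):\mathbb{Q}]=p$; when $[\mathbb{Q}(\sqrt a):\mathbb{Q}]=2p$ you should instead evaluate the degree-$p$ minimal polynomial of $a$ at $4$ (after $a_1>4$ is known from Kronecker together with irrationality), which gives $a_1-4\geq 4^{1-p}>4^{-2p-3}$. Fixable, but not stated.

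The genuine gap is the upper bound $m(p)<(2.08)^2$. It is not enough to exhibit a Salem number $\beta$ of degree $2p$ with $\beta+\beta^{-1}<2.08$; you must also arrange that $\beta+\beta^{-1}$ is a \emph{unit}, which by Lemma \ref{constant term} amounts to $\abs{S(i)}=1$ for the Salem polynomial $S$. This fails for many familiar small Salem polynomials: Lehmer's polynomial $L$ has $\abs{L(i)}=3$, so its trace polynomial does not have constant term $\pm1$ (which is why the $g=10$ minimum in Theorem~B is realised via Type 2 rather than Type 1). Controlling simultaneously the exact degree $2p$, irreducibility, the size bound $<2.08$, and the unit condition is precisely the content of Proposition \ref{Main Proposition}, which the paper proves via the explicit families $P_n$ and $Q_m$ built from the smallest Pisot number, classifying their cyclotomic factors (Lemmas \ref{cyclotomic factor}, \ref{cyclotomic factor for P_n}), determining $\abs{S_n(i)}$ and $\abs{S'_m(i)}$ through a period-$24$ congruence (Lemma \ref{condition for constant term}), and then verifying degree coverage against the tables of Appendix \ref{appendix a}. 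Your phrases ``I expect to handle the generic case by cyclotomic-perturbation constructions'' and ``the finitely many exceptional small primes by direct exhibition'' name the right ingredients but do not supply the construction; as written, the upper bound is unproven.
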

 \begin{remark}
  Especially, $(2.08)^2$ is an upper bound of $m(p)$ for any prime number $p$.
 \end{remark}
 First, we consider the case $p=2$.
 In other words, we prove the next theorem.
 \begin{proposition}\label{case=2}
  \begin{align*}
   m(2)=4\mathrm{cos}^2\left(\frac{\pi}{5}\right)
  \end{align*}
 \end{proposition}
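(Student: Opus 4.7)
The plan is to prove $m(2)=4\cos^2(\pi/5)$ by establishing a matching upper and lower bound, handling the four types in Table \ref{table1} separately. For the upper bound, I would apply Proposition \ref{construction for Type 1} with the totally real field $K=\mathbb{Q}(\sqrt{5})$ and $m=1$: the fundamental unit $\alpha=(1+\sqrt{5})/2\in U_K$ has minimal polynomial $x^2-x-1$ over $\mathbb{Q}$ and conjugates $(1\pm\sqrt{5})/2$, so the corresponding automorphism of the resulting $2$-dimensional simple abelian variety has first dynamical degree $((1+\sqrt{5})/2)^2=(3+\sqrt{5})/2=4\cos^2(\pi/5)$.

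For the lower bound I would run through the four types using the divisibility constraints of Table \ref{table1} for $g=2$. In Type 1 the only nontrivial case is $e=2$: either the maximum absolute value of the real conjugates of $\alpha\in U_K$ is at least $2$, giving $\lambda_1\geq 4>4\cos^2(\pi/5)$, or else Lemma \ref{cyclotomic} forces the minimal polynomial of $\alpha$ over $\mathbb{Q}$ to be $\Psi_N(x)$ with $\varphi(N)/2=2$, so $N\in\{5,8,10,12\}$; Proposition \ref{constant term of minimal polynomial} eliminates $N=8$ and $N=12$ because $\alpha$ must be a unit, leaving $N\in\{5,10\}$, both of which give $\lambda_1=4\cos^2(\pi/5)$. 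For Types 2 and 3 the constraint $2e\mid 2$ forces $e=1$, and the trichotomy in Section \ref{The minimum value of the first dynamical degrees of automorphisms} reduces the nontrivial sub-case to a minimal polynomial $x^2+ax-1$ with $\abs{a}\geq 1$, minimized at $\abs{a}=1$ to give $4\cos^2(\pi/5)$ again. In Type 4 the constraint $d^2e/2\mid 2$ together with $e\geq 2$ excludes $d=2$, and for $d=1,e=2$ ($K$ imaginary quadratic) $U_K$ is finite so only $\lambda_1=1$ arises.

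The main obstacle will be the remaining Type 4 sub-case with $d=1$, $e=4$ (quartic CM $K$), and $\alpha\in K\setminus K_0$. My plan is to set $a=\abs{\alpha}^2\in U_{K_0}$, note that $\sqrt{a}$ is an algebraic unit in the totally real field $K_0(\sqrt{a})$ of degree at most $4$ over $\mathbb{Q}$, and apply Lemma \ref{cyclotomic}: one then has $\sqrt{a}=2\cos(2\pi m/N)$ for an $N$ with $\varphi(N)/2\in\{1,2,4\}$, namely an explicit short list of values of $N$. Proposition \ref{constant term of minimal polynomial} discards those $N$ with $\abs{\Psi_N(0)}\neq 1$, and a direct computation of $\lambda_1=a$ in the remaining cases shows that the minimum above $1$ is once again $4\cos^2(\pi/5)$, attained at $N\in\{5,10\}$. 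Combining all four types yields the stated equality.
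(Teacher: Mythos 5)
Your proposal is correct and follows essentially the same route as the paper's proof: a type-by-type analysis against the divisibility constraints of Table \ref{table1}, with Lemma \ref{cyclotomic} and Proposition \ref{constant term of minimal polynomial} filtering the admissible $N$, and Proposition \ref{construction for Type 1} supplying realizability for the golden-ratio unit in $\mathbb{Q}(\sqrt{5})$. One small imprecision worth noting: in the Type 4, $e=4$ subcase you bound $\varphi(N)/2\leq 4$ via the degree of $\sqrt{a}$, whereas the paper uses the sharper fact that $a\in K_0$ forces $[\mathbb{Q}(a):\mathbb{Q}]\leq 2$; your looser constraint admits extra candidates such as $N=15$ or $N=30$, but these yield larger $a$, so the minimum is unchanged and your conclusion stands.
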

 \begin{proof}
  We consider by dividing into the type of simple abelian varieties in Table \ref{table1}.\par
  Define $B$, $K$, $K_0$, $d$, $e$ and $e_0$ as in Section \ref{Endomorphism algebras of simple abelian varieties} for each type of $X$.
  \begin{flushleft}{\bf{Type 1}}\end{flushleft}\par
   Let $X$ be a $2$-dimensional simple abelian variety of Type 1 and $f\in\mathrm{End}(X)$ be an automorphism of $X$ which corresponds to $\alpha\in U_K$ as in Section \ref{The minimum value of the first dynamical degrees of automorphisms}.
   Now $[K:\mathbb{Q}]=1$ or $2$ by the restriction in Table \ref{table1}.\par
   Assuming that the first dynamical degree is less than $4$, the conjugates of $\alpha$ are all in the interval $(-2,2)$.
   If the degree of $\alpha$ is $1$, then $\alpha=\pm1$ and so the first dynamical degree is $1$ and so we assume the degree of $\alpha$ is $2$.
   Denote the minimal polynomial of $\alpha$ over $\mathbb{Q}$ by $P(x)$, whose degree is $2$.
   By Lemma \ref{cyclotomic}, $x^2P(x+\frac{1}{x})$ is an irreducible cyclotomic polynomial and its degree is $4$.
   Cyclotomic polynomials $\Phi_N(x)$ have degree $4$ only for $N=5,8,10,12$.
   By comparing with Proposition \ref{constant term of minimal polynomial}, since $P(x)$ has constant term $\pm1$, $N=5,10$ are only possible.
   Thus, the minimum value of the maximal absolute value of the roots of $P(x)$ is $2\mathrm{cos}\left(\frac{\pi}{5}\right)$ except $1$.
   The realizability is concerned in Section \ref{The minimum value of the first dynamical degrees of automorphisms}.
   Thus, the minimum value of the first dynamical degrees is $4\mathrm{cos}^2\left(\frac{\pi}{5}\right)=\left(\frac{1+\sqrt{5}}{2}\right)^2$ except $1$ for this type.
  \begin{flushleft}{\bf{Type 2, Type 3}}\end{flushleft}\par
   Let $X$ be a $2$-dimensional simple abelian variety of Type 2 or 3 and $f\in\mathrm{End}(X)$ be an automorphism of $X$ which corresponds to $\alpha\in B$.
   Now $[K:\mathbb{Q}]=1$ by the restriction in Table \ref{table1} and so $K=\mathbb{Q}$.\par
   Denote the minimal polynomial of $\alpha$ over $\mathbb{Q}$ by $P(x)$.
   If the degree of $P(x)$ is $1$, then $\alpha\in U_\mathbb{Q}=\{\pm1\}$ and so the first dynamical degree is $1$.\par
   If the degree of $P(x)$ is $2$, then it can be written as $P(x)=x^2+ax+b\in\mathbb{Z}[x](b\in\{\pm1\})$ and consider the following 3 cases as in Section \ref{The minimum value of the first dynamical degrees of automorphisms}.
   Let $\sigma_1$ be the $\mathbb{Q}$-embedding $\mathbb{Q}\hookrightarrow\mathbb{C}$.\\
   $\bullet$ there exists $1\leq i\leq e$ such that $\abs{\sigma_i(b)}>1$\par
    This case would not be occured for this condition.\\
   $\bullet$ $\sigma_i(b)=1$ for all $1\leq i\leq e$\par
    Since $\mathrm{max}\{\abs{\sigma_i(a)}\}$ must be larger than $2$ and close to $2$, $a=3$ provides the minimum value of the maximal absolute value of the conjugates of $\alpha$.
    Thus, $\left(\frac{3+\sqrt{5}}{2}\right)^2$ is a lower bound except $1$ for this case.\\
   $\bullet$ $\sigma_i(b)=-1$ for all $1\leq i\leq e$\par
    $a=1$ provides the minimum value of the first dynamical degrees as in Section \ref{The minimum value of the first dynamical degrees of automorphisms}.
    Thus, $\left(\frac{1+\sqrt{5}}{2}\right)^2$ is a lower bound except $1$ for this case.\par
   Thus, $\left(\frac{1+\sqrt{5}}{2}\right)^2$ is a lower bound of the first dynamical degrees except $1$ for these types.
  \begin{flushleft}{\bf{Type 4}}\end{flushleft}\par
   Let $X$ be a $2$-dimensional simple abelian variety of Type 4 and $f\in\mathrm{End}(X)$ be an automorphism of $X$ which corresponds to $\alpha\in U_K$.
   Now $d=1$ and $[K:\mathbb{Q}]=2$ or $4$ by the restriction in Table \ref{table1} and then $[K_0:\mathbb{Q}]=1$ or $2$, respectively.
   Denote the minimal polynomial of $\alpha$ over $\mathbb{Q}$ by $P(x)$.\par
   If $\alpha\in K_0$, the minimum value of the square of the maximal absolute value of the roots of $P(x)$ is $\left(\frac{1+\sqrt{5}}{2}\right)^2$ except $1$, by the deduction in Type 1.\par
   If $\alpha\in K\setminus K_0$, by denoting $a=\abs{\alpha}^2\in U_{K_0}$, as in Section \ref{The minimum value of the first dynamical degrees of automorphisms}, $\mathbb{Q}(\sqrt{a})$ is a totally real number field.
   If $a=1$, then the first dynamical degree would be $1$ and so we assume $a\neq1$.
   Also, by assuming that the maximal absolute value of the conjugates of $\sqrt{a}$ is less than $2$, it can be written as $\sqrt{a}=\zeta_N^m+\frac{1}{\zeta_N^m}$ ($m,N\in\mathbb{Z}_{>0}$) with $m<N$, $\mathrm{gcd}(N,m)=1$ and we can assume $N\neq1,2,3,4,6$.
   Thus, $a=\left(\zeta_N^m+\frac{1}{\zeta_N^m}\right)^2=\zeta_N^{2m}+\frac{1}{\zeta_N^{2m}}+2$ and so 
   \begin{align*}
    [\mathbb{Q}(a):\mathbb{Q}]=\left\{
                                \begin{array}{ll}
                                 \frac{1}{2}\varphi(\frac{N}{2}) & (N:\text{even})\\ [+5pt]
                                 \frac{1}{2}\varphi(N) & (N:\text{odd})
                                \end{array}.
                               \right.
   \end{align*} 
   Now $K_0\supset\mathbb{Q}(a)\supset\mathbb{Q}$ and so $[\mathbb{Q}(a):\mathbb{Q}]=1$ or $2$, and this implies $N=5,8,10,12,16,20,24$.
   By comparing with Proposition \ref{constant term of minimal polynomial}, since $P(x)$ has constant term $\pm1$, $N=5,10,24$ are only possible.
   Thus, the minimum value of the maximal absolute value of the roots of $P(x)$ is $2\mathrm{cos}\left(\frac{\pi}{5}\right)$ except $1$.
   Thus, $4\mathrm{cos}^2\left(\frac{\pi}{5}\right)=\left(\frac{1+\sqrt{5}}{2}\right)^2$ is a lower bound of the first dynamical degrees except $1$ for this type.\par
   Therefore, the minimum value of the first dynamical degrees is $\frac{3+\sqrt{5}}{2}=4\mathrm{cos}^2(\frac{\pi}{5})$ and realized on Type 1.
 \end{proof}
 Next, we consider the case that $p$ is a Sophie Germain prime except for $p=2$.
 \begin{proposition}\label{case for Sophie Germain prime}
  For a prime number $p\neq2$ such that $2p+1$ is also a prime,
  \begin{align*}
   m(p)=4\mathrm{cos}^2\left(\frac{\pi}{2p+1}\right)
  \end{align*}
 \end{proposition}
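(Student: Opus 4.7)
The plan is to mirror the type-by-type analysis of Proposition~\ref{case=2} using the framework of Section~\ref{The minimum value of the first dynamical degrees of automorphisms}, adapted to odd prime dimension $p$, and to reduce the core computation to an enumeration of integers $N$ with $\varphi(N) = 2p$ via Lemma~\ref{cyclotomic}.

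First I would eliminate the types that cannot produce a competing value. Types~2 and~3 require $2e \mid g = p$, impossible for $p$ odd. For Type~4, $d^2 e_0 \mid p$ combined with $p$ prime (not a square) forces $d = 1$ and $e_0 \in \{1, p\}$. The subcase $e_0 = 1$ yields $K$ imaginary quadratic: either $\alpha \in K_0 = \mathbb{Q}$, so $\alpha = \pm 1$, or $\alpha \in K \setminus K_0$ with $|\alpha|^2 \in U_{\mathbb{Q}}$ totally positive, forcing $|\alpha|^2 = 1$ and $\lambda_1(f) = 1$. Thus only Type~1 with $[K:\mathbb{Q}] = p$ and Type~4 with $e_0 = p$ and $[K:\mathbb{Q}] = 2p$ remain.

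For both remaining types, under the harmless assumption $\lambda_1(f) < 4$ (justified by $4\cos^2(\pi/(2p+1)) < 4$), the problem reduces to minimizing the square of the maximal absolute value of roots of some $\Psi_N(x)$. In Type~1 the condition $\deg \Psi_N = p$ immediately gives $\varphi(N) = 2p$. In Type~4 with $\alpha \in K \setminus K_0$, applying Lemma~\ref{cyclotomic} to $\sqrt{a}$ with $a = |\alpha|^2 \in U_{K_0}$, the inclusion $\mathbb{Q}(\sqrt{a}) \subset K_0(\sqrt{a})$ combined with the oddness of $p$ forces $[\mathbb{Q}(\sqrt{a}):\mathbb{Q}] \in \{p, 2p\}$ and hence $\varphi(N) \in \{2p, 4p\}$ (the potential divisor $2$ is eliminated using that $K_0$ contains no $\mathbb{Q}$-subfield of degree $2$, which forces $a \in \mathbb{Q}$ and then $a = 1$). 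For $\varphi(N) = 4p$, the bound $\varphi(N) \le N - 1$ gives $N \ge 4p + 1$ for odd $N$, and $\varphi(N) \le N/2$ gives $N \ge 8p$ for even $N$; in both cases the resulting maximal root strictly exceeds $2\cos(\pi/(2p+1))$.

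The central combinatorial step is to enumerate $N$ with $\varphi(N) = 2p$. Since $4 \mid \varphi(N)$ whenever $N$ has two distinct odd prime factors, $N$ is at most twice a prime power; solving $q^{b-1}(q-1) = 2p$ yields $N \in \{2p+1,\,4p+2\}$ in general (both available because $2p+1$ is prime) and additionally $N \in \{9, 18\}$ precisely when $p = 3$. Both generic values give maximal absolute root $2\cos(\pi/(2p+1))$ by Lemma~\ref{cyclotomic}, while the $p = 3$ exceptions give the strictly larger $2\cos(\pi/9)$; Proposition~\ref{constant term of minimal polynomial} confirms $|\Psi_N(0)| = 1$ in each case. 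For realizability I would take $K = \mathbb{Q}(\zeta_{2p+1} + \zeta_{2p+1}^{-1})$ of degree $p$ and $\alpha = \zeta_{2p+1} + \zeta_{2p+1}^{-1} \in U_K$, applying Proposition~\ref{construction for Type 1} with $m = 1$ to construct a $p$-dimensional simple Type~1 abelian variety realizing $\lambda_1(f) = 4\cos^2(\pi/(2p+1))$. The main obstacle I anticipate is the Type~4 analysis, specifically ruling out the $\varphi(N) = 4p$ cases and clarifying the delicate dependence on whether $\sqrt{a}$ lies in $K_0$ or generates a proper extension; the Sophie Germain primality of $2p+1$ enters precisely at the point where $N = 2p+1$ becomes admissible and attains the minimum cosine expression.
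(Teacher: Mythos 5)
Your proposal is correct and follows the same type-by-type framework as the paper's proof, with the same reduction to cyclotomic polynomials via Lemma \ref{cyclotomic} and the same enumeration of $N$ with $\varphi(N)=2p$ (including the $p=3$ exceptions $N=9,18$). The one genuine variation is in the Type~4 analysis: the paper constrains $[\mathbb{Q}(a):\mathbb{Q}]\in\{1,p\}$ directly, obtaining the short list $N\in\{8,12,2p+1,4p+2,8p+4\}$, and then prunes $N=8,12,8p+4$ via Proposition \ref{constant term of minimal polynomial}; you instead constrain $[\mathbb{Q}(\sqrt{a}):\mathbb{Q}]\in\{p,2p\}$, which admits the larger family $\varphi(N)\in\{2p,4p\}$, and eliminate the entire $\varphi(N)=4p$ branch by the growth bounds $\varphi(N)\le N-1$ (odd) and $\varphi(N)\le N/2$ (even). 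Both eliminations are valid; the paper's is tighter at the enumeration stage and relies on the constant-term criterion, while yours is looser at the enumeration stage but dispenses with the constant-term check there. Your direct dispatch of the $e_0=1$ subcase (using $U_\mathbb{Q}=\{\pm1\}$ and total positivity to force $a=1$) is also a clean shortcut compared to the paper's carrying of $N=8,12$ into the constant-term filter, and your explicit realizability via $\alpha=\zeta_{2p+1}+\zeta_{2p+1}^{-1}$ with Proposition \ref{construction for Type 1} correctly pins down what the paper only references to Section \ref{The minimum value of the first dynamical degrees of automorphisms}.
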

 \begin{proof}
  By the restriction in Table \ref{table1}, it suffices to consider only for Type 1 and 4.
  Define $B$, $K$, $K_0$, $d$, $e$ and $e_0$ as in Section \ref{Endomorphism algebras of simple abelian varieties} for each type of $X$.
  \begin{flushleft}{\bf{Type 1}}\end{flushleft}\par
   Let $X$ be a $p$-dimensional simple abelian variety of Type 1 and $f\in\mathrm{End}(X)$ be an automorphism of $X$ which corresponds to $\alpha\in U_K$ as in Section \ref{The minimum value of the first dynamical degrees of automorphisms}.
   Now $[K:\mathbb{Q}]=1$ or $p$ by the restriction in Table \ref{table1}.\par
   Assuming that the first dynamical degree is less than $4$, the conjugates of $\alpha$ are all in the interval $(-2,2)$.
   If the degree of $\alpha$ is $1$, then $\alpha=\pm1$ and so the first dynamical degree is $1$ and so we assume the degree of $\alpha$ is $1$.
   Denote the minimal polynomial of $\alpha$ over $\mathbb{Q}$ by $P(x)$, whose degree is $p$.
   By Lemma \ref{cyclotomic}, $x^pP(x+\frac{1}{x})$ is an irreducible cyclotomic polynomial and its degree is $2p$.
   Cyclotomic polynomials $\Phi_N(x)$ have degree $2p$ only for $N=2p+1,4p+2$ (but for $p=3$, $N=7,9,14,18$).
   By comparing with Proposition \ref{constant term of minimal polynomial}, since $P(x)$ has constant term $\pm1$, $N=2p+1,4p+2$ are possible (but for $p=3$, $N=7,9,14,18$ are possible).
   Thus, the minimum value of the maximal absolute value of the roots of $P(x)$ is $2\mathrm{cos}\left(\frac{\pi}{2p+1}\right)$ except $1$.
   The realizability is concerned in Section \ref{The minimum value of the first dynamical degrees of automorphisms}.
   Thus, the minimum value of the first dynamical degrees is $4\mathrm{cos}^2\left(\frac{\pi}{2p+1}\right)$ except $1$ for this type.
  \begin{flushleft}{\bf{Type 4}}\end{flushleft}\par
   Let $X$ be a $p$-dimensional simple abelian variety of Type 4 and $f\in\mathrm{End}(X)$ be an automorphism of $X$ which corresponds to $\alpha\in U_K$.
   Now $d=1$ and $[K:\mathbb{Q}]=2$ or $2p$ by the restriction in Table \ref{table1} and then $[K_0:\mathbb{Q}]=1$ or $p$, respectively.
   Denote the minimal polynomial of $\alpha$ over $\mathbb{Q}$ by $P(x)$.\par
   If $\alpha\in K_0$, the minimum value of the square of the maximal absolute value of the roots of $P(x)$ is $4\mathrm{cos}^2\left(\frac{\pi}{2p+1}\right)$ except $1$, by the deduction in Type 1.\par
   If $\alpha\in K\setminus K_0$, by denoting $a=\abs{\alpha}^2\in U_{K_0}$, as in Section \ref{The minimum value of the first dynamical degrees of automorphisms}, $\mathbb{Q}(\sqrt{a})$ is a totally real number field.
   If $a=1$, then the first dynamical degree would be $1$ and so we assume $a\neq1$.
   Also, by assuming that the maximal absolute value of the conjugates of $\sqrt{a}$ is less than $2$, it can be written as $\sqrt{a}=\zeta_N^m+\frac{1}{\zeta_N^m}$ ($m,N\in\mathbb{Z}_{>0}$) with $m<N$, $\mathrm{gcd}(N,m)=1$ and we can assume $N\neq1,2,3,4,6$.
   Thus, $a=\left(\zeta_N^m+\frac{1}{\zeta_N^m}\right)^2=\zeta_N^{2m}+\frac{1}{\zeta_N^{2m}}+2$ and so 
   \begin{align*}
    [\mathbb{Q}(a):\mathbb{Q}]=\left\{
                                \begin{array}{ll}
                                 \frac{1}{2}\varphi(\frac{N}{2}) & (N:\text{even})\\ [+5pt]
                                 \frac{1}{2}\varphi(N) & (N:\text{odd})
                                \end{array}.
                               \right.
   \end{align*} 
   Now $K_0\supset\mathbb{Q}(a)\supset\mathbb{Q}$ and so $[\mathbb{Q}(a):\mathbb{Q}]=1$ or $p$, and this implies $N=8,12,2p+1,4p+2,8p+4$ (but for $p=3$, $N=7,8,9,12,14,18,28,36$).
   By comparing with Proposition \ref{constant term of minimal polynomial}, since $P(x)$ has constant term $\pm1$, $N=2p+1,4p+2$ are only possible (but for $p=3$, $N=7,9,14,18$ are possible).
   Thus, the minimum value of the maximal absolute value of the roots of $P(x)$ is $2\mathrm{cos}\left(\frac{\pi}{2p+1}\right)$ except $1$.
   Thus, $4\mathrm{cos}^2\left(\frac{\pi}{2p+1}\right)$ is a lower bound of the first dynamical degrees except $1$ for this type.\par
  Therefore, the minimum value of the first dynamical degrees is $4\mathrm{cos}^2(\frac{\pi}{2p+1})$ and realized on Type 1.
 \end{proof}
 Next, we consider the case that $p$ is not a Sophie Germain prime.
 First, we prove the next lemma.
 \begin{lemma}\label{unrealizable first dynamical degrees}
  The first dynamical degree of an automorphism of a simple abelian variety is not equal to $n^2$ for an integer $n\in\mathbb{Z}_{\geq2}$.
  Especially, The first dynamical degree of an automorphism is not equal to $4$.
 \end{lemma}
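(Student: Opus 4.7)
The plan is to argue by contradiction, assuming $\lambda_1(f)=n^2$ for an integer $n\geq 2$. Since $\lambda_1(f)$ equals the square of the maximum absolute value of a root of the minimal polynomial $P(x)\in\mathbb{Z}[x]$ of $\alpha$ over $\mathbb{Q}$, some root $z$ satisfies $|z|=n$; moreover $P$ is monic irreducible with $|P(0)|=1$ because $f$ is an automorphism.

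First I would observe that $z$ must be non-real: the reduction $P(\pm n)\equiv P(0)\equiv\pm 1\pmod{n}$ is nonzero for $n\geq 2$, so $P$ has no rational root of absolute value $n$, and in particular no real root can achieve the maximum. Therefore $z$ is non-real, its complex conjugate $\bar z$ is also a root of $P$, and $z\bar z=n^2$.

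Next I would case-split by the Albert type of $B=\mathrm{End}_{\mathbb{Q}}(X)$ from Table \ref{table1}; the common theme in every case is to pin down $n^2$ as the value of a rational integer that must simultaneously be an algebraic unit, forcing $n=1$. Type 1 is immediate since all roots of $P$ are real. In Types 2 and 3 the minimal polynomial of $\alpha$ over the totally real $K$ is $p(x)=x^2+ax+b$ with $b\in U_K$, and some $\sigma(p)\in\mathbb{R}[x]$ has both $z$ and $\bar z$ as its roots; thus $\sigma(b)=n^2$, so $b=n^2$ by irreducibility of the minimal polynomial of $b$ over $\mathbb{Q}$, contradicting $b\in U_K$ for $n\geq 2$. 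In Type 4 with $d=1$ ($B=K$ is a CM field), one has $\bar z=\sigma(\phi(\alpha))$ where $\phi$ is the complex conjugation on $K$, so $\sigma(\alpha\phi(\alpha))=n^2$ and $\alpha\phi(\alpha)\in K_0$ is totally real; the minimal polynomial argument yields $\alpha\phi(\alpha)=n^2\in\mathbb{Z}$, and Lemma \ref{unit element}(1) places $\alpha\phi(\alpha)$ in $U_{K_0}$, which forces $n=1$.

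The main obstacle is Type 4 with $d\geq 2$ and the minimal polynomial of $\alpha$ over $K$ of degree $r\geq 2$: the subfield $K(\alpha)\subset B$ is not in general stable under the Rosati involution, so the clean identification $z\bar z=\sigma(\alpha\phi(\alpha))$ from the $d=1$ case does not apply. My strategy here would be to analyze the Rosati-fixed element $\alpha\phi(\alpha)\in B$ through its action on $H^1(X,\mathbb{Q})$: its eigenvalues are non-negative reals by positivity of the Rosati involution, their product is $1$ because $\det\alpha|_{H^1(X,\mathbb{Z})}=\pm 1$ for an automorphism, and the maximum eigenvalue is at least $n^2$ (operator norm dominates spectral radius); combined with the irreducibility of its minimal polynomial in $\mathbb{Z}[x]$, this should again force $n=1$. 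Controlling the potential gap between singular values and eigenvalues in non-normal configurations on the tangent space is the delicate part.
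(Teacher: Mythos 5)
Your proposal is incomplete: you explicitly leave a gap in the Type~4, $d\geq 2$ case, conceding that ``controlling the potential gap between singular values and eigenvalues in non-normal configurations on the tangent space is the delicate part.'' That is not a closed argument, and it is the hardest case by your own accounting. Beyond that, the whole case-split by Albert type is unnecessary.

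The paper's proof is uniform and two lines long: since $f$ is an automorphism, the minimal polynomial $P(x)\in\mathbb{Z}[x]$ of $\alpha$ over $\mathbb{Q}$ is monic with constant term $\pm 1$, so \emph{every} root of $P$, in particular the root $z$ with $\abs{z}=n$ and its complex conjugate $\bar z$ (also a root, as $P\in\mathbb{Z}[x]$), is an algebraic unit. Hence $\tfrac{1}{z}$ and $\tfrac{1}{\bar z}$ are algebraic integers, so their product
\begin{align*}
\frac{1}{z}\cdot\frac{1}{\bar z}=\frac{1}{\abs{z}^2}=\frac{1}{n^2}
\end{align*}
is an algebraic integer that is also rational, hence an integer, which is impossible for $n\geq 2$. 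No knowledge of $B$, of $K$, of the Rosati involution, or of the reality or irreality of $z$ is used at all. You already noticed that $P(0)=\pm 1$ and that $\bar z$ is a root; you simply did not combine these two facts at the level of $z$ itself, and instead tried to push $z\bar z$ down into the various centers $K$ and $K_0$. That works cleanly only when $K(\alpha)$ is small (Types~1--3 and Type~4 with $d=1$), which is exactly why the $d\geq 2$ case stalled. The preliminary claim that $z$ is non-real is correct but superfluous.
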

 \begin{proof}
  Suppose that there is an automorphism $f:X\rightarrow X$ of a simple abelian variety $X$ which has the first dynamical degree $n^2$.
  Let $B$ be the endomorphism algebra of $X$.
  Let $\alpha\in B$ be the element corresponding to $f:X\rightarrow X$ and denote the minimal polynomial of $\alpha$ over $\mathbb{Q}$ by $P(x)\in\mathbb{Z}[x]$.
  Since $f$ is an automorphism, the constant term of $P(x)$ is $\pm1$.\par
  By the assumption that the first dynamical degree of $f$ is $n^2$, the maximal absolute value of the roots of $P(x)$ is $n$, and denote this root by $z$.
  Now $\abs{z}=n$, $\abs{\frac{1}{z}}=\frac{1}{n}$ and $\frac{1}{z}$ is an algebraic integer, and so $\frac{1}{z}\cdot\frac{1}{\bar{z}}=\frac{1}{n^2}$ is also an algebraic integer, a contradiction.
 \end{proof}
 \begin{proposition}\label{case otherwise}
  For a prime number $p$ such that $2p+1$ is not a prime,
  \begin{align*}
   m(p)=\mathrm{min}\left\{
                     \begin{array}{ll}
                      \text{the maximal absolute value of the conjugates of an}\\
                      \text{algebraic unit, whose conjugates are all real and positive, of degree $p$}
                     \end{array}
                    \right\}
  \end{align*}
  and also $m(p)$ is larger than $4+4^{-2p-3}$.
 \end{proposition}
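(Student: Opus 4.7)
The plan is to follow the template of Propositions~\ref{case=2} and~\ref{case for Sophie Germain prime}, dividing into the four types of Table~\ref{table1}. Since $p$ is an odd prime (the case $p=2$ is Sophie Germain and handled by Proposition~\ref{case=2}), the restriction $2e\mid p$ immediately rules out Types 2 and 3, and in Type 4 the restriction $\frac{d^2 e}{2}\mid p$ together with primality and oddness of $p$ forces $d=1$, $e\in\{2,2p\}$, and $e_0\in\{1,p\}$.

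Next I would handle Type 1 and the subcase $\alpha\in K_0$ of Type 4 uniformly: $\alpha$ is an algebraic unit of degree dividing $p$ in a totally real field, so either $\alpha=\pm 1$ (giving dynamical degree $1$) or $\deg\alpha=p$. In the latter case, if $\max_i|\alpha_i|<2$ then Lemma~\ref{cyclotomic} would force the minimal polynomial of $\alpha$ to equal $\Psi_N(x)$ for some $N\geq 3$ with $\varphi(N)=2p$. The candidates $N=2p+1$ and $N=4p+2$ both require $2p+1$ prime, which fails by hypothesis; and the only small exception (arising for $p=3$, with $N=9,18$) is irrelevant since $p=3$ is itself Sophie Germain. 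So $\max_i|\alpha_i|\geq 2$, and strictly so by Lemma~\ref{unrealizable first dynamical degrees}. Moreover $\alpha^2$ is a totally positive algebraic unit of degree $p$ (indeed $\deg\alpha^2\mid p$, and $\deg\alpha^2=1$ would force $\alpha^2\in\mathbb{Q}_{>0}\cap U_\mathbb{Q}=\{1\}$, so $\alpha=\pm 1$), and the dynamical degree $(\max_i|\alpha_i|)^2$ equals its maximum conjugate.

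For Type 4 with $\alpha\in K\setminus K_0$, Lemma~\ref{unit element} makes $a:=|\alpha|^2\in U_{K_0}$ totally positive with $[\mathbb{Q}(a):\mathbb{Q}]\in\{1,p\}$; the degree-$1$ case gives dynamical degree $1$, the degree-$p$ case gives $a$ as a totally positive algebraic unit of degree $p$ with dynamical degree $\max_i|a_i|$, following the discussion in Section~\ref{The minimum value of the first dynamical degrees of automorphisms}. Combining the two types yields $m(p)\geq\min\{\max_i|a_i|\}$ taken over such units; the reverse inequality comes from realizing this minimum via Proposition~\ref{construction for Type 1} applied to $\sqrt{a}\in K_0$ when the square root lies in $K_0$, or via the CM lift of Lemma~\ref{unit element} used with Proposition~\ref{construction for Type 4}, as in Section~\ref{The minimum value of the first dynamical degrees of automorphisms}.

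Finally, for the quantitative bound, let $a$ be a totally positive algebraic unit of degree $p$ with minimal polynomial $Q(x)\in\mathbb{Z}[x]$ (irreducible of degree $p\geq 5$, constant term $\pm 1$) and conjugates $a_1,\ldots,a_p>0$ satisfying $\prod_i a_i=1$, and set $\lambda=\max_i a_i=4+\delta$ with $a_1=\lambda$. The case $\delta\geq 4$ already gives $\lambda\geq 8>4+4^{-2p-3}$, so assume $\delta<4$. Irreducibility forces $Q(4)\neq 0$, so $|Q(4)|=\prod_i|4-a_i|\geq 1$; with $|4-a_1|=\delta$ and the strict bound $|4-a_i|<4$ for $i\geq 2$ (from $0<a_i\leq 4+\delta<8$), one obtains $\delta>4^{-(p-1)}>4^{-(2p+3)}$, hence $m(p)>4+4^{-2p-3}$. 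The main obstacle is not this elementary estimate but the case analysis excluding $\varphi(N)=2p$ under the constant-term constraint of Proposition~\ref{constant term of minimal polynomial} — a check that relies crucially on the failure of $2p+1$ to be prime and the removal of the small-prime exceptions.
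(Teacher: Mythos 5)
Your reduction to Types 1 and 4 and the identification of $m(p)$ with $\min\{\max_i a_i\}$ over totally positive algebraic units $a$ of degree $p$ follows the paper's route, and your elementary estimate via $|Q(4)|\geq 1$ is a cleaner argument than the paper's appeal to \cite[Theorem~2]{SZ65}. However, there are two concrete gaps.

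First, realizability. When $\sqrt{a}\notin\mathbb{Q}(a)$, you invoke Proposition~\ref{construction for Type 4} with $K=\mathbb{Q}(\sqrt{-a})$, $d=1$, $e_0=p$, $m=1$. But that proposition guarantees \emph{simplicity} only under $dm\geq 3$, or $dm=2$ with $e_0\geq 2$; here $dm=1$, so you only obtain a $p$-dimensional abelian variety, a priori possibly non-simple. Your appeal to the discussion in Section~\ref{The minimum value of the first dynamical degrees of automorphisms} does not help, because that discussion likewise requires $m\geq 3$ (or $e\geq 4$ and $m\geq 2$). This is precisely the role of Lemma~\ref{prove simplicity} in the paper, which shows that a $p$-dimensional abelian variety carrying the endomorphism $\sqrt{-a}$ of degree $2p$ over $\mathbb{Q}$ must be simple; without it the inequality $m(p)\leq\min\{\max_i a_i\}$ is unestablished.

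Second, your quantitative bound silently assumes $\lambda:=\max_i a_i\geq 4$. You set $\lambda=4+\delta$ and write $|4-a_1|=\delta$, which presupposes $\delta\geq 0$; if $\lambda<4$, every factor $|4-a_i|$ lies in $(0,4)$ and $|Q(4)|\geq 1$ yields no contradiction. Your cyclotomic argument (Lemma~\ref{cyclotomic} plus the analysis of $\varphi(N)=2p$) rules out $\max_i|\alpha_i|<2$ only for $\alpha$ a totally real unit of degree $p$, i.e.\ for the subset $\mathcal{A}$; it does not by itself rule out $\lambda<4$ for an arbitrary totally positive unit $a$ of degree $p$. The missing step (and what the paper does) is to apply Lemma~\ref{cyclotomic} directly to $a-2$: if $\lambda<4$ then $a-2$ is a totally real degree-$p$ algebraic integer with all conjugates in $(-2,2)$, forcing $p=\tfrac{1}{2}\varphi(N)$ for some $N$, impossible when $2p+1$ is not prime, while $\lambda=4$ is excluded by Lemma~\ref{unrealizable first dynamical degrees} (or simply by $Q(4)\neq 0$). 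With that inserted, your $|Q(4)|\geq 1$ estimate gives $\delta>4^{1-p}>4^{-2p-3}$, which is correct and in fact slightly sharper than the paper's conclusion. (A minor side remark: the constant-term constraint of Proposition~\ref{constant term of minimal polynomial} is not actually needed in this proposition, since the nonexistence of $N$ with $\varphi(N)=2p$ already suffices.)
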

 \begin{remark}
  The minimum value of the right side exists since the coefficients of the minimal polynomial is restricted by the maximal absolute value of its roots by using the triangle inequality.
  As a result, this equation holds for all prime numbers.
 \end{remark}
 \begin{proof}[Proof of Proposition \ref{case otherwise}]
  By the restriction in Table \ref{table1}, it suffices to consider only for Type 1 and 4.
  Define $B$, $K$, $K_0$, $d$, $e$ and $e_0$ as in Section \ref{Endomorphism algebras of simple abelian varieties} for each type of $X$.
  \begin{flushleft}{\bf{Type 1}}\end{flushleft}\par
   Let $X$ be a $p$-dimensional simple abelian variety of Type 1 and $f\in\mathrm{End}(X)$ be an automorphism of $X$ which corresponds to $\alpha\in U_K$ as in Section \ref{The minimum value of the first dynamical degrees of automorphisms}.
   Now $[K:\mathbb{Q}]=1$ or $p$ by the restriction in Table \ref{table1}.\par
   If $\alpha$ has degree $1$, then $\alpha=\pm1$ and the first dynamical degree is $1$ and so we assume $\alpha$ has degree $p$.
   Thus, by the deduction in Section \ref{The minimum value of the first dynamical degrees of automorphisms}, the set of the first dynamical degrees except $1$ for this type is
   \begin{align*}
    \mathcal{A}:=\left\{
     \begin{array}{ll}
      \text{the square of the maximal absolute value of the conjugates}\\
      \text{of an algebraic unit, whose conjugates are all real, of degree $p$}
     \end{array}
    \right\}.
   \end{align*}
  \begin{flushleft}{\bf{Type 4}}\end{flushleft}\par
   Let $X$ be a $p$-dimensional simple abelian variety of Type 4 and $f\in\mathrm{End}(X)$ be an automorphism of $X$ which corresponds to $\alpha\in U_K$.
   Now $d=1$ and $[K:\mathbb{Q}]=2$ or $2p$ by the restriction in Table \ref{table1} and then $[K_0:\mathbb{Q}]=1$ or $p$, respectively.
   Denote the minimal polynomial of $\alpha$ over $\mathbb{Q}$ by $P(x)$.\par
   If $\alpha\in K_0$, the set of the first dynamical degrees except $1$ is in
   \begin{align*}
   \mathcal{A}:=\left\{
    \begin{array}{ll}
     \text{the square of the maximal absolute value of the conjugates}\\
     \text{of an algebraic unit, whose conjugates are all real, of degree $p$}
    \end{array}
   \right\},
   \end{align*}
   by the deduction in Type 1.\par
   If $\alpha\in K\setminus K_0$, by denoting $a=\abs{\alpha}^2\in U_{K_0}$, as in Section \ref{The minimum value of the first dynamical degrees of automorphisms}, $\mathbb{Q}(\sqrt{a})$ is a totally real number field.
   If $a$ has degree $1$, then $a=1$ and so the first dynamical degree would be $1$ and so we assume $a$ has degree $p$.
   The first dynamical degree of $f$ is the square of the maximal absolute value of the conjugates of $\alpha$ and it is equal to the maximal absolute value of the conjugates of $a$.
   Now $a$ is a totally positive algebraic unit of degree $p$, the first dynamical degrees except $1$ are contained in
   \begin{align*}
    \mathcal{A}':=\left\{
     \begin{array}{ll}
      \text{the maximal absolute value of the conjugates of an}\\
      \text{algebraic unit, whose conjugates are all real and positive, of degree $p$}
     \end{array}
    \right\}.
   \end{align*}\par
   Let $a$ be an element of $\mathcal{A}'$.
   If $a$ is an element of $\mathcal{A}$, $a$ is realizable as the first dynamical degree of an automorphism of a simple abelian variety of Type 1.
   If $a$ is not an element of $\mathcal{A}$, then $\sqrt{a}$ is of degree $2p$.
   $a$ is a totally positive algebraic unit of degree $p$ and let $K=\mathbb{Q}(\sqrt{-a})$, $K_0=\mathbb{Q}(a)$, $d=1$, $e=2p$, $e_0=p$ and $m=1$.
   Now $K$ is a CM-field by Lemma \ref{unit element}.
   $K$ admits the complex conjugate as a positive anti-involution of the second kind, so by Proposition \ref{construction for Type 4}, there exist a $p$-dimensional abelian variety $X$ and an automorphism $f:X\rightarrow X$ which corresponds to $\sqrt{-a}$.
   Then, $a$ is the first dynamical degree of $f$ and by Lemma \ref{prove simplicity}, $X$ must be a simple abelian variety.
   Therefore, every element of $\mathcal{A}'$ is realizable as the first dynamical degree of an automorphism of some $p$-dimensional simple abelian variety.\par
   Let $r$ be an algebraic unit, whose conjugates are all real, of degree $p$.
   We can assume that the maximal absolute value of the conjugates of $r$ is $\abs{r}$.
   Then, $r^2$ is an algebraic unit, whose conjugates are all real and positive, and the maximal absolute value of the conjugate of $r^2$ is $r^2=\abs{r}^2$.
   Since $\mathbb{Q}(r)\supset\mathbb{Q}(r^2)\supset\mathbb{Q}$ and $[\mathbb{Q}(r):\mathbb{Q}(r^2)]=1$ or $2$, the degree of $r^2$ is $p$.
   Thus, $\mathcal{A}\subset\mathcal{A}'$ and so
   \begin{align*}
    m(p)=\mathrm{min}(\mathcal{A}').
   \end{align*}\par
   It remains to show $m(p)>4+4^{-2p-3}$.
   If $m(p)<4$, then there exists a totally real algebraic unit $r$ of degree $p$, whose conjugates are all inside the interval $(0,4)$.
   Thus, $r-2$ is also a totally real algebraic integer and so by Lemma \ref{cyclotomic}, the minimal polynomial of $r-2$ is written as $\Psi_N(x)$ for some $N\geq3$.
   The degree of this polynomial is $p=\frac{1}{2}\varphi(N)$, but this would not be occured by the assumption that $2p+1$ is not a prime.
   Therefore, by composing with Lemma \ref{unrealizable first dynamical degrees}, $m(p)>4$.
   Let $r'$ be an algebraic unit of degree $p$ whose conjugates are all real and positive, and at most one conjugate of $r'$ is larger than $4$.
   Then $r'-2$ is a totally real algebraic integer and its maximal absolute value of conjugates is larger than $2$.
   Thus, by \cite[Theorem 2]{SZ65}, the maximal absolute value of the conjugates of $r'-2$ is larger than $2+4^{-2p-3}$, and it concludes that at least one conjugate of $r'$ is larger than $4+4^{-2p-3}$.
   Therefore, $m(p)>4+4^{-2p-3}$.
  \end{proof}
  \begin{lemma}\label{prove simplicity}
   Let $p$ be an odd prime number, $a$ be a totally positive algebraic unit of degree $p$ and $X$ be a $p$-dimensional abelian variety.
   Assume $\sqrt{a}$ is of degree $2p$ and denote $\alpha=\sqrt{-a}$.
   If $\alpha\in\mathrm{End}_\mathbb{Q}(X)$, then $X$ is a simple abelian variety.
  \end{lemma}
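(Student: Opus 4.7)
The plan is to show $X$ is isotypic, reduce to the case $X\sim E^p$ for an elliptic curve $E$, and exclude this by showing $K=\mathbb{Q}(\alpha)$ contains no imaginary quadratic subfield.

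Set $K_0=\mathbb{Q}(a)$; the hypothesis $[\mathbb{Q}(\sqrt{a}):\mathbb{Q}]=2p$ forces $\alpha\notin K_0$, so $K=K_0(\alpha)$ is a CM-field of degree $2p=2\dim X$. The rational representation of $\mathrm{End}_\mathbb{Q}(X)$ on $H_1(X,\mathbb{Q})$ is faithful and of $\mathbb{Q}$-dimension $2p$, so its restriction to $K$ is the regular representation; in particular $H_1(X,\mathbb{Q})$ is simple as a $K$-module. By Poincar\'e's complete reducibility theorem, $X\sim\prod Y_i^{n_i}$ with the $Y_i$ simple and pairwise non-isogenous, and the induced decomposition $H_1(X,\mathbb{Q})=\bigoplus H_1(Y_i,\mathbb{Q})^{n_i}$ is $K$-stable. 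Simplicity forces a single summand, so $X\sim Y^n$ with $n\dim Y=p$. Primality of $p$ then gives either $n=1$ (so $X$ is simple, as desired) or $(n,\dim Y)=(p,1)$, yielding $X\sim E^p$ for an elliptic curve $E$.

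For this latter case, $\mathrm{End}_\mathbb{Q}(X)\cong M_p(F)$ with $F=\mathrm{End}_\mathbb{Q}(E)$ equal to $\mathbb{Q}$ or imaginary quadratic. The case $F=\mathbb{Q}$ dies immediately, since any subfield of $M_p(\mathbb{Q})$ has degree dividing $p$ while $[K:\mathbb{Q}]=2p$. For $F$ imaginary quadratic, $F$ lies in the centre of $M_p(F)$ and commutes with $K$, giving a ring homomorphism $F\otimes_\mathbb{Q} K\to M_p(F)\subset M_{2p}(\mathbb{Q})$. If $F$ did not embed in $K$, then the minimal polynomial of a generator of $F$ would stay irreducible over $K$ and $F\otimes_\mathbb{Q} K$ would be a field of degree $4p$; being non-zero, the homomorphism would be injective, producing a subfield of degree $4p$ in $M_{2p}(\mathbb{Q})$, which is impossible. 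Hence $F$ embeds in $K$, and it remains to show $K$ has no imaginary quadratic subfield.

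Suppose such a subfield existed, generated by $c\in K$ with $c^2\in\mathbb{Q}_{<0}$. Writing $c=c_1+c_2\alpha$ with $c_i\in K_0$ and using $\alpha^2=-a$, the condition $c^2\in K_0$ forces $c_1c_2=0$, and the total reality of $K_0$ rules out $c_2=0$; thus $c=c_2\alpha$ and $ac_2^2=-c^2\in\mathbb{Q}_{>0}$, i.e.\ $a=Dd^2$ with $d=c_2^{-1}\in K_0^\times$ and $D\in\mathbb{Q}_{>0}$, which after absorbing rational squares into $d$ may be taken as a squarefree positive integer. The hypothesis that $a$ is a totally positive algebraic unit gives $N_{K_0/\mathbb{Q}}(a)=1$, so $D^p\cdot N_{K_0/\mathbb{Q}}(d)^2=1$; since $p$ is odd and $N_{K_0/\mathbb{Q}}(d)\in\mathbb{Q}$, this forces $\sqrt{D}\in\mathbb{Q}$, whence $D=1$. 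But then $a=d^2$ is a square in $K_0$, contradicting $[\mathbb{Q}(\sqrt{a}):\mathbb{Q}]=2p$. The hard part I expect is the $F\otimes_\mathbb{Q} K$ reduction, which needs careful treatment of the central embedding $F\hookrightarrow M_p(F)$ alongside the embedding $K\hookrightarrow M_p(F)$ as $\mathbb{Q}$-algebras; once that is in place, the norm identity together with the parity of $p$ and the unit-of-degree-$p$ hypothesis on $a$ yield the contradiction cleanly.
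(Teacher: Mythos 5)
Your proof is correct, and it follows a genuinely different route from the paper's. Both arguments first reduce to the case $X\sim E^p$ with $E$ a CM elliptic curve, but by different means: the paper decomposes $\alpha=\alpha_1+\cdots+\alpha_r$ inside $\bigoplus M_{n_i}(\mathrm{End}_\mathbb{Q}(X_i))$ and bounds the degree of the minimal polynomial $2p=\deg g(x)\leq\sum 2g_in_i=2p$ to force $r=1$; you instead observe that $H_1(X,\mathbb{Q})$ is the regular $K$-module because $\dim_\mathbb{Q}H_1=2p=[K:\mathbb{Q}]$, and simplicity of the regular module of a field forces a single isotypic block. Your version is arguably cleaner, since the paper's ``$g(x)$ is a factor of $p_1(x)\cdots p_r(x)$'' step needs a little unpacking (really $g=\operatorname{lcm}(p_i)$ and irreducibility forces each $p_i=g$). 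The two proofs then diverge in how they kill the $E^p$ case. The paper factors $g(x)=g_1(x)g_2(x)$ over $F=\mathbb{Q}(\sqrt{-D})$, uses that all roots of $g$ lie on $\mathbb{R}i$ to get $g_1(x)=(-1)^pg_2(-x)$, reads off $a_p=0$ and $b_p^2D=1$ from the constant term, and concludes $D=1$, hence $i\in\mathbb{Q}(\alpha)$, hence $\mathbb{Q}(\alpha)=\mathbb{Q}(\sqrt{a})$ --- a contradiction. You instead argue abstractly that $F$ must embed in $K$ (via the commutant map $F\otimes_\mathbb{Q}K\to M_p(F)$ and a degree count), and then show $K$ has no imaginary quadratic subfield at all by a norm computation: writing $a=Dd^2$, the identity $D^pN_{K_0/\mathbb{Q}}(d)^2=N_{K_0/\mathbb{Q}}(a)=1$ together with $p$ odd and $D$ squarefree forces $D=1$, contradicting $[\mathbb{Q}(\sqrt a):\mathbb{Q}]=2p$. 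Your norm argument is more structural and avoids tracking the coefficients of $g_1$; it also makes clearer exactly which hypotheses (totally positive unit, degree $p$, $p$ odd, $\sqrt a$ of degree $2p$) are doing the work. Both are valid; yours buys conceptual clarity at the cost of the small extra lemma that $F$ commutes with $K$ and hence embeds in $K$, which the paper sidesteps by working with the explicit quadratic factorization.
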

  \begin{proof}
   Let $f(x)\in\mathbb{Z}[x]$ be the minimal polynomial of $a$ and so its constant term is $\pm1$.
   Then the minimal polynomial of $\alpha$ over $\mathbb{Q}$ is $g(x)=-f(-x^2)\in\mathbb{Z}[x]$ of degree $2p$.\par
   Assume $X$ is not a simple abelian variety.
   By Poincare's complete reducibility theorem (\cite[Theorem 5.3.7]{BL04}), there is an isogeny
   \begin{align*}
    X\rightarrow X_1^{n_1}\times\cdots\times X_r^{n_r},
   \end{align*}
   where $X_i$ are simple abelian varieties and not isogenious each other.
   Denote $\mathrm{dim}(X_1)=g_1$ and then $p=\sum_{i=1}^r g_in_i$.
   Moreover, by its corollary (cf.\ \cite[Corollary 5.3.8]{BL04}), 
   \begin{align*}
    \mathrm{End}_\mathbb{Q}(X)\simeq\mathrm{M}_{n_1}(\mathrm{End}_\mathbb{Q}(X_1))\oplus\cdots\oplus \mathrm{M}_{n_r}(\mathrm{End}_\mathbb{Q}(X_r)),
   \end{align*}
   where $\mathrm{End}_\mathbb{Q}(X_i)$ are division rings.\par
   $\alpha\in\mathrm{End}_\mathbb{Q}(X)$ is decomposed as $\alpha=\alpha_1+\cdots+\alpha_r$ with $\alpha_i\in \mathrm{M}_{n_i}(\mathrm{End}_\mathbb{Q}(X_i))$.
   Each $\alpha_i$ has the minimal polynomial $p_i(x)$ over $\mathbb{Q}$ and its degree is at most $2g_in_i$.
   The minimal polynomial $g(x)\in\mathbb{Z}[x]$ of $\alpha$ is a factor of $p_1(x)\cdots p_r(x)$ and by the inequality
   \begin{align*}
    2p=\mathrm{deg}(g(x))\leq\sum_{i=1}^r\mathrm{deg}(p_i(x))=\sum_{i=1}^r 2g_in_i=2p,
   \end{align*}
   this implies $r=1$.
   Since $X$ is not simple, the case $g_1=1$, $n_1=p$ is only possible.
   For this case, the minimal polynomial of $\alpha_1$ is $g(x)$.\par
   Now
   \begin{align*}
    \mathrm{End}_\mathbb{Q}(X)=\mathrm{M}_{p}(\mathrm{End}_\mathbb{Q}(X_1))
   \end{align*}
   and since $\alpha_1\in \mathrm{M}_{p}(\mathrm{End}_\mathbb{Q}(X_1))$ has the minimal polynomial of degree $2p$ over $\mathbb{Q}$, $\mathrm{End}_\mathbb{Q}(X_1)\neq\mathbb{Q}$.
   By Table \ref{table1}, $X_1$ is of Type 4 and $\mathrm{End}_\mathbb{Q}(X_1)$ is a totally complex quadratic algebra over $\mathbb{Q}$.
   Denote $\mathrm{End}_\mathbb{Q}(X)=\mathbb{Q}(\sqrt{-D})$ for some $D\in\mathbb{Z}_{>0}$ a square-free integer.
   The polynomial $g(x)$ is written as
   \footnotesize
   \begin{align*}
    g(x)&=(x^p+(a_1+b_1\sqrt{-D})x^{p-1}+\cdots+(a_p+b_p\sqrt{-D}))(x^p+(a_1-b_1\sqrt{-D})x^{p-1}+\cdots+(a_p-b_p\sqrt{-D}))\tag*{($\ast$)}\\
        &=g_1(x)g_2(x)
   \end{align*}
   \normalsize
   with $a_i+b_i\sqrt{-D}\in\mathcal{O}_{\mathbb{Q}(\sqrt{-D})}$.
   If $D\equiv1,2\ (\mathrm{mod}\ 4)$, then $a_i,b_i\in\mathbb{Z}$ and if $D\equiv3\ (\mathrm{mod}\ 4)$, then $a_i,b_i\in\frac{1}{2}\mathbb{Z}$.
   Now the roots of $g(x)$ are in $\mathbb{R}i$ and this implies $g_1(x)=(-1)^pg_2(-x)$.
   Thus, $a_p+b_p\sqrt{-D}=-a_p+b_p\sqrt{-D}$ and so $a_p=0$.
   By considering the constant term of the equation ($\ast$),
   \begin{align*}
    \pm1=a_p^2+b_p^2D=b_p^2D
   \end{align*}
   and so $D=1$.\par
   Thus, it can be said that $\mathbb{Q}(\alpha)\supset\mathbb{Q}(i)$ and therefore $\mathbb{Q}(\alpha)\supset\mathbb{Q}(\sqrt{a})\supset\mathbb{Q}(a)$.
   By the assumption, $[\mathbb{Q}(\sqrt{a}):\mathbb{Q}(a)]=2$ and this implies $\mathbb{Q}(\alpha)=\mathbb{Q}(\sqrt{a})$.
   This contradicts to $\mathbb{Q}(\alpha)\not\subset\mathbb{R}$ and $\mathbb{Q}(\sqrt{a})\subset\mathbb{R}$, and so $X$ is a simple abelian variety.
  \end{proof}
 Next, for proving Theorem \ref{Result1}, we would like to prove $m(p)<(2.08)^2$ for a prime number $p$ which is not a Sophie Germain prime.
 This result follows from the next proposition.
 \begin{proposition}\label{Main Proposition}
  For any prime number $p>3$, there exists a monic irreducible polynomial $T(x)\in\mathbb{Z}[x]$ of degree $p$, whose constant term is $\pm1$ and whose roots are all real and inside the interval $(-2.08,2.08)$.
 \end{proposition}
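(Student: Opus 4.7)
The plan is to exhibit, for each prime $p > 3$, an explicit polynomial satisfying all the requirements. The cleanest conceptual source is the trace construction for Salem numbers: if $\lambda$ is a Salem number of algebraic degree $2p$, then its minimal polynomial $S(y) \in \mathbb{Z}[y]$ is monic, reciprocal and of even degree, and can be rewritten as $S(y) = y^p T(y + y^{-1})$ for a unique monic $T(x) \in \mathbb{Z}[x]$ of degree $p$. The roots of $T$ are $\lambda + \lambda^{-1}$ together with the $p-1$ real traces $\mu + \mu^{-1}$ of the unit-circle conjugates $\mu$ of $\lambda$; the latter all satisfy $|\mu + \mu^{-1}| \leq 2$. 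Hence, if the Salem number is chosen with $\lambda + \lambda^{-1} < 2.08$ (equivalently $\lambda < \tfrac{1}{2}(2.08 + \sqrt{0.3264}) \approx 1.326$), every root of $T$ lies in $(-2.08, 2.08)$, and irreducibility of $S$ forces that of $T$.

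The nontrivial requirement is $T(0) = \pm 1$, which does \emph{not} hold for a generic Salem number: for instance Lehmer's degree-$10$ Salem polynomial yields $T(0) = 3$. In general $T(0) = (-1)^p(\lambda + \lambda^{-1})\prod_i (\mu_i + \mu_i^{-1})$ is the norm of $\lambda + \lambda^{-1}$ in the totally real field $\mathbb{Q}(\lambda + \lambda^{-1})$, and must be forced to be $\pm 1$ by careful choice. To obtain a uniform family indexed by primes $p > 3$, I would apply Salem's inflation construction: start from a reciprocal polynomial $R(y) \in \mathbb{Z}[y]$ of degree $r < 2p$ all of whose roots lie on the unit circle and with $R(0) = \pm 1$, and form
\begin{align*}
 S(y) \;=\; y^{2p-r} R(y) \;\pm\; R^{*}(y).
\end{align*}
For large enough inflation $S$ is reciprocal of degree $2p$ with a unique pair of real reciprocal roots $\lambda^{\pm 1}$, and $\lambda + \lambda^{-1} \to 2$ as the inflation grows, so the bound $2.08$ is automatic. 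The unit condition $T(0) = \pm 1$ translates into an arithmetic condition on the cyclotomic factors composing $R$ and is secured by tuning $R$ appropriately.

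The main obstacle is the simultaneous verification, for each prime $p > 3$, of three properties: (i) irreducibility of $S$ (and hence of $T$), (ii) the bound on the largest root, and (iii) the unit condition $T(0) = \pm 1$. I expect the argument to split into two regimes: a finite set of small primes, treated by an explicit search for $T(x)$ among polynomials with small integer coefficients followed by Sturm's theorem to certify that all roots lie in $(-2.08, 2.08)$; and an asymptotic regime for large primes, treated by a family version of Salem's inflation construction in which the base polynomial $R$ is chosen uniformly in $p$ to enforce irreducibility and the unit condition together. The upper bound $2.08$ is deliberately loose (compared with the Schinzel--Zassenhaus lower bound $4+4^{-2p-3}$ for $m(p)^{1/2}$ on the squared scale), which is exactly the slack that allows Salem's construction with bounded base data to succeed across all such $p$.
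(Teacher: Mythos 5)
Your high-level strategy matches the paper's: use Salem polynomials $S(y)$ of degree $2p$ whose trace polynomial $T(x)$ (defined by $S(y)=y^pT(y+y^{-1})$) has degree $p$, observe that the unit-circle conjugates of the Salem number contribute roots of $T$ in $[-2,2]$, and require $\lambda+\lambda^{-1}<2.08$ together with the crucial unit condition $T(0)=\pm1$. You also correctly identify $T(0)=\pm1$ as the genuine arithmetic obstruction, and your numerical analysis ($\lambda\lesssim1.326$) is accurate.

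However, the proposal does not actually establish the proposition, because the central difficulty you flag — exhibiting, for \emph{every} prime $p>3$, an irreducible Salem polynomial of degree exactly $2p$ with $T(0)=\pm1$ and Salem number below $\approx 1.326$ — is identified but left unresolved. You write that the unit condition "is secured by tuning $R$ appropriately" and that you "expect the argument to split into two regimes," which is a plan, not an argument. The missing content is precisely the hard part: how does one guarantee, uniformly over all primes $p>3$, that a degree-$2p$ Salem polynomial with the unit condition exists? Also, your proposed inflation $S(y)=y^{2p-r}R(y)\pm R^*(y)$ takes $R$ to be a reciprocal polynomial with all roots on the unit circle, whereas the classical Salem construction (which is what actually works here) starts from a \emph{Pisot} polynomial.

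For comparison, the paper's proof carries this out concretely by applying Salem's construction to the smallest Pisot polynomial $x^3-x-1$, giving two explicit families $P_n(x)$ and $Q_m(x)$ (Lemma \ref{make Salem polynomials}). It then determines exactly which cyclotomic polynomials divide $P_n$ and $Q_m$ and for which residues of $n,m$ (Lemmas \ref{cyclotomic factor}, \ref{cyclotomic factor for P_n}, Remarks \ref{equivalent condition2}, \ref{equivalent condition1}), shows the Salem factor's constant-term condition $\abs{S(i)}=1$ reduces to a congruence mod $24$ on $n$ or $m$ (Lemmas \ref{constant term}, \ref{condition for constant term}), and uses a mod-$360$ periodicity of the degree of the Salem factor (Tables \ref{appendix 3} and \ref{appendix 4}) to verify that every $2p$ with $p>3$ prime does occur with the required constant term. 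The monotone convergence of the Salem numbers to the smallest Pisot number $1.3247\cdots$ (Lemma \ref{convergence lemma}) then yields the bound $2\cdot1.3247\cdots+\text{(correction)}<2.08$, with the single worst case ($S'_{27}$ of degree $26$) checked by hand. This explicit bookkeeping — the cyclotomic factor analysis, the mod-$24$ and mod-$360$ congruences, and the degree tables — is what actually closes the argument, and it is entirely absent from your proposal.
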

 \begin{remark}
  The proposition holds also for $p=2,3$, but for simplifying the proof, we assume $p>3$.
 \end{remark}
 For proving this proposition, we use the next lemmas.
 \begin{lemma}\label{make Salem polynomials}
  Define the polynomial
  \begin{align*}
  P_n(x)= \frac{x^{n-2}(x^3-x-1)+(x^3+x^2-1)}{x-1},\ Q_m(x)=x^{m-3}(x^3-x-1)-(x^3+x^2-1)
  \end{align*}
  for $n\geq3$ and $m\geq4$,
  The next properties hold.
  \begin{enumerate}
   \item For any $n\geq3$, there is at most one root of mudulus larger than $1$, counted with multiplicity, among the roots of $P_n(x)$, and it is a real root.
   \item For any $m\geq4$, there is at most one root of mudulus larger than $1$, counted with multiplicity, among the roots of $Q_m(x)$, and it is a real root.
  \end{enumerate}
 \end{lemma}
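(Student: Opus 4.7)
\emph{Plan.} I treat (1) in detail; (2) follows by the same argument with inessential sign changes. Set $f(z) = z^{3} - z - 1$, the minimal polynomial of the plastic number $\rho \approx 1.3247$ (the smallest Pisot number), and let $f^{\ast}(z) = z^{3} f(1/z) = 1 - z^{2} - z^{3}$ be its reciprocal polynomial. An immediate computation gives $P_{n}(z)(z-1) = z^{n-2} f(z) - f^{\ast}(z)$, and $z^{n} P_{n}(1/z) = P_{n}(z)$, so $P_{n}$ is self-reciprocal of degree $n$. Hence its roots come in reciprocal pairs $\{\alpha, 1/\alpha\}$, and a non-real root off the unit circle would contribute, together with its complex conjugate and the two reciprocals, at least four off-circle roots. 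It therefore suffices to prove that $P_{n}$ has at most two roots off the unit circle.

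Let $\phi(z) := z^{n-2} f(z)/f^{\ast}(z)$. Since $f$ has real coefficients and $1/z = \bar{z}$ on $|z|=1$, one has $f^{\ast}(z) = z^{3}\overline{f(z)}$ there, so $|\phi(z)| = 1$ for $|z|=1$. A direct check shows that no zero of $f^{\ast}$ is a zero of $z^{n-2} f(z)$, hence the zeros of $\phi - 1$ in $\mathbb{C}$ coincide with those of $P_{n}(z)(z-1)$. By the argument principle, the topological degree of $\phi|_{|z|=1} : S^{1} \to S^{1}$ equals the number of zeros of $\phi$ in the open unit disc minus the number of poles there, namely $(n-2) + 2 - 1 = n - 1$: the polynomial $f$ has two complex conjugate roots of modulus $\approx 0.87$ in $|z|<1$, while $f^{\ast}$ has only the real root $1/\rho$ there, and the factor $z^{n-2}$ contributes $n-2$ zeros at the origin.

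A continuous self-map of $S^{1}$ of positive degree $d$ has at least $d$ preimages of every point: the lift $F: \mathbb{R} \to \mathbb{R}$ satisfies $F(x+1) = F(x) + d$, so the range of $F|_{[0,1]}$ has length at least $d$, and for each of the at least $d$ integers $k$ with $y-k$ in that range the intermediate value theorem furnishes a distinct solution of $F(x) = y - k$. Applying this with $d = n - 1$ and target $1$, the polynomial $P_{n}(z)(z-1)$ has at least $n - 1$ distinct zeros on the unit circle; one of them is $z = 1$, so $P_{n}$ itself has at least $n - 2$ zeros on the unit circle, and hence at most two roots off the unit circle. By self-reciprocity these two roots form a single reciprocal pair $\{\alpha, 1/\alpha\}$, and the complex-conjugation observation forces $\alpha$ to be real. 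For $Q_{m}$, the same argument using $\phi(z) = z^{m-3} f(z)/f^{\ast}(z)$ and the equation $\phi = -1$ gives degree $m - 2$ and, since $Q_{m}(1) = -2 \neq 0$, at least $m - 2$ zeros of $Q_{m}$ itself on the unit circle, yielding (2).

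\emph{Main obstacle.} The delicate point is the interaction between the algebraic multiplicity of zeros of $P_{n}(z)(z-1)$ on the unit circle and the set-theoretic preimage count for $\phi|_{S^{1}}$; the bound works as stated because each distinct preimage is a zero of multiplicity at least $1$, so the distinct-preimage lower bound $n-1$ automatically lower-bounds the multiplicity-counted total. If one prefers to bypass the circle-map discussion, one can instead apply Rouch\'{e}'s theorem to $z^{n-2}f(z)$ and $f^{\ast}(z)$ on the perturbed circles $|z| = 1 \pm \varepsilon$ and pass to the limit $\varepsilon \to 0^{+}$, as in Salem's original construction of Salem numbers from Pisot polynomials.
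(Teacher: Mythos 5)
Your proof is correct and is essentially the argument the paper appeals to: the paper's own "proof" is a one-line citation to the proof of \cite[Theorem 6.4.1]{BDGPS92} (Salem's construction of Salem numbers from the Pisot polynomial $x^3-x-1$), and your winding-number argument for $\phi(z)=z^{n-2}f(z)/f^{*}(z)$ on the unit circle, combined with self-reciprocity and real coefficients, is the standard self-contained way of carrying out exactly that construction.
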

 \begin{proof}
  $x^3-x-1$ is the minimal polynomial of (the smallest) Pisot number and by the proof of \cite[Theorem 6.4.1]{BDGPS92}, the lemma holds.
 \end{proof}
 \begin{lemma}\label{cyclotomic factor}
  The cyclotomic polynomials which divide $Q_m(x)\in\mathbb{Z}[x]$ for some $m\geq4$ are only $\Phi_2(x), \Phi_8(x), \Phi_{12}(x), \Phi_{18}(x), \Phi_{30}(x)$.
 \end{lemma}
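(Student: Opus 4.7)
The plan is to translate the divisibility $\Phi_N(x)\mid Q_m(x)$ into an algebraic condition on a single element of $\mathbb{Q}(\zeta_N)$, and then combine direct verification for the five listed $N$ with a bounding argument via vanishing-sum theory for all other $N$. Writing $A(x)=x^3-x-1$ and $B(x)=x^3+x^2-1$, so that $Q_m(x)=x^{m-3}A(x)-B(x)$, I first observe that neither $A$ nor $B$ has a root of unity among its roots: the roots of $A$ are the smallest Pisot number $\theta\approx 1.3247$ together with two complex conjugates inside the unit disk, while $B$ has one real root $\alpha\in(0,1)$ and two complex conjugates of modulus $\alpha^{-1/2}>1$. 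Hence $A(\zeta_N),B(\zeta_N)\neq 0$, and $\Phi_N\mid Q_m$ is equivalent to $\zeta_N^{m-3}=B(\zeta_N)/A(\zeta_N)$, a condition that depends only on $m\bmod N$ once $x^m,x^{m-2},x^{m-3}$ are reduced modulo $\Phi_N(x)$ via $x^N\equiv 1$.

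For the sufficiency direction, I would verify each of the listed $N$ by explicit modular reduction. The case $N=2$ is immediate from $Q_m(-1)=(-1)^m+1$. For $N\in\{8,12,18,30\}$, using the relations $x^4\equiv -1$, $x^4\equiv x^2-1$, $x^6\equiv x^3-1$, and $x^8\equiv -x^7+x^5+x^4+x^3-x-1$ respectively, one iterates through the $N$ residue classes of $r=m\bmod N$ and computes $Q_m\bmod\Phi_N$ as a polynomial of degree $<\varphi(N)$. In each case exactly one residue yields zero, namely $m\equiv 2\pmod 8$, $m\equiv 1\pmod{12}$, $m\equiv 17\pmod{18}$, and $m\equiv 24\pmod{30}$.

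The harder, necessary direction rests on rewriting $Q_m(\zeta_N)=0$ as the six-term vanishing sum of roots of unity
\[
\zeta_N^m-\zeta_N^{m-2}-\zeta_N^{m-3}-\zeta_N^3-\zeta_N^2+1=0.
\]
Applying Mann's theorem on vanishing sums of roots of unity, together with a case analysis of how this six-term sum can decompose into primitive minimal subsums (via the Conway--Jones classification of minimal vanishing sums of length at most six), forces $N$ into a short explicit finite list of candidates of the form $2^a3^b5^c$. For each remaining candidate $N$ outside $\{2,8,12,18,30\}$, the same modular-reduction procedure used in the sufficiency direction rules out all $N$ residue classes by direct computation.

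The main obstacle will be obtaining a sharp enough bound on $N$: Mann's theorem alone only restricts the prime divisors of $N$ to $\{2,3,5\}$ and therefore leaves infinitely many candidates $2^a3^b5^c$. To cut this down to a manageable finite list one must invoke the Conway--Jones refinement (which bounds the conductor arising in minimal six-term vanishing sums), or alternatively run a valuation-theoretic argument in $\mathbb{Z}[\zeta_N]$ showing that the ideals generated by $A(\zeta_N)$ and $B(\zeta_N)$ can only coincide up to units for $N$ in the claimed set. Once such a bound is in place, the remaining case-by-case modular verification is entirely mechanical.
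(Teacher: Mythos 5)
Your approach is genuinely different from the paper's. The paper rewrites $Q_m(\zeta_N)=0$ as $\zeta_N^{3-m}=(\zeta_N^3-\zeta_N-1)/(\zeta_N^3+\zeta_N^2-1)$, checks $N\le 8$ by direct computation, and for $N>8$ converts the condition into an angle equation $\angle PAQ=\pi/2N,\pi/N,\ldots$ which it bounds via the inscribed angle theorem (Figures 1--3) together with the monotonicity result of Remark \ref{Remark} / Appendix \ref{appendix b}. You instead encode the condition as a six-term vanishing sum of roots of unity and invoke Mann's theorem and the Conway--Jones classification. Both strategies aim to produce a finite candidate list for $N$ followed by mechanical residue checks, and the parts you do carry out (nonvanishing of $A(\zeta_N),B(\zeta_N)$ via the location of the roots of $x^3-x-1$ and $x^3+x^2-1$; the $m\bmod N$ reductions; the explicit residues for $N=2,8,12,18,30$, which match Remark \ref{equivalent condition2}) are all correct.

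However, the crucial bounding step is left open, and you flag this yourself. Two concrete issues: first, the sum $\zeta_N^m-\zeta_N^{m-2}-\zeta_N^{m-3}-\zeta_N^3-\zeta_N^2+1=0$ is not yet a positive-coefficient vanishing sum; absorbing the signs sends you to $N$-th roots for $N$ even but to $2N$-th roots for $N$ odd, and all subsequent divisor conditions must account for this. Second, and more seriously, Mann's theorem applies to each \emph{minimal} subsum in a decomposition, so you must enumerate all ways the six terms could split into minimal vanishing subsums (including length-$2$ pairs of opposite roots of unity, which impose only a parity condition on $N$ and leave the remaining terms to constrain it), and in each case extract a bound on $N$ from the explicit exponents $m, m-2, m-3, 3, 2, 0$ (shifted by $N/2$ or $N$). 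This Conway--Jones casework is the entire content of the hard direction, and without it the argument does not yet yield the claimed finite list. The alternative you mention, a valuation argument showing $(A(\zeta_N))=(B(\zeta_N))$ as ideals only for the listed $N$, is likewise only gestured at. So the route is promising and genuinely distinct from the paper's elementary geometric one, but as written it has a real gap at exactly the point where the proof's difficulty lies.
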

 \begin{proof}
  For $N\geq1$, if $\Phi_N(x)$ divides $Q_m(x)$ for some $m\geq4$, then $Q_m(\zeta_N)=0$ for some $m\geq4$.
  Thus,
  \begin{align*}
   \zeta_N^{3-m}=\frac{\zeta_N^3-\zeta_N-1}{\zeta_N^3+\zeta_N^2-1}=-\zeta_N^{-3}\frac{1+\zeta_N-\zeta_N^3}{1+\zeta_N^{-1}-\zeta_N^{-3}}\tag*{($\ast\ast$)}
  \end{align*}
  and for $N=1$ to $8$, left side calculated as
  \begin{align*}
   &N=1:\frac{\zeta_N^3-\zeta_N-1}{\zeta_N^3+\zeta_N^2-1}=-1,\quad                       &N=2&:\frac{\zeta_N^3-\zeta_N-1}{\zeta_N^3+\zeta_N^2-1}=1,\\
   &N=3:\frac{\zeta_N^3-\zeta_N-1}{\zeta_N^3+\zeta_N^2-1}=-\zeta_3^2,\quad               &N=4&:\frac{\zeta_N^3-\zeta_N-1}{\zeta_N^3+\zeta_N^2-1}=\frac{4+3i}{5},\\
   &N=5:\frac{\zeta_N^3-\zeta_N-1}{\zeta_N^3+\zeta_N^2-1}=-\zeta_5^3,\quad               &N=6&:\frac{\zeta_N^3-\zeta_N-1}{\zeta_N^3+\zeta_N^2-1}=\frac{11+5\sqrt{3}i}{14},\\
   &N=7:\frac{\zeta_N^3-\zeta_N-1}{\zeta_N^3+\zeta_N^2-1}=\frac{3+\sqrt{7}i}{4},\quad    &N=8&:\frac{\zeta_N^3-\zeta_N-1}{\zeta_N^3+\zeta_N^2-1}=\zeta_8.
  \end{align*}
  Thus, $N=2,8$ are only possible.
  For $N>8$, the equation ($\ast\ast$) implies
  \begin{align*}
   \mathrm{arg}(1+\zeta_N-\zeta_N^3)=-\frac{\pi}{2N}, -\frac{\pi}{N}, -\frac{3\pi}{2N}, \ldots
  \end{align*}
  and this is equivalent to
  \begin{align*}
   \angle{PAQ}=\frac{\pi}{2N}, \frac{\pi}{N}, \frac{3\pi}{2N}, \ldots
  \end{align*}
  in Figure \ref{figure1}, Figure \ref{figure2} or Figure \ref{figure3}.
  By some coordinate calculation, it is also equivalent to
  \begin{align*}
   \frac{2\mathrm{sin}\left(\frac{2\pi}{N}\right)\mathrm{cos}\left(\frac{4\pi}{N}\right)}{1+2\mathrm{sin}\left(\frac{2\pi}{N}\right)\mathrm{sin}\left(\frac{4\pi}{N}\right)}=\mathrm{tan}\left(\frac{\pi}{2N}\right), \mathrm{tan}\left(\frac{\pi}{N}\right), \mathrm{tan}\left(\frac{3\pi}{2N}\right), \ldots.
  \end{align*}
  \begin{enumerate}
   \item{$9\leq N<18$}\\
    See Figure \ref{figure1}.
    For this case, point A is outside of the unit circle, so by the inscribed\\ angle theorem, $\angle{PAQ}<\frac{2\pi}{N}$ and so $\angle{PAQ}=\frac{\pi}{2N},\frac{\pi}{N},\frac{3\pi}{2N}$.
    By observing the equations
    \begin{equation*}
     \frac{2\mathrm{sin}(\theta)\mathrm{cos}(2\theta)}{1+2\mathrm{sin}(\theta)\mathrm{sin}(2\theta)}=\mathrm{tan}\left(\frac{\theta}{4}\right), \mathrm{tan}\left(\frac{\theta}{2}\right), \mathrm{tan}\left(\frac{3\theta}{4}\right)
    \end{equation*}
    for $\frac{\pi}{9}<\theta\leq\frac{2\pi}{9}$, then $\theta=\frac{\pi}{6}$ and this correeponds to the case $N=12$.
   \item{$N=18$}\\
    See Figure \ref{figure2}.
    For this case, point A is on the unit circle, so by the inscribed angle theorem, $\angle{PAQ}=\frac{2\pi}{N}$, and this is the case.
   \item{$18<N$}\\
    See Figure \ref{figure3}.
    For this case, point A is inside of the unit circle, so by the inscribed angle theorem, $\angle{PAQ}>\frac{2\pi}{N}$.
    Also, by $\angle{PBA}=\frac{2\pi}{N}$ and $AB<1<AP$, $\angle{PAQ}=\angle{PBA}+\angle{APB}<2\angle{PBA}=\frac{4\pi}{N}$ and so $\angle{PAQ}=\frac{5\pi}{2N},\frac{3\pi}{N},\frac{7\pi}{2N}$.
    By observing the equations
    \begin{equation*}
     \frac{2\mathrm{sin}(\theta)\mathrm{cos}(2\theta)}{1+2\mathrm{sin}(\theta)\mathrm{sin}(2\theta)}=\mathrm{tan}\left(\frac{5\theta}{4}\right), \mathrm{tan}\left(\frac{3\theta}{2}\right), \mathrm{tan}\left(\frac{7\theta}{4}\right)
    \end{equation*}
    for $0<\theta<\frac{\pi}{9}$, then $\theta=\frac{\pi}{15}$ and this correeponds to the case $N=30$.
  \end{enumerate}
  \begin{figure}
  \begin{tabular}{cc}
  \begin{minipage}{0.45\hsize}
   \includegraphics[scale=0.5]{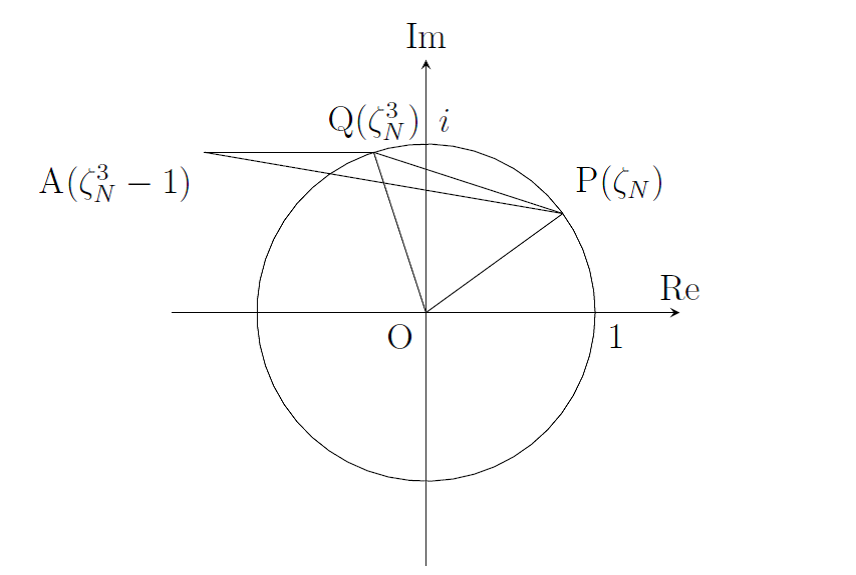}
   \caption{$9<N<18$}\label{figure1}
  \end{minipage} &
  \begin{minipage}{0.45\hsize}
   \includegraphics[scale=0.5]{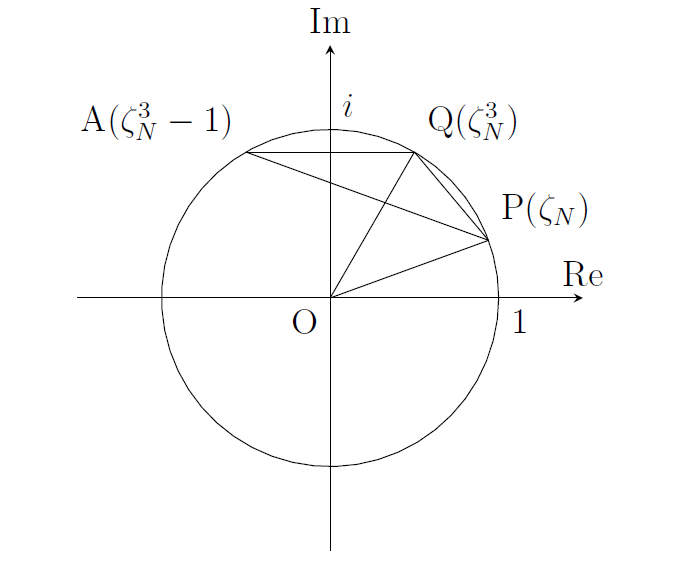}
   \caption{$N=18$}
   \label{figure2}
  \end{minipage}
  \end{tabular}
  \end{figure}
  \begin{figure}
   \includegraphics[scale=0.5]{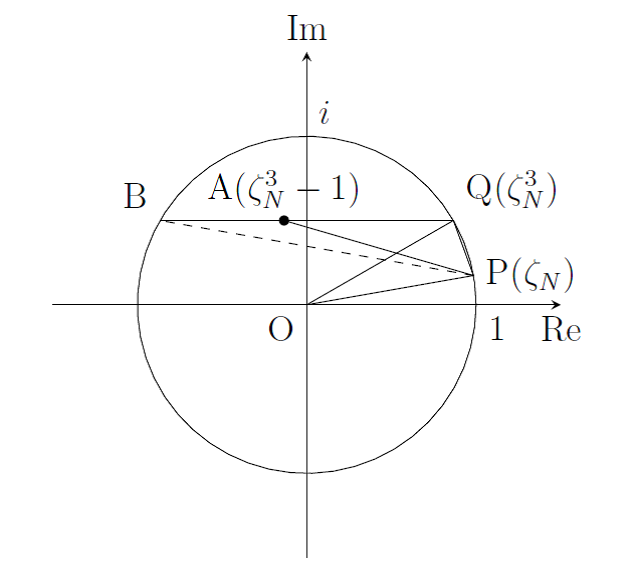}
   \caption{$18<N$}
   \label{figure3}
  \end{figure}
 \end{proof}
 \begin{remark}\label{Remark}
  For $0<t<2$, the equation
  \begin{align*}
   \frac{2\mathrm{sin}(\theta)\mathrm{cos}(2\theta)}{1+2\mathrm{sin}(\theta)\mathrm{sin}(2\theta)}=\mathrm{tan}(t\theta)
  \end{align*}
  has at most one root in the interval $\left(0, \frac{\pi}{4}\right)$ (see Appendix \ref{appendix b}) and so the way for searching the suitable $N\in\mathbb{Z}_{>0}$ is returned to substituting integers into the equations in the proof.
 \end{remark}
 \begin{remark}\label{equivalent condition2}
  By using the equation ($\ast\ast$) in the proof of the lemma, it can be said that
  \begin{align*}
   &Q_m(x)\text{ is divided by }\Phi_2(x)=x+1\quad\Longleftrightarrow\quad m\equiv1\ (\mathrm{mod}\ 2)\\
   &Q_m(x)\text{ is divided by }\Phi_8(x)=x^4+1\quad\Longleftrightarrow\quad m\equiv2\ (\mathrm{mod}\ 8)\\
   &Q_m(x)\text{ is divided by }\Phi_{12}(x)=x^4-x^2+1\quad\Longleftrightarrow\quad m\equiv1\ (\mathrm{mod}\ 12)\\
   &Q_m(x)\text{ is divided by }\Phi_{18}(x)=x^6-x^3+1\quad\Longleftrightarrow\quad m\equiv17\ (\mathrm{mod}\ 18)\\
   &Q_m(x)\text{ is divided by }\Phi_{30}(x)=x^8+x^7-x^5-x^4-x^3+x+1\quad\Longleftrightarrow\quad m\equiv24\ (\mathrm{mod}\ 30)
  \end{align*}
 \end{remark}
 \begin{lemma}\label{cyclotomic factor for P_n}
  The cyclotomic polynomials which divide $P_n(x)\in\mathbb{Z}[x]$ for some $n\geq10$ are only $\Phi_2(x), \Phi_3(x), \Phi_5(x), \Phi_8(x), \Phi_{12}(x), \Phi_{18}(x), \Phi_{30}(x)$.
 \end{lemma}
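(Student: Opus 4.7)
The approach is to adapt the proof of Lemma \ref{cyclotomic factor}. From $(x-1)P_n(x) = x^{n-2}(x^3-x-1) + (x^3+x^2-1)$, for $N \geq 2$ the divisibility $\Phi_N(x) \mid P_n(x)$ is equivalent to
\[ \zeta_N^{n-2} = -\frac{\zeta_N^3+\zeta_N^2-1}{\zeta_N^3-\zeta_N-1}, \]
which differs from the $Q_m$-equation in Lemma \ref{cyclotomic factor} only by the overall sign. Setting $w = 1+\zeta_N-\zeta_N^3$, one has $\zeta_N^3-\zeta_N-1 = -w$ and $\zeta_N^3+\zeta_N^2-1 = \zeta_N^3\overline{w}$, so the right-hand side equals $\zeta_N^3\overline{w}/w$, a unimodular number of argument $6\pi/N - 2\arg(w)$. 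Comparing with $\arg(\zeta_N^{n-2}) = 2\pi(n-2)/N$, the existence of $n \in \mathbb{Z}$ satisfying the equation reduces to $\arg(w)$ being an integer multiple of $\pi/N$---a different lattice from the one arising in the $Q_m$-setting (odd multiples of $\pi/(2N)$ for odd $N$), which is precisely why new exceptional values such as $\Phi_3$ and $\Phi_5$ appear.

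I would first dispose of $1 \leq N \leq 8$ by direct computation of $-(\zeta_N^3+\zeta_N^2-1)/(\zeta_N^3-\zeta_N-1)$, in parallel with the table in Lemma \ref{cyclotomic factor}. This value equals $-1,\ \zeta_3,\ \zeta_5^2,\ \zeta_8^3$ for $N=2,3,5,8$ respectively, lying in $\mu_N$ and yielding the admissible congruences $n$ odd, $n \equiv 0\,(\bmod\,3)$, $n \equiv 4\,(\bmod\,5)$, $n \equiv 5\,(\bmod\,8)$; for $N=4,6,7$ one obtains $(-4+3i)/5,\ (-11+5\sqrt{3}\,i)/14,\ (-3+\sqrt{7}\,i)/4$ respectively, none of which is an $N$-th root of unity. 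The case $N=1$ is excluded under the hypothesis $n \geq 10$: one computes $P_n(1) = 9-n$ (by polynomial division or by differentiating $(x-1)P_n(x)$ at $x=1$), which is nonzero.

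For $N \geq 9$, I would replay the inscribed-angle argument of Lemma \ref{cyclotomic factor} using Figures \ref{figure1}--\ref{figure3}, whose geometric setup depends only on the position of $W := 1+\zeta_N-\zeta_N^3$ relative to the unit circle. The $P_n$-condition $\arg(w) \in (\pi/N)\mathbb{Z}$ confines $\angle PAQ$ to $\{k\pi/N\}_{k \geq 1}$. Setting $\theta = 2\pi/N$: for $9 \leq N < 18$, Figure \ref{figure1} gives $\angle PAQ < \theta$, forcing $\angle PAQ = \theta/2$ and reducing matters to $\tfrac{2\sin\theta\cos 2\theta}{1+2\sin\theta\sin 2\theta}=\tan(\theta/2)$, whose unique solution in $(0,\pi/4)$ (by Remark \ref{Remark}) is $\theta = \pi/6$, i.e.\ $N=12$; for $N=18$, Figure \ref{figure2} forces $\angle PAQ = \theta$, itself an integer multiple of $\pi/18$, so $N=18$ is automatic; for $N > 18$, Figure \ref{figure3} gives $\theta < \angle PAQ < 2\theta$, forcing $\angle PAQ = 3\theta/2$ and reducing to $\tfrac{2\sin\theta\cos 2\theta}{1+2\sin\theta\sin 2\theta}=\tan(3\theta/2)$, with unique solution $\theta = \pi/15$, i.e.\ $N=30$. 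The main obstacle is this third step, since the $P_n$-sub-lattice of admissible angles differs from the $Q_m$-one and the resulting transcendental equations involve different multipliers of $\theta$; fortunately Remark \ref{Remark}'s uniqueness clause makes the verification direct. Assembling the three steps yields the claimed list $\{\Phi_2,\Phi_3,\Phi_5,\Phi_8,\Phi_{12},\Phi_{18},\Phi_{30}\}$.
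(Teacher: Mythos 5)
Your proposal is correct and takes essentially the same approach as the paper, which likewise says the proof proceeds as in Lemma \ref{cyclotomic factor} with the $\angle PAQ$ candidates changed from $\frac{k\pi}{2N}$ to $\frac{k\pi}{N}$ (together with a direct check for $1\leq N\leq 8$). Your version fills in the details the paper leaves implicit, including the correct observation that the reduced fraction is the negative reciprocal of the $Q_m$ one, the explicit values for $N=1$ through $8$ (with $P_n(1)=9-n$ handling $N=1$), and the case split for $N\geq 9$; all of these check out.
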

 \begin{proof}
  The proof is proceeding as the proof of Lemma \ref{cyclotomic factor}.
  For $1\leq N\leq8$, $N=2,3,5,8$ are only possible.
  Also for $N>8$, the only different point is the candidates for $\angle{PAQ}$ are changed to $\angle{PAQ}=\frac{\pi}{N}, \frac{2\pi}{N}, \frac{3\pi}{N}$ and $N=12,18,30$ are possible.
 \end{proof}
 \begin{remark}\label{equivalent condition1}
  As in Remark \ref{equivalent condition2}, it can be said that
  \begin{align*}
   &P_n(x)\text{ is divided by }\Phi_2(x)=x+1\quad\Longleftrightarrow\quad n\equiv1\ (\mathrm{mod}\ 2)\\
   &P_n(x)\text{ is divided by }\Phi_3(x)=x^2+x+1\quad\Longleftrightarrow\quad n\equiv0\ (\mathrm{mod}\ 3)\\
   &P_n(x)\text{ is divided by }\Phi_5(x)=x^4+x^3+x^2+x+1\quad\Longleftrightarrow\quad n\equiv4\ (\mathrm{mod}\ 5)\\
   &P_n(x)\text{ is divided by }\Phi_8(x)=x^4+1\quad\Longleftrightarrow\quad n\equiv5\ (\mathrm{mod}\ 8)\\
   &P_n(x)\text{ is divided by }\Phi_{12}(x)=x^4-x^2+1\quad\Longleftrightarrow\quad n\equiv6\ (\mathrm{mod}\ 12)\\
   &P_n(x)\text{ is divided by }\Phi_{18}(x)=x^6-x^3+1\quad\Longleftrightarrow\quad n\equiv7\ (\mathrm{mod}\ 18)\\
   &P_n(x)\text{ is divided by }\Phi_{30}(x)=x^8+x^7-x^5-x^4-x^3+x+1\quad\Longleftrightarrow\quad n\equiv8\ (\mathrm{mod}\ 30)
  \end{align*}
 \end{remark}
 \begin{remark}
  Similar results for Lemma \ref{cyclotomic factor for P_n} and Remark \ref{equivalent condition1} are stated in \cite[Theorem 1.1]{GHM09}.
 \end{remark}
 \begin{lemma}\label{simpliity of roots}
  For each $n\geq10$, the roots of $P_n(x)$ are all simple.
  Also, for each $m\geq4$, the roots of $Q_m(x)$ are all simple.
 \end{lemma}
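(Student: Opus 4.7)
The plan is to reduce simplicity of the roots to ruling out squared cyclotomic factors of $Q_m$ and $P_n$, then dispatch each admissible cyclotomic polynomial by direct computation.

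First I would set $f(x) := x^3-x-1$ and $g(x) := x^3+x^2-1$ and observe the identity $g(x) = -x^3 f(1/x)$, which gives $Q_m(x) = x^{m-3}f(x) + x^3 f(1/x)$, manifestly self-reciprocal of degree $m$. Similarly $(x-1)P_n(x) = x^{n-2}f(x) - x^3 f(1/x)$ is anti-reciprocal of degree $n+1$, so $P_n$ is self-reciprocal of degree $n$. Combining reciprocity with Lemma \ref{make Salem polynomials} (at most one root of modulus $>1$, counted with multiplicity) shows any multiple root $\alpha$ must satisfy $|\alpha|=1$: roots of modulus $>1$ are simple by the lemma, and by reciprocity so are those of modulus $<1$. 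If the minimal polynomial $\mu$ of such an $\alpha$ over $\mathbb{Q}$ were non-cyclotomic, then by Kronecker's theorem $\mu$ would contain some root off the unit circle, which would have to coincide with the unique modulus-$>1$ root or its reciprocal; then $\mu$ appearing with multiplicity $\geq 2$ in the polynomial would make this root a multiple root, contradicting Lemma \ref{make Salem polynomials}. Hence multiple roots can arise only from a squared cyclotomic factor $\Phi_N^2$ dividing $Q_m$ (or $P_n$).

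Next I would use Lemma \ref{cyclotomic factor} (resp.\ Lemma \ref{cyclotomic factor for P_n}) to restrict $N$ to $\{2,8,12,18,30\}$ for $Q_m$ and to $\{2,3,5,8,12,18,30\}$ for $P_n$, and Remark \ref{equivalent condition2} (resp.\ Remark \ref{equivalent condition1}) to pin down the residue class of $m$ (resp.\ $n$) for which $\Phi_N$ divides the polynomial at all. From the computed identity $f'(x)g(x) - f(x)g'(x) = x^4 + 2x^3 + x^2 + 2x + 1$ and eliminating $\alpha^{m-3}$ between the two equations $Q_m(\alpha)=0$ and $Q_m'(\alpha)=0$, one obtains the necessary condition
\[
(m-3)\, f(\alpha)\, g(\alpha) \;=\; -\alpha\,\bigl(\alpha^4 + 2\alpha^3 + \alpha^2 + 2\alpha + 1\bigr),
\]
which determines $m-3$ explicitly as an element of $\mathbb{Q}(\zeta_N)$ (this step is valid because neither $f$ nor $g$ has a root on the unit circle: the real roots are $\approx 1.3247$ and $\approx 0.7549$, and the complex conjugate pairs have moduli $\approx 0.87$ and $\approx 1.15$). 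The analogous relation with $n-2$ in place of $m-3$ governs double roots of $P_n$.

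Finally I would check each $N$ in turn. For $N=2$, $\alpha=-1$ gives $m-3=-1$ and $n-2=-1$, i.e., $m=2$ and $n=1$, both outside the allowed ranges. For $N=8$, a computation in $\mathbb{Q}(i,\sqrt{2})$ yields $m-3 = -5+4\sqrt{2}$, which is irrational, so no integer $m$ works. For $N=3, 5$ (relevant only to $P_n$), the analogous computations in $\mathbb{Q}(\zeta_3)$ and $\mathbb{Q}(\zeta_5)$ yield $n-2=-2$ (so $n=0$) and an inconsistent linear system in a $\mathbb{Q}$-basis of $\mathbb{Q}(\zeta_5)$, respectively. The remaining cases $N\in\{12,18,30\}$ are handled identically in cyclotomic fields of degrees $4$, $6$, $8$: in each case the explicit right-hand side is either irrational or a rational integer lying outside the required residue class. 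The hard part will be the sheer volume of symbolic work for $N\in\{18,30\}$, but since every case reduces to a finite linear-algebra check for rationality and residue-class membership, the verification is completely mechanical.
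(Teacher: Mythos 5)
Your strategy is correct and is essentially an explicit unpacking of the paper's (very terse) proof: Lemma \ref{make Salem polynomials} together with the self-reciprocity of $Q_m$ and $P_n$ rules out multiple roots off the unit circle; Kronecker confines potential unit-circle multiple roots to cyclotomic factors; Lemmas \ref{cyclotomic factor} and \ref{cyclotomic factor for P_n} restrict the admissible $N$; and eliminating $\alpha^{m-3}$ from $Q_m(\alpha)=Q_m'(\alpha)=0$ gives the necessary condition
$(m-3)f(\alpha)g(\alpha)=-\alpha(\alpha^4+2\alpha^3+\alpha^2+2\alpha+1)$
(with $n-2$ in place of $m-3$ for $P_n$). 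I verified the key ingredients: $f'g-fg'=x^4+2x^3+x^2+2x+1$; $m-3=-5+4\sqrt{2}$ at $N=8$; $n=1$ at $N=2$ and $n=0$ at $N=3$; and at $N=5$ the two nonzero coordinates in the $\{1,\zeta_5,\zeta_5^2,\zeta_5^3\}$ basis force $n-2$ to be simultaneously $\tfrac13$ and $-\tfrac12$, which is indeed inconsistent. The remark that $f$ and $g$ have no roots on the unit circle (so the division by $f(\alpha)g(\alpha)$ is legal) is also right.

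The one real shortfall is that the proof is not actually finished: the cases $N\in\{12,18,30\}$, which live in cyclotomic fields of degree $4$, $6$, $8$ and constitute the bulk of the work, are only asserted to be ``completely mechanical,'' not carried out. The elimination identity determines $m$ (resp.\ $n$) only as an element of $\mathbb{Q}(\zeta_N)$; you must still exhibit that this element is irrational, or a rational integer outside $\{m\geq4\}$ (resp.\ $\{n\geq10\}$), or incompatible with the residue-class conditions of Remarks \ref{equivalent condition1} and \ref{equivalent condition2}. This is a finite computation but it is exactly where the content of the lemma lies, so it needs to be written out (or delegated to a cited computation). The paper's own proof is equally terse here --- ``the simplicity of the other roots can be checked'' with no computation shown --- so your proposal matches the paper's approach and level of detail, but a complete, self-contained argument requires filling in those three cases.
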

 \begin{proof}
  Every $P_n(x)$ and $Q_m(x)$ has at most one root of modulus larger than $1$, so this root and its conjugates are all simple.
  The simplicity of the other roots can be checked by using Remark \ref{equivalent condition1} and Remark \ref{equivalent condition2} for each cyclotomic polynomial.
 \end{proof}
 \begin{remark}
  Assume $n\geq10$ and $m\geq4$.
  Dividing $P_n(x)$ (resp. $Q_m(x)$) by its cyclotomic factor as many as possible, the remaining polynomial is constant or the polynomial which has only one root of modulus larger than $1$ by Lemma \ref{make Salem polynomials}.\par
  By Lemma \ref{simpliity of roots}, the dividing operation is ended at most $7$ times (resp. $5$ times) and the degree decreases at most $29$ (resp. $23$).
  Thus, by Table \ref{appendix 1} and Table \ref{appendix 2} in Appendix \ref{appendix a}, the remaining polynomials have degree at least $4$ and so these are Salem polynomials.
  Write $P_n(x)=C_n(x)S_n(x)$ (resp. $Q_m(x)=C'_m(x)S'_m(x)$) where $C_n(x)$ (resp. $C'_m(x)$) is a product of cyclotomic polynomials and $S_n(x)$ (resp. $S'_m(x)$) is a Salem polynomial. 
 \end{remark}
 \begin{lemma}\label{convergence lemma}
  Assume $n\geq10$ and $m\geq4$.
  Let $s_n$ be a Salem number whose minimal polynomial is $S_n(x)$ and let $s'_m$ be a Salem number whose minimal polynomial is $S'_m(x)$.
  Then, the sequence $\{s_n\}_{n\geq10}$ strictly increasing and the sequence $\{s'_m\}_{m\geq4}$ strictly decreasing.
  Also, the limit of these sequence is the same and it is $1.3247\cdots$, the smallest Pisot number.
 \end{lemma}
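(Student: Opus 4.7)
The plan is to use the smallest Pisot number $\theta$, i.e., the unique real root of $x^{3}-x-1=0$ (which satisfies $\theta^{3}=\theta+1$ and $\theta\approx 1.3247$), as a comparison point. The strategy is: first, locate $s_{n}$ and $s'_{m}$ on opposite sides of $\theta$; second, compare $P_{n+1}(s_{n})$ and $Q_{m+1}(s'_{m})$ against $P_{n+1}(\theta)$ and $Q_{m+1}(\theta)$ to obtain monotonicity; third, take limits using the defining relations.

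For the location step, at $x=\theta$ the factor $x^{3}-x-1$ vanishes, so
\[
 P_{n}(\theta)(\theta-1)=\theta^{3}+\theta^{2}-1=\theta^{2}+\theta>0, \qquad Q_{m}(\theta)=-(\theta^{3}+\theta^{2}-1)=-(\theta^{2}+\theta)<0,
\]
hence $P_{n}(\theta)>0$ and $Q_{m}(\theta)<0$. Applying L'H\^{o}pital's rule at the removable singularity $x=1$ of $P_{n}$ I would obtain $P_{n}(1)=9-n<0$ for $n\geq 10$, while clearly $Q_{m}(x)\to+\infty$ as $x\to+\infty$. By Lemma \ref{make Salem polynomials}, each of $P_{n}$ and $Q_{m}$ has a unique root of modulus $>1$ and it is real, and by definition of a Salem number this root is exactly $s_{n}$ (respectively $s'_{m}$). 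The intermediate value theorem then forces $s_{n}\in(1,\theta)$ and $s'_{m}\in(\theta,+\infty)$.

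For monotonicity, the identity $P_{n}(s_{n})=0$ rearranges as
\[
 s_{n}^{n-2}(s_{n}^{3}-s_{n}-1)=-(s_{n}^{3}+s_{n}^{2}-1).
\]
Multiplying by $s_{n}$ and substituting back into the defining expression for $P_{n+1}(s_{n})(s_{n}-1)$ yields, after cancellation, $P_{n+1}(s_{n})=-(s_{n}^{3}+s_{n}^{2}-1)<0$. Combined with $P_{n+1}(\theta)>0$ and the uniqueness of a root of modulus $>1$ for $P_{n+1}$, this forces $s_{n}<s_{n+1}<\theta$. An entirely parallel manipulation starting from $(s'_{m})^{m-3}((s'_{m})^{3}-s'_{m}-1)=(s'_{m})^{3}+(s'_{m})^{2}-1$ produces $Q_{m+1}(s'_{m})=(s'_{m}-1)((s'_{m})^{3}+(s'_{m})^{2}-1)>0$, which combined with $Q_{m+1}(\theta)<0$ gives $\theta<s'_{m+1}<s'_{m}$.

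Finally, both sequences are monotone and bounded, hence convergent. If $L:=\lim s_{n}$ were strictly less than $\theta$, then $L^{3}-L-1\neq 0$ while $s_{n}\geq s_{10}>1$ forces $s_{n}^{n-2}\to+\infty$, so the left-hand side of the identity above would blow up while the right-hand side stays bounded, a contradiction; hence $L=\theta$. The same argument handles $\{s'_{m}\}$. The one point I expect to require the most care is the identification of the sign-change root found by IVT with the Salem number $s_{n}$ (resp.\ $s'_{m}$); this rests on the uniqueness of a modulus-greater-than-one root of $P_{n}$ and $Q_{m}$ from Lemma \ref{make Salem polynomials}, together with the palindromic structure forbidding spurious real roots outside the unit circle. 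Once that identification is secured, the remainder of the argument is straightforward algebra on the recursion.
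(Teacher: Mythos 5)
Your proposal is correct, and the argument itself is sound. The paper's own ``proof'' of this lemma consists of a single sentence pointing to \cite[Theorem 6.4.1]{BDGPS92}: the polynomials $x^{n-2}(x^3-x-1)\pm(x^3+x^2-1)$ are precisely Salem's construction $x^kP(x)\pm P^*(x)$ applied to the minimal Pisot polynomial $P(x)=x^3-x-1$ (with reciprocal $P^*(x)=x^3+x^2-1$), so the monotone convergence of the Salem roots to the Pisot number $\theta$ is the content of that textbook theorem. What you have written out is essentially a self-contained re-derivation of that theorem specialized to this particular $P$, whereas the paper simply cites it.

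Your version is accurate in every step I checked: the L'H\^opital evaluation $P_n(1)=9-n<0$ is right (differentiate the numerator and evaluate at $1$); the telescoping identities $(s_n-1)P_{n+1}(s_n)=(1-s_n)(s_n^3+s_n^2-1)$ and $Q_{m+1}(s'_m)=(s'_m-1)\bigl((s'_m)^3+(s'_m)^2-1\bigr)$ follow cleanly from multiplying the vanishing relation by $s_n$ (resp.~$s'_m$); the identification of the IVT root with $s_n$ (resp.~$s'_m$) is justified because the cyclotomic factors contribute only roots on the unit circle, so by Lemma \ref{make Salem polynomials} the unique root of $P_n$ (resp.~$Q_m$) of modulus $>1$ is exactly the Salem number of $S_n$ (resp.~$S'_m$); and the limit argument by contradiction (if $\lim s_n<\theta$ then $s_n^{n-2}(s_n^3-s_n-1)\to-\infty$ while the right side stays bounded) is standard and correct, using $s_n\geq s_{10}>1$ for the divergence of $s_n^{n-2}$. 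The only cosmetic remark: you should note explicitly that the monotonicity step is legitimate because $n+1\geq 11$ and $m+1\geq 5$ keep you inside the range where $S_{n+1}$ and $S'_{m+1}$ are genuine Salem polynomials (the paper's Remark before this lemma guarantees this for $n\geq10$, $m\geq4$). That aside, your argument gives a complete proof where the paper gives only a reference.
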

 \begin{proof}
  $1.3247\cdots$ is the real root of $x^3-x-1\in\mathbb{Z}[x]$ and this result is proved as in the proof of \cite[Theorem 6.4.1]{BDGPS92}.
 \end{proof}
 For proceeding the proof, we need the next notation.
 \begin{definition}
  For a Salem polynomial $S(x)$ of degree $2d$, the \textit{trace polynomial} of $S(x)$ is a polynomial $T(x)$ of degree $d$ such that $S(x)=x^dT(x+\frac{1}{x})$. 
 \end{definition}
 \begin{remark}
  By the definition of Salem numbers, the roots of $T(x)$ are all real and it has only one root outside of the interval $(-2,2)$.
  Thus, for proving Proposition \ref{Main Proposition}, we need a Salem polynomial $S(x)$ of degree $2p$ and the associated trace polynomial with constant term $\pm1$.
 \end{remark}
 \begin{lemma}\label{constant term}
  For a Salem polynomial $S(x)$ and its associated trace polynomial $T(x)$, the constant term of $T(x)$ is $\pm1$ if and only if $\abs{S(i)}=1$.
 \end{lemma}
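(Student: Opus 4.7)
The plan is to evaluate $S$ at $x=i$ directly using the defining relation $S(x)=x^{d}T(x+\tfrac{1}{x})$ and observe the remarkable cancellation that occurs at this specific point.

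First I would compute $i + \tfrac{1}{i}$. Since $\tfrac{1}{i} = -i$, we have $i + \tfrac{1}{i} = 0$. Substituting $x = i$ into the identity $S(x) = x^{d}T(x+\tfrac{1}{x})$ therefore gives
\begin{align*}
 S(i) = i^{d}\, T(0).
\end{align*}
Taking absolute values and using $\abs{i^{d}} = 1$, this yields $\abs{S(i)} = \abs{T(0)}$.

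Next I would recall that $T(x) \in \mathbb{Z}[x]$ (which follows from $S(x) \in \mathbb{Z}[x]$ being reciprocal of even degree, so that the substitution $x+\tfrac{1}{x}$ produces integer coefficients; this is the standard construction also used in Lemma \ref{cyclotomic}), hence $T(0) \in \mathbb{Z}$. Consequently $\abs{T(0)} = 1$ if and only if $T(0) = \pm 1$, which is precisely the statement that the constant term of $T(x)$ is $\pm 1$. Combined with $\abs{S(i)} = \abs{T(0)}$, this gives the desired equivalence in both directions.

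There is essentially no obstacle here: the proof reduces to the single observation that $i$ and $-i$ are the two complex numbers whose sum equals $0$, so that substitution into the Chebyshev-type relation between $S$ and $T$ collapses the argument of $T$ to the origin. The integrality of $T(0)$ is what lets us upgrade $\abs{T(0)} = 1$ to $T(0) = \pm 1$.
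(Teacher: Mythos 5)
Your proof is correct and follows essentially the same route as the paper: substitute $x=i$ into $S(x)=x^{d}T(x+\tfrac{1}{x})$, use $i+\tfrac{1}{i}=0$ to get $\abs{S(i)}=\abs{T(0)}$, and invoke integrality of $T(0)$ to pass from $\abs{T(0)}=1$ to $T(0)=\pm1$. The paper simply states this chain of equivalences as immediate; you have filled in the same steps explicitly.
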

 \begin{proof}
  By the relation $S(x)=x^dT(x+\frac{1}{x})$, it is immediate that
  \begin{align*}
   \text{the constant term of $T(x)$ is $\pm1$}\ \Longleftrightarrow\ T(0)=\pm1\Longleftrightarrow\ \abs{S(i)}=1
  \end{align*}
 \end{proof}
 For $n\geq10$ and $m\geq4$, let $T_n(x)$ be the trace polynomial of $S_n(x)$ and let $T'_m(x)$ be the trace polynomial of $S'_m(x)$.
 By using Remark \ref{equivalent condition1} and Remark \ref{equivalent condition2}, we get the condition for that when the constant term of $T_n(x)$ or $T'_m(x)$ is $\pm1$.\\
 \begin{lemma}\label{condition for constant term}
  Assume $n\geq10$ and $m\geq4$.
  Then,
  \begin{align*}
   \text{the constant term of $T_n(x)$ is $\pm1$}\quad\Longleftrightarrow\quad n\equiv0,3,4,5,6,&7,8,11,12,13,15,\\
                                                                                                &16,18,19,20,21,23\ (\mathrm{mod}\ 24)
  \end{align*}
  \begin{align*}
   \text{the constant term of $T'_m(x)$ is $\pm1$}\quad\Longleftrightarrow\quad m\equiv1,2,3,7,10,11,13,15,18,19,23\ (\mathrm{mod}\ 24)
  \end{align*}
 \end{lemma}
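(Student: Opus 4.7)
\medskip\noindent
\textbf{Proof plan.}
The strategy is to invoke Lemma~\ref{constant term}, which reduces the question ``is the constant term of $T_n$ (resp.\ $T'_m$) equal to $\pm 1$?'' to the numerical condition $|S_n(i)|=1$ (resp.\ $|S'_m(i)|=1$). Writing $P_n=C_n S_n$ and $Q_m=C'_m S'_m$, this is equivalent to the equalities $|P_n(i)|=|C_n(i)|$ and $|Q_m(i)|=|C'_m(i)|$. The plan is therefore to compute both sides in closed form and compare.

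First I would substitute $x=i$ into the defining formulas of $P_n$ and $Q_m$. Using $i^3-i-1=-2i-1$ and $i^3+i^2-1=-i-2$, both of modulus $\sqrt{5}$, a short calculation shows that $|P_n(i)|$ takes the values $1,\ 2\sqrt{2},\ 3,\ \sqrt{2}$ according as $n\equiv 0,1,2,3\pmod{4}$, and $|Q_m(i)|$ takes the values $4,\ 3\sqrt{2},\ 2,\ \sqrt{2}$ according as $m\equiv 0,1,2,3\pmod{4}$.

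Next I would evaluate $|\Phi_N(i)|$ for each of the seven cyclotomic polynomials listed in Lemmas~\ref{cyclotomic factor for P_n} and~\ref{cyclotomic factor}. The key observation is that $|\Phi_3(i)|=|\Phi_5(i)|=|\Phi_{18}(i)|=|\Phi_{30}(i)|=1$, so these four factors are ``invisible'' at $x=i$ and do not affect the equality; only $|\Phi_2(i)|=\sqrt{2}$, $|\Phi_8(i)|=2$, $|\Phi_{12}(i)|=3$ contribute. By Remarks~\ref{equivalent condition1} and~\ref{equivalent condition2}, divisibility by $\Phi_2,\Phi_8,\Phi_{12}$ depends on $n$ (respectively $m$) modulo $2,\,8,\,12$, and $\mathrm{lcm}(2,8,12)=24$. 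This is the step that explains the appearance of $24$ in the statement and collapses an a priori period of $\mathrm{lcm}(2,3,5,8,12,18,30)=360$ down to $24$; I expect this to be the main conceptual obstacle, since without it the enumeration would be impractically large.

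The remaining step is a direct enumeration of the $24$ residue classes: for each class, compute $|C_n(i)|$ (resp.\ $|C'_m(i)|$) as the product of the contributing $|\Phi_N(i)|$ and compare with $|P_n(i)|$ (resp.\ $|Q_m(i)|$). The residues for which equality holds yield precisely the stated lists, and the cases where equality fails give a nonzero but bounded ratio, confirming that they are to be excluded. This final bookkeeping is routine arithmetic once the collapse to modulus~$24$ is established.
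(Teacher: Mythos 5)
Your proposal is correct and takes essentially the same route as the paper: invoke Lemma~\ref{constant term}, evaluate $\abs{\Phi_N(i)}$ for the seven relevant cyclotomic polynomials and $\abs{P_n(i)}$, $\abs{Q_m(i)}$ as functions of $n,m\bmod 4$, then use Remarks~\ref{equivalent condition1} and~\ref{equivalent condition2} to reduce to residues modulo $24$ and enumerate. You are somewhat more explicit than the paper about \emph{why} the period collapses to $24=\mathrm{lcm}(4,2,8,12)$ (namely that $\Phi_3,\Phi_5,\Phi_{18},\Phi_{30}$ are invisible at $x=i$), but this is the same argument the paper is compressing into ``cyclicity of order $24$ can be found.''
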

 \begin{proof}
  By calculating,
  \begin{align*}
   \abs{\Phi_2(i)}=\sqrt{2}, \abs{\Phi_3(i)}=1, \abs{\Phi_5(i)}=1, \abs{\Phi_8(i)}=2, \abs{\Phi_{12}(i)}=3, \abs{\Phi_{18}(i)}=1, \abs{\Phi_{30}(i)}=1, 
  \end{align*}
  \begin{align*}
   \abs{P_n(i)}=\left\{
                 \begin{array}{ll}
                  1 & (n\equiv0\ (\mathrm{mod}\ 4))\\
                  2\sqrt{2} & (n\equiv1\ (\mathrm{mod}\ 4))\\
                  3 & (n\equiv2\ (\mathrm{mod}\ 4))\\
                  \sqrt{2} & (n\equiv3\ (\mathrm{mod}\ 4))
                 \end{array},
                \right.
   \abs{Q_m(i)}=\left\{
                 \begin{array}{ll}
                  4 & (n\equiv0\ (\mathrm{mod}\ 4))\\
                  3\sqrt{2} & (n\equiv1\ (\mathrm{mod}\ 4))\\
                  2 & (n\equiv2\ (\mathrm{mod}\ 4))\\
                  \sqrt{2} & (n\equiv3\ (\mathrm{mod}\ 4))
                 \end{array}.
                \right.
  \end{align*}
  By Lemma \ref{constant term}, it is enough to calculate $\abs{S(i)}$ and so by using Remark \ref{equivalent condition1} and Remark \ref{equivalent condition2}, cyclicity of order $24$ can be found and the proof is done. 
 \end{proof}
 In Appendix \ref{appendix a}, Table \ref{appendix 3} is enumerating the degree of $S_n(x)$ for $n$, which satisfies the condition in Lemma \ref{condition for constant term}.
 Table \ref{appendix 4} is enumerating the degree of $S'_m(x)$ for $m$, which satisfies the condition in Lemma \ref{condition for constant term}.
 Both have cyclicity of order $360$ and so only $10\leq n\leq373$ and $7\leq m\leq378$ are filled in.
 \begin{proof}[Proof of Proposition \ref{Main Proposition}]
  Now all the double of an odd integer, larger than $3$, would be appeared as the degree of $S_n(x)$ in Table \ref{appendix 3} or $S'_m(x)$ in Table \ref{appendix 4} by the cyclicity.
  Thus, for a prime number $p>3$, $2p$ is appeared as the degree of $S_n(x)$ or $S'_m(x)$ with the conditions in Lemma \ref{condition for constant term}.\par
  If $2p$ is appeared as the degree of $S_n(x)$ for some $n$, then the associated trace polynomial $T_n(x)$ has degree $p$ and its roots are all totally real algebraic intregers.\par
  By Lemma \ref{convergence lemma}, the maximal absolute value of the roots of $S_n(x)$ is less than $1.3247\cdots$ and so the maximal absolute value of the roots of $T_n(x)$ is less than $1.3247\cdots+\frac{1}{1.3247\cdots}=2.0795\cdots<2.08$.\par
  The first $2p$, which is not appeared as the degree of $S_n(x)$ is $26$ and it is appeared as the degree of $S'_{27}(x)$ and the associated trace polynomial $T'_{27}(x)$ has degree 13.
  By Lemma \ref{convergence lemma} again, the maximal absolute value of $S'_m(x)$ is strictly decreasing, and so it suffices to show that the maximal absolute value of $T'_{27}(x)$ is less than $2.08$.
  The Salem number for $S'_{27}(x)$ is $1.3255\cdots$ and so the maximal absolute value of $T'_{27}(x)$ is $1.3255\cdots+\frac{1}{1.3255\cdots}=2.0799\cdots<2.08$.
  Thus, the proof is concluded.
 \end{proof}
 \begin{proof}[Proof of Theorem \ref{Result1}]
  Theorem \ref{Result1} is concluded by composing Proposition \ref{case=2}, Proposition \ref{case for Sophie Germain prime}, Proposition \ref{case otherwise} and Proposition \ref{Main Proposition}.
 \end{proof}

\section{The minimum value of the first dynamical degrees for simple abelian varieties with lower dimensions}\label{The minimum value of the first dynamical degrees for simple abelian varieties with lower dimensions}
 In this section, we calculate the minimum value of the first dynamical degrees except $1$ with fixing the dimension of simple abelian varieties $g=2$ to $10$.
 The result is the following.
 \begin{table}[hbtp]
   \label{table}
   \begin{tabular}{|c|c|}
    \hline 
    dimension $g$ & minimum value \\
    \hline
    $1$ & $(\text{all }1)$\\
    $2$ & $4\mathrm{cos}^2(\pi/5)=2.6180\cdots$ \\
    $3$ & $4\mathrm{cos}^2(\pi/7)=3.2469\cdots$ \\
    $4$ & $2\mathrm{cos}(\pi/5)=1.6180\cdots$ \\
    $5$ & $4\mathrm{cos}^2(\pi/11)=3.6825\cdots$ \\
    $6$ & $2\mathrm{cos}(\pi/7)=1.8019\cdots$ \\
    $7$ & $4.0333\cdots$\footnotemark[1] \\
    $8$ & $2\mathrm{cos}(\pi/5)=1.6180\cdots$ \\
    $9$ & $1.1509\cdots^2=1.3247\cdots$\footnotemark[2] \\
    $10$& $1.1762\cdots^2=1.3836\cdots$\footnotemark[3] \\
    \hline
   \end{tabular}
  \end{table}
 \footnotetext[1]{$4.0333\cdots$ is the maximal absolute value of the roots of $x^7-14x^6+77x^5-211x^4+301x^3-210x^2+56x-1$}
 \footnotetext[2]{$1.1509\cdots$ is the absolute value of a complex root of $x^3-x^2+1$}
 \footnotetext[3]{$1.1762\cdots$ is the Lehmer number}
 \begin{remark}
  If $g=1$, then the first dynamical degree of an automorphism is the last dynamical degree and it is always $1$.\par
  The cases $g=2,3,5$ are already done and the result is compatible with Theorem \ref{Result1}.\par
  We use a computer algebra for $g=6$ to $10$.
  The calculation would be proceeding as dividing into the type of simple abelian varieties.
 \end{remark}

 \subsection{dimension $g=4$}
  Let $X$ be a $4$-dimensional simple abelian variety and define $B$, $K$, $K_0$, $d$, $e$ and $e_0$ as in Section \ref{Endomorphism algebras of simple abelian varieties}.
  \begin{flushleft}{\bf{Type 1}}\end{flushleft}\par
   Let $X$ be a $4$-dimensional simple abelian variety of Type 1 and $f\in\mathrm{End}(X)$ be an automorphism of $X$ which corresponds to $\alpha\in U_K$ as in Section \ref{The minimum value of the first dynamical degrees of automorphisms}.
   Now $[K:\mathbb{Q}]=1$ or $2$ or $4$ by the restriction in Table \ref{table1}.\par
   Assuming that the first dynamical degree is less than $4$, the conjugates of $\alpha$ are all in the interval $(-2,2)$.
   If the degree of $\alpha$ is $1$, then $\alpha=\pm1$ and so the first dynamical degree is $1$ and so we assume the degree of $\alpha$ is $2$ or $4$.
   Denote the minimal polynomial of $\alpha$ over $\mathbb{Q}$ by $P(x)$, whose degree is $r=2$ or $4$.
   By Lemma \ref{cyclotomic}, $x^rP(x+\frac{1}{x})$ is an irreducible cyclotomic polynomial and its degree is $2r=4$ or $8$.
   Cyclotomic polynomials $\Phi_N(x)$ have degree $4$ or $8$ only for $N=5,8,10,12,15,16,20,24,30$.
   By comparing with Proposition \ref{constant term of minimal polynomial}, since $P(x)$ has constant term $\pm1$, $N=5,10,15,24,30$ are only possible.
   Thus, the minimum value of the maximal absolute value of the roots of $P(x)$ is $2\mathrm{cos}\left(\frac{\pi}{5}\right)$ except $1$.
   The realizability is concerned in Section \ref{The minimum value of the first dynamical degrees of automorphisms}.
   Thus, the minimum value of the first dynamical degrees is $4\mathrm{cos}^2\left(\frac{\pi}{5}\right)=\left(\frac{1+\sqrt{5}}{2}\right)^2$ except $1$ for this type.
  \begin{flushleft}{\bf{Type 2, Type 3}}\end{flushleft}\par
   Let $X$ be a $4$-dimensional simple abelian variety of Type 2 or 3 and $f\in\mathrm{End}(X)$ be an automorphism of $X$ which corresponds to $\alpha\in B$.
   Now $[K:\mathbb{Q}]=1$ or $2$ by the restriction in Table \ref{table1}.\par
   Denote the minimal polynomial of $\alpha$ over $K$ by $p(x)$ and the minimal polynomial of $\alpha$ over $\mathbb{Q}$ by $P(x)$.
   If the degree of $p(x)$ is $1$, then $\alpha\in U_K$ and so it is reduced to Type 1 of $2$-dimensional case.
   Thus, $\frac{3+\sqrt{5}}{2}$ is a lower bound except $1$ for this case.\par
   If the degree of $p(x)$ is $2$, then it is written as $p(x)=x^2+ax+b\in\mathcal{O}_K[x](b\in U_K)$ and consider the following 3 cases as in Section \ref{The minimum value of the first dynamical degrees of automorphisms}.
   Let $\{\sigma_i\}$ be the set of all $\mathbb{Q}$-embeddings $K\hookrightarrow\mathbb{C}$.\\
   $\bullet$ there exists $1\leq i\leq e$ such that $\abs{\sigma_i(b)}>1$\par
    The maximal absolute value of all conjugates of $b\in U_K$ is at least $\frac{1+\sqrt{5}}{2}$ and so the maximal absolute value of the roots of $P(x)$ is at least $\sqrt{\frac{1+\sqrt{5}}{2}}$.
    This is achived by $p(x)=x^2+\frac{1+\sqrt{5}}{2}$ and $P(x)=x^4+x^2-1$.
    Thus, $\frac{1+\sqrt{5}}{2}$ is a lower bound except $1$ for this case.\\
   $\bullet$ $\sigma_i(b)=1$ for all $1\leq i\leq e$\par
    Since $\mathrm{max}\{\abs{\sigma_i(a)}\}$ must be larger than $2$ and close to $2$, $a=\sqrt{5}$ provides the minimum value of the maximal absolute value by some calculation.
    For this case, the maximal absolute value of the roots of $P(x)=x^4-3x^2+1$ is $\frac{1+\sqrt{5}}{2}$ and $\left(\frac{1+\sqrt{5}}{2}\right)^2$ is a lower bound except $1$ for this case.\\
   $\bullet$ $\sigma_i(b)=-1$ for all $1\leq i\leq e$\par
    $a=1$ provides the minimum value of the first dynamical degrees as in Section \ref{The minimum value of the first dynamical degrees of automorphisms}.
    Thus, $\left(\frac{1+\sqrt{5}}{2}\right)^2$ is a lower bound except $1$ for this case.\par
   Thus, $\frac{1+\sqrt{5}}{2}$ is the minimum value of the first dynamical degrees except $1$ for these types by the next theorem.
   \begin{theorem}\label{4-dimensional construction}
    Define $\alpha=\sqrt{-\frac{1+\sqrt{5}}{2}}$.
    There exists a prime number $p$ where $B:=\left(\frac{\alpha^2, p}{\mathbb{Q}(\sqrt{5})}\right)$ is a divisional totally indefinite quaternion algebra by Theorem \ref{divisional}.
    On this condition, there is a positive anti-involotion on $B$ over $\mathbb{Q}$.
    Moreover, on the case \textnormal{Type 2 ($d=2$, $e=2$, $m=1$)}, there is an automorphism of a $4$-dimensional simple abelian vaeriety which correponds to $\alpha=\sqrt{-\frac{1+\sqrt{5}}{2}}$.
   \end{theorem}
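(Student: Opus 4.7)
The plan is to produce the $4$-dimensional simple abelian variety via Proposition \ref{construction for Type 2}, verifying its hypotheses in four steps: obtain a divisional totally indefinite quaternion algebra $B$, equip it with a positive anti-involution, apply Proposition \ref{construction for Type 2} with $e=2$, $m=1$, and check that the endomorphism corresponding to $\alpha$ is in fact an automorphism.

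First I would apply Theorem \ref{divisional} with $F=\mathbb{Q}(\sqrt{5})$ and $a=\alpha^2=-\tfrac{1+\sqrt{5}}{2}\in\mathcal{O}_F$ (note $F(\sqrt{a})=\mathbb{Q}(\alpha)$ is genuinely quadratic over $F$ since $\alpha\notin\mathbb{R}\supset F$) to obtain a prime $p$ such that $B=\left(\tfrac{a,p}{F}\right)$ is divisional. Since $\sigma(p)=p>0$ for both real embeddings of $F$, each completion $B\otimes_F\mathbb{R}$ contains a positive square root and is therefore isomorphic to $M_2(\mathbb{R})$, so $B$ is automatically totally indefinite.

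Next I would produce a positive anti-involution by invoking Theorem \ref{construction of positive anti-involution}: it suffices to exhibit $c\in B\setminus K$ with $c^2\in K$ totally negative. Writing the standard generators $i,j$ of $B$ with $i^2=a$, $j^2=p$, $ij=-ji$, I would try $c=\lambda i+\nu(ij)$ with $\lambda,\nu\in K$; a direct computation gives
\begin{equation*}
 c^2=a\bigl(\lambda^2-p\nu^2\bigr)\in K.
\end{equation*}
The two real embeddings of $K$ send $a$ to $-\tfrac{1+\sqrt{5}}{2}<0$ and $-\tfrac{1-\sqrt{5}}{2}>0$ respectively, so total negativity of $c^2$ reduces to finding $\mu=\lambda/\nu\in K^{\times}$ with $\sigma_1(\mu)^2>p>\sigma_2(\mu)^2$. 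Taking $\nu=1$ and $\lambda=\epsilon^n$ where $\epsilon=\tfrac{1+\sqrt{5}}{2}$ is the fundamental unit, one has $\sigma_1(\lambda)^2=\epsilon^{2n}\to\infty$ and $\sigma_2(\lambda)^2=\epsilon^{-2n}\to 0$, so any sufficiently large $n$ works; this supplies $c$ and thus the required positive anti-involution on $B$. This step, the construction of $c$, is the main technical obstacle, but as sketched it is solved by Dirichlet-style use of the fundamental unit.

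Finally, I would fix a $\mathbb{Z}$-order $\mathcal{O}\subset B$ containing $\alpha$, e.g.\ $\mathcal{O}=\mathcal{O}_K[\alpha,j]$, and apply Proposition \ref{construction for Type 2} with this $B$, $\mathcal{O}$ and $m=1$ to obtain a $2em=4$-dimensional simple abelian variety $X$ with an isomorphism $B\xrightarrow{\sim}\mathrm{End}_\mathbb{Q}(X)$ restricting to an embedding $\mathcal{O}\hookrightarrow\mathrm{End}(X)$. The element $\alpha\in\mathcal{O}$ thus corresponds to an endomorphism $f\colon X\to X$; to see that $f$ is an automorphism, note
\begin{equation*}
 \alpha^{-1}=\frac{\alpha}{\alpha^2}=-\frac{2}{1+\sqrt{5}}\alpha=\frac{\sqrt{5}-1}{2}\alpha=(\epsilon-1)\alpha\in\mathcal{O}_K\cdot\alpha\subset\mathcal{O},
\end{equation*}
so $\alpha$ is a unit in $\mathcal{O}$ and $f$ is an automorphism. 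The minimal polynomial of $\alpha$ over $\mathbb{Q}$ is $P(x)=x^4+x^2-1$, whose roots have maximal modulus $\sqrt{\tfrac{1+\sqrt{5}}{2}}$, so the first dynamical degree of $f$ equals $\tfrac{1+\sqrt{5}}{2}=2\cos(\pi/5)$, completing the argument.
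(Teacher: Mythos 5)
Your proposal is correct and follows essentially the same route as the paper: apply Theorem \ref{divisional} to get a divisional quaternion algebra, exhibit $c=xi+yij$ with $y=1$ and $x$ a large element of $\mathcal{O}_K$ so that $c^2=\alpha^2(x^2-p)$ is totally negative, invoke Theorem \ref{construction of positive anti-involution}, and then feed the order into Proposition \ref{construction for Type 2}. The only difference is cosmetic: the paper produces $x$ from an explicit Pell solution $u+w\sqrt{5}$ with $u^2-5w^2=1$ and $u,w>p$, while you take $x=\epsilon^n$ for $n$ large; both exploit that the two real embeddings of a unit can be made respectively very large and very small. One small slip at the end: since $\alpha^2=-\frac{1+\sqrt{5}}{2}=-\epsilon$, one has $\alpha^{-1}=\frac{1-\sqrt{5}}{2}\,\alpha=(1-\epsilon)\alpha$, not $(\epsilon-1)\alpha$; the sign is off but $\alpha^{-1}$ still lies in $\mathcal{O}_K\cdot\alpha\subset\mathcal{O}$, so the conclusion is unaffected.
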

   \begin{proof}
    By Theorem \ref{construction of positive anti-involution}, it is enough to find $c\in B\setminus\mathbb{Q}(\sqrt{5})$ such that $c^2\in\mathbb{Q}(\sqrt{5})$ is totally negative.\par
    Denote $c=xi+yij$ ($x,y\in\mathbb{Q}(\sqrt{5})$) where $1,i,j,ij$ is an $\mathbb{Q}(\sqrt{5})$-basis of $B$ with $i^2=\alpha^2$ and $j^2=p$.
    Then $c^2=x^2\cdot\left(-\frac{1+\sqrt{5}}{2}\right)-y^2\cdot\left(-\frac{1+\sqrt{5}}{2}\right)\cdot p=\left(\frac{1+\sqrt{5}}{2}\right)(y^2p-x^2)$ by calculation.
    Take a pair $(u,w)$ of integers which are larger than $p$, which satisfies a Pell's equation $u^2-5w^2=1$.
    By substituting $x=u+w\sqrt{5}, y=1$, then,
    \begin{align*}
     c^2=\left(\frac{1+\sqrt{5}}{2}\right)(p-(u+w\sqrt{5})^2)<0.
    \end{align*}
    Now $1=u^2-5w^2=(u+w\sqrt{5})(u-w\sqrt{5})$ and so $0<(u-w\sqrt{5})<1$.\par
    Thus, the another conjugate of $c^2\in\mathbb{Q}(\sqrt{5})$ is
    \begin{align*}
     \left(\frac{1-\sqrt{5}}{2}\right)(p-(u-w\sqrt{5})^2)<0.
    \end{align*}
    Therefore, $c^2\in\mathbb{Q}(\sqrt{5})$ is totally negative and so the division ring $B$ has a positive anti-involution over $\mathbb{Q}$.\par
    Now there is the order $\mathcal{O}=\mathcal{O}_{\mathbb{Q}(\sqrt{5})}\oplus\mathcal{O}_{\mathbb{Q}(\sqrt{5})}i\oplus\mathcal{O}_{\mathbb{Q}(\sqrt{5})}j\oplus\mathcal{O}_{\mathbb{Q}(\sqrt{5})}ij$ in $B$, so by Proposition \ref{construction for Type 2}, there exists a $4$-dimenional simple abelian variety whose endomorphism ring contains $\mathcal{O}$.
    Thus, the automorphism corresponding to $i\in\mathcal{O}$ corresponds to $\alpha=\sqrt{-\frac{1+\sqrt{5}}{2}}$ and so the proof is concluded.
   \end{proof}
  \begin{flushleft}{\bf{Type 4}}\end{flushleft}\par
   Let $X$ be a $4$-dimensional simple abelian variety of Type 4 and $f\in\mathrm{End}(X)$ be an automorphism of $X$ which corresponds to $\alpha\in B$.\\
   $\bullet$ If $d=1$, $[K:\mathbb{Q}]=2$ or $4$ or $8$ by the restriction in Table \ref{table1} and $\alpha\in U_K$.\par
    Now $[K_0:\mathbb{Q}]=1$ or $2$ or $4$, respectively.
    Denote the minimal polynomial of $\alpha$ over $\mathbb{Q}$ by $P(x)$.\par
    If $\alpha\in K_0$, the minimum value of the square of the maximal absolute value of the roots of $P(x)$ is $\left(\frac{1+\sqrt{5}}{2}\right)^2$ except $1$, by the deduction in Type 1.\par
    If $\alpha\in K\setminus K_0$, by denoting $a=\abs{\alpha}^2\in U_{K_0}$, as in Section \ref{The minimum value of the first dynamical degrees of automorphisms}, $\mathbb{Q}(\sqrt{a})$ is a totally real number field.
    If $a=1$, then the first dynamical degree would be $1$ and so we assume $a\neq1$.
    Also, by assuming that the maximal absolute value of the conjugates of $\sqrt{a}$ is less than $2$, it can be written as $\sqrt{a}=\zeta_N^m+\frac{1}{\zeta_N^m}$ ($m,N\in\mathbb{Z}_{>0}$) with $m<N$, $\mathrm{gcd}(N,m)=1$ and we can assume $N\geq3$.
    Thus, $a=\left(\zeta_N^m+\frac{1}{\zeta_N^m}\right)^2=\zeta_N^{2m}+\frac{1}{\zeta_N^{2m}}+2$ and so 
    \begin{align*}
     [\mathbb{Q}(a):\mathbb{Q}]=\left\{
                                 \begin{array}{ll}
                                  \frac{1}{2}\varphi(\frac{N}{2}) & (N:\text{even})\\ [+5pt]
                                  \frac{1}{2}\varphi(N) & (N:\text{odd})
                                 \end{array}.
                                \right.
    \end{align*} 
    Now $K_0\supset\mathbb{Q}(a)\supset\mathbb{Q}$ and so $[\mathbb{Q}(a):\mathbb{Q}]=1$ or $2$ or $4$, and this implies $N=5,8,10,12$, $15,16,20,24,30,32,40,48,60$.
    By comparing with Proposition \ref{constant term of minimal polynomial}, since $P(x)$ has constant term $\pm1$, $N=5,10,15,24,30,40,48,60$ are only possible.
    Thus, the minimum value of the maximal absolute value of the roots of $P(x)$ is $2\mathrm{cos}\left(\frac{\pi}{5}\right)$ except $1$.\par
    Thus, $4\mathrm{cos}^2\left(\frac{\pi}{5}\right)$ is the minimum value of the first dynamical degrees except $1$ for this case.\\
    $\bullet$ If $d=2$, $[K:\mathbb{Q}]=2$ and denote $K=\mathbb{Q}(\sqrt{-D})$ for some square-free integer $D>0$.\par
    Denote the minimal polynomial of $\alpha$ over $K$ by $p(x)\in\mathcal{O}_{K}[x]$ and the minimal polynomial of $\alpha$ over $\mathbb{Q}$ by $P(x)$.\par
    If the degree of $p(x)$ is $1$, then $\alpha\in U_K$ and so the maximal absolute value of the conjugates is always $1$.\par
    For the case that $p(x)$ is $2$, it can be written as $p(x)=x^2+sx+t\in\mathcal{O}_{K}[x]$ with $t\in U_{K}$.
    By Dirichlet's unit theorem, $U_K$ is generated by some cyclotomic element $\zeta$ which has the smallest positive angle and denote $t=\zeta^n$.\par
    If the degree of $P(x)$ is $2$, then it can be written as $x^2+sx\pm1\in\mathbb{Z}[x]$ and the minimum value of the maximal absolte value of the roots of $P(x)$ is $\frac{1+\sqrt{5}}{2}$ except $1$.\par
    Otherwise, $P(x)$ has degree $4$ of the form $(x^2+sx+\zeta^n)(x^2+\overline{s}x+\zeta^{-n})\in\mathbb{Z}[x]$.
    The roots of this polynomial are $\alpha,\frac{\zeta^n}{\alpha},\overline{\alpha},\frac{\zeta^{-n}}{\overline{\alpha}}$ and so the maximal absolute value of these roots is $\mathrm{max}\{\abs{\alpha},\frac{1}{\abs{\alpha}}\}$.
    Assuming that $\mathrm{max}\{\abs{\alpha},\frac{1}{\abs{\alpha}}\}<1.5$, then $s\in\mathcal{O}_K$ satisfies $\abs{s}<1.5+\frac{1}{1.5}=\frac{13}{6}$.\par
    For the case $D\neq1,3$, $U_K=\{\pm1\}$ and so $t$ is $\pm1$ and this implies $s\notin\mathbb{Z}$.\\
    $\bullet$ If $-D\equiv1$ $(\mathrm{mod}\ 4)$, then $\mathcal{O}_{\mathbb{Q}(\sqrt{-D})}=\mathbb{Z}\left[\frac{1+\sqrt{-D}}{2}\right]$.\par
     By denoting $s=p+q\left(\frac{1+\sqrt{-D}}{2}\right)\in\mathcal{O}_{\mathbb{Q}(\sqrt{-D})}$ ($p,q\in\mathbb{Z}$), 
     \begin{align*}
      \abs{s}^2=\left(p+\frac{q}{2}\right)^2+\frac{q^2D}{4}=\frac{(2p+q)^2+q^2D}{4}<\frac{169}{36}.
     \end{align*}\par
     Thus, if $D\neq3$, then $q\neq0$ and so $D=7,11,15$.\\
    $\bullet$ If $-D\equiv2,3$ $(\mathrm{mod}\ 4)$, then $\mathcal{O}_{\mathbb{Q}(\sqrt{-D})}=\mathbb{Z}\left[\sqrt{-D}\right]$.\par
      By denoting $s=p+q\sqrt{-D}\in\mathcal{O}_{\mathbb{Q}(\sqrt{-D})}$ ($p,q\in\mathbb{Z}$), 
      \begin{align*}
       \abs{s}^2=p^2+q^2D<\frac{169}{36}.
      \end{align*}\par
      Thus, if $D\neq1$, then $q\neq0$ and so $D=2$.\\
    For any integer $m$, the absolute values of the roots of $(x^2+sx+t)(x^2+\overline{s}x+\overline{t})$ and $(x^2+s\zeta^mx+t\zeta^{2m})(x^2+\overline{s}\zeta^{-m}x+\overline{t}\zeta^{-2m})$ are the same and so the maximal absolute value is the same too.
    Also, $s\neq0$ by assuming that the maximal absolute value of the roots is larger than $1$.\par
    Thus, only the cases $0\leq\mathrm{arg}(s)<\mathrm{arg}(\zeta)$ are worth to consider and they are enumerated in Table \ref{appendix 5} and the minimum of the square of the maximal absolute value of the roots of $P(x)$ is $1.3122\cdots^2=1.7220\cdots$ except $1$.\\
   \\
   Therefore, the minimum value of the first dynamical degrees of automorphisms of $4$-dimensional simple abelian varieties is $\frac{1+\sqrt{5}}{2}=1.6180\cdots$ except $1$, and realized on a simple abelian variety of Type 2.

 \subsection{dimension $g=6$}
  Let $X$ be a $6$-dimensional simple abelian variety and define $B$, $K$, $K_0$, $d$, $e$ and $e_0$ as in Section \ref{Endomorphism algebras of simple abelian varieties}.
  \begin{flushleft}{\bf{Type 1}}\end{flushleft}\par
   Let $X$ be a $6$-dimensional simple abelian variety of Type 1 and $f\in\mathrm{End}(X)$ be an automorphism of $X$ which corresponds to $\alpha\in U_K$ as in Section \ref{The minimum value of the first dynamical degrees of automorphisms}.
   Now $[K:\mathbb{Q}]=1$ or $2$ or $3$ or $6$ by the restriction in Table \ref{table1}.\par
   Assuming that the first dynamical degree is less than $4$, by using similar deduction as in the $4$-dimensional case, the minimum value of the maximal absolute value of the roots of $P(x)$ is $2\mathrm{cos}\left(\frac{\pi}{5}\right)$ except $1$.
   The realizability is concerned in Section \ref{The minimum value of the first dynamical degrees of automorphisms}.
   Thus, the minimum value of the first dynamical degrees is $4\mathrm{cos}^2\left(\frac{\pi}{5}\right)=\left(\frac{1+\sqrt{5}}{2}\right)^2$ except $1$ for this type.
  \begin{flushleft}{\bf{Type 2, Type 3}}\end{flushleft}\par
   Let $X$ be a $6$-dimensional simple abelian variety of Type 2 or 3 and $f\in\mathrm{End}(X)$ be an automorphism of $X$ which corresponds to $\alpha\in B$.
   Now $[K:\mathbb{Q}]=1$ or $3$ by the restriction in Table \ref{table1}.\par
   Denote the minimal polynomial of $\alpha$ over $K$ by $p(x)$ and the minimal polynomial of $\alpha$ over $\mathbb{Q}$ by $P(x)$.
   If the degree of $p(x)$ is $1$, then $\alpha\in U_K$ and so it is reduced to Type 1 of $3$-dimensional case.
   Thus, $4\mathrm{cos}^2\left(\frac{\pi}{7}\right)$ is a lower bound except $1$ for this case.\par
   If the degree of $p(x)$ is $2$, it can be written as $p(x)=x^2+ax+b\in\mathcal{O}_K[x](b\in U_K)$ and consider the following 3 cases as in Section \ref{The minimum value of the first dynamical degrees of automorphisms}.
   Let $\{\sigma_i\}$ be the set of all $\mathbb{Q}$-embeddings $K\hookrightarrow\mathbb{C}$.\\
   $\bullet$ there exists $1\leq i\leq e$ such that $\abs{\sigma_i(b)}>1$\par
    The maximal absolute value of all conjugates of $b\in U_K$ is at least $2\mathrm{cos}\left(\frac{\pi}{7}\right)$ and so the maximal absolute value of the roots of $P(x)$ is at least $\sqrt{2\mathrm{cos}\left(\frac{\pi}{7}\right)}$.
    This is achived by $p(x)=x^2-2\mathrm{cos}\left(\frac{2\pi}{7}\right)$ and $P(x)=(x^2-2\mathrm{cos}\left(\frac{2\pi}{7}\right))(x^2-2\mathrm{cos}\left(\frac{4\pi}{7}\right))(x^2-2\mathrm{cos}\left(\frac{6\pi}{7}\right))$.
    Thus, $2\mathrm{cos}\left(\frac{\pi}{7}\right)=1.8019\cdots$ is a lower bound except $1$ for this case.\\
   $\bullet$ $\sigma_i(b)=1$ for all $1\leq i\leq e$\par
    $\mathrm{max}\{\abs{\sigma_i(a)}\}$ must be larger than $2$ and close to $2$.
    By using a computer algebra, totally real algebraic integers of degree $3$ whose conjugates have modulus less than $2.25$ are enumerated in Table \ref{appendix 6}.
    $a=2.1149\cdots$ provides the minimum of the maximal absolute value.
    For this case, the maximal absolute value of the roots of $P(x)$ is $1.4012\cdots$ and so $1.4012\cdots^2=1.9635\cdots$ is a lower bound except $1$ for this case.\\
   $\bullet$ $\sigma_i(b)=-1$ for all $1\leq i\leq e$\par
    $a=1$ provides the minimum value of the first dynamical degrees as in Section \ref{The minimum value of the first dynamical degrees of automorphisms}.
    Thus, $\left(\frac{1+\sqrt{5}}{2}\right)^2$ is a lower bound except $1$ for this case.\par
   Thus, $2\mathrm{cos}\left(\frac{\pi}{7}\right)=1.8019\cdots$ is the minimum value of the first dynamical degrees except $1$ for these types by the next theorem.
   \begin{theorem}
    Define $\alpha:=\sqrt{\zeta_7+\frac{1}{\zeta_7}}$.
    Theorem \ref{divisional} implies that there exists a prime\nolinebreak\ number $p$ where $B:=\left(\frac{\zeta_7+\frac{1}{\zeta_7},\ p}{\mathbb{Q}(\zeta_7+\frac{1}{\zeta_7})}\right)$ is a divisional totally indefinite quaternion algebra.
    On this condition, there is a positive anti-involotion on $B$ over $\mathbb{Q}$.
    Moreover, there is an automorphism of a $6$-dimensional simple abelian vaeriety of \textnormal{Type 2 ($d=2$, $e=3$, $m=1$)} which correponds to $\alpha=\sqrt{\zeta_7+\frac{1}{\zeta_7}}$.
   \end{theorem}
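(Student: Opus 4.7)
The plan is to follow the same template as Theorem 5.8 (the 4-dimensional construction), but adjust for the fact that $\alpha^{2}=\zeta_{7}+\zeta_{7}^{-1}$ is \emph{not} totally negative: its three real conjugates are $2\cos(2\pi/7)>0$, $2\cos(4\pi/7)<0$, $2\cos(6\pi/7)<0$. Let $F=\mathbb{Q}(\zeta_{7}+\zeta_{7}^{-1})$ with embeddings $\sigma_{1},\sigma_{2},\sigma_{3}$ into $\mathbb{R}$, indexed so that $\sigma_{k}(\alpha^{2})=2\cos(2k\pi/7)$. Fix $1,i,j,ij$ as an $F$-basis of $B$ with $i^{2}=\alpha^{2}$ and $j^{2}=p$. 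Since $p>0$ at every real place, the only obstruction for $B$ to be totally indefinite is absent, so once Theorem \ref{divisional} supplies a prime $p$ making $B$ divisional, the algebra is automatically a totally indefinite quaternion division algebra.

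Next, I compute $c^{2}$ for $c=xi+yij$ with $x,y\in F$. Using $i^{2}=\alpha^{2}$, $(ij)^{2}=-\alpha^{2}p$, and that the cross terms cancel because $ij=-ji$ and $x,y$ are central, one gets
\begin{equation*}
c^{2}=\alpha^{2}(x^{2}-py^{2})\in F.
\end{equation*}
By Theorem \ref{construction of positive anti-involution}, the goal is to produce $x,y\in F$ with $y\neq 0$ (so $c\notin F$) such that $c^{2}$ is totally negative. Setting $y=1$, I need $x\in\mathcal{O}_{F}$ satisfying
\begin{equation*}
\sigma_{1}(x)^{2}<p,\qquad \sigma_{2}(x)^{2}>p,\qquad \sigma_{3}(x)^{2}>p,
\end{equation*}
so that $\sigma_{k}(x^{2}-p)$ has the sign opposite to $\sigma_{k}(\alpha^{2})$ at each $k$.

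The main issue is producing such an $x$. The strategy is to use Dirichlet's unit theorem: since $F$ is a totally real cubic field, its unit group has rank $2$, so the image of $\mathcal{O}_{F}^{\times}$ under $\varepsilon\mapsto(\log|\sigma_{1}(\varepsilon)|,\log|\sigma_{2}(\varepsilon)|,\log|\sigma_{3}(\varepsilon)|)$ is a lattice of rank $2$ in the trace-zero hyperplane of $\mathbb{R}^{3}$. This hyperplane meets the open cone $\{t_{1}<0,\,t_{2}>0,\,t_{3}>0\}$, so there exists a unit $\varepsilon\in\mathcal{O}_{F}^{\times}$ with $|\sigma_{1}(\varepsilon)|<1$ and $|\sigma_{2}(\varepsilon)|,|\sigma_{3}(\varepsilon)|>1$; for instance one can take a suitable monomial in the cyclotomic units of $F$. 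Choosing $k\in\mathbb{Z}_{>0}$ large enough and setting $x=\varepsilon^{k}$, one forces $\sigma_{1}(x)^{2}<p$ while $\sigma_{2}(x)^{2},\sigma_{3}(x)^{2}>p$, which is the required sign pattern. Then $c=\varepsilon^{k}i+ij$ lies in $B\setminus F$ and $c^{2}$ is totally negative, giving the positive anti-involution via Theorem \ref{construction of positive anti-involution}.

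Finally, the natural order $\mathcal{O}=\mathcal{O}_{F}\oplus\mathcal{O}_{F}i\oplus\mathcal{O}_{F}j\oplus\mathcal{O}_{F}ij\subset B$ contains $\alpha$-related data, so Proposition \ref{construction for Type 2} (with $d=2$, $e=3$, $m=1$) produces a $6$-dimensional simple abelian variety $X$ with $B\xrightarrow{\sim}\mathrm{End}_{\mathbb{Q}}(X)$ and $\mathcal{O}\hookrightarrow\mathrm{End}(X)$. The automorphism corresponding to $i\in\mathcal{O}$ then has $i^{2}=\alpha^{2}$, hence identifies with $\alpha=\sqrt{\zeta_{7}+\zeta_{7}^{-1}}$, as required. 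The only delicate step is the unit-theoretic construction of $x$; the rest is parallel to Theorem \ref{4-dimensional construction} with $\mathbb{Q}(\sqrt{5})$ replaced by the totally real cubic $F$ and the Pell-equation argument replaced by a Dirichlet-unit argument.
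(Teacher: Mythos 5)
Your proposal is correct and follows the same overall template as the paper: write $c=xi+yij$, compute $c^{2}=\alpha^{2}(x^{2}-py^{2})$, set $y=1$, and choose $x\in\mathcal{O}_{F}$ whose square is small at the embedding where $\alpha^{2}>0$ and large at the two embeddings where $\alpha^{2}<0$, so that $c^{2}$ is totally negative and Theorem \ref{construction of positive anti-involution} applies; then feed the order $\mathcal{O}_F\oplus\mathcal{O}_Fi\oplus\mathcal{O}_Fj\oplus\mathcal{O}_Fij$ into Proposition \ref{construction for Type 2}. The only place you genuinely diverge is in how you produce that $x$. You invoke Dirichlet's unit theorem to get a unit $\varepsilon\in\mathcal{O}_F^{\times}$ with $\lvert\sigma_1(\varepsilon)\rvert<1<\lvert\sigma_2(\varepsilon)\rvert,\lvert\sigma_3(\varepsilon)\rvert$ and set $x=\varepsilon^{k}$ for $k\gg0$. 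The paper instead takes $x=a+b(\zeta_7+\zeta_7^{-1})$ with $a,b\in\mathbb{Z}$, $a>0$, $b<0$, and $0<a+b(\zeta_7+\zeta_7^{-1})<1$, and lets $a\to\infty$, $b\to-\infty$; since $2\cos(4\pi/7)$ and $2\cos(6\pi/7)$ differ from $2\cos(2\pi/7)$, the other two embeddings $\lvert a+b\,\sigma_k(\zeta_7+\zeta_7^{-1})\rvert$ then blow up. Both routes are valid. The paper's is more elementary (it needs nothing beyond the fact that for each $b$ there is an integer $a$ in an interval of length one) and is the direct analogue of the Pell-equation step in Theorem \ref{4-dimensional construction}; your unit-lattice argument is conceptually clean but imports Dirichlet's unit theorem and requires the (true, but not entirely free) observation that the full-rank log-unit lattice meets the open sign cone $\{t_1<0,\,t_2>0,\,t_3>0\}$ in the trace-zero plane. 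Either way the sign analysis, the verification that $i$ is a unit in $\mathcal{O}$, and the appeal to Proposition \ref{construction for Type 2} with $d=2$, $e=3$, $m=1$ all go through.
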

   \begin{proof}
    By Theorem \ref{construction of positive anti-involution}, it is enough to find $c\in B\setminus\mathbb{Q}\left(\zeta_7+\frac{1}{\zeta_7}\right)$ such that $c^2\in\mathbb{Q}\left(\zeta_7+\frac{1}{\zeta_7}\right)$ is totally negative.\par
    Denote $c=xi+yij\in B$ where $1,i,j,ij$ is an $\mathbb{Q}\left(\zeta_7+\frac{1}{\zeta_7}\right)$-basis of $B$ with $i^2=\zeta_7+\frac{1}{\zeta_7}$ and $j^2=p$.
    Then $c^2=x^2\cdot\left(\zeta_7+\frac{1}{\zeta_7}\right)-y^2\cdot\left(\zeta_7+\frac{1}{\zeta_7}\right)\cdot p=\left(\zeta_7+\frac{1}{\zeta_7}\right)(x^2-y^2p)$ by calculation.
    By substituting $x=a+b\left(\zeta_7+\frac{1}{\zeta_7}\right), y=1$ with $a,b\in\mathbb{Z}$, then,
    \begin{align*}
     c^2=\left(\zeta_7+\frac{1}{\zeta_7}\right)\left(\left(a+b\left(\zeta_7+\frac{1}{\zeta_7}\right)\right)^2-p\right).
    \end{align*}
    The other conjugates of $c^2\in\mathbb{Q}\left(\zeta_7+\frac{1}{\zeta_7}\right)$ are
    \begin{align*}
     \left(\zeta_7^2+\frac{1}{\zeta_7^2}\right)\left(\left(a+b\left(\zeta_7^2+\frac{1}{\zeta_7^2}\right)\right)^2-p\right)
    \end{align*}
    and
    \begin{align*}
     \left(\zeta_7^3+\frac{1}{\zeta_7^3}\right)\left(\left(a+b\left(\zeta_7^3+\frac{1}{\zeta_7^3}\right)\right)^2-p\right).
    \end{align*}
    Take $(a,b)$ as $a>0$, $b<0$, $0<a+b\left(\zeta_7+\frac{1}{\zeta_7}\right)<1$ and then by $a\to\infty$, $b\to-\infty$, it holds that $c^2<0$, $\abs[\big]{a+b\left(\zeta_7^2+\frac{1}{\zeta_7^2}\right)}\to\infty$ and $\abs[\big]{a+b\left(\zeta_7^3+\frac{1}{\zeta_7^3}\right)}\to\infty$.
    Thus, all the conjugates of $c^2$ can be negative and so it defines a positive anti-involution on $B$ over $\mathbb{Q}$,\par
    Now there is the order $\mathcal{O}=\mathcal{O}_{\mathbb{Q}\left(\zeta_7+\frac{1}{\zeta_7}\right)}\oplus\mathcal{O}_{\mathbb{Q}\left(\zeta_7+\frac{1}{\zeta_7}\right)}i\oplus\mathcal{O}_{\mathbb{Q}\left(\zeta_7+\frac{1}{\zeta_7}\right)}j\oplus\mathcal{O}_{\mathbb{Q}\left(\zeta_7+\frac{1}{\zeta_7}\right)}ij$ in $B$, so by Proposition \ref{construction for Type 2}, there exists a $6$-dimenional simple abelian variety whose endomorphism ring contains $\mathcal{O}$.
    Thus, the automorphism corresponding to $i\in\mathcal{O}$ corresponds to $\alpha=\sqrt{\zeta_7+\frac{1}{\zeta_7}}$ and so the proof is concluded.
   \end{proof}
  \begin{flushleft}{\bf{Type 4}}\end{flushleft}\par
   Let $X$ be a $6$-dimensional simple abelian variety of Type 4 and $f\in\mathrm{End}(X)$ be an automorphism of $X$ which corresponds to $\alpha\in B$.\par
   Now $d=1$ and $[K:\mathbb{Q}]=2$ or $4$ or $6$ or $12$ by the restriction in Table \ref{table1} and $\alpha\in U_K$.\par
   Now $[K_0:\mathbb{Q}]=1$ or $2$ or $3$ or $6$, respectively.
   Denote the minimal polynomial of $\alpha$ over $\mathbb{Q}$ by $P(x)$.\par
   If $\alpha\in K_0$, the minimum value of the square of the maximal absolute value of the roots of $P(x)$ is $4\mathrm{cos}^2\left(\frac{\pi}{7}\right)$ except $1$, by the deduction in Type 1.\par
   If $\alpha\in K\setminus K_0$, by denoting $a=\abs{\alpha}^2\in U_{K_0}$, as in Section \ref{The minimum value of the first dynamical degrees of automorphisms}, $\mathbb{Q}(\sqrt{a})$ is a totally real number field.
   By using similar deduction as in the $4$-dimensional case, the minimum value of the square of the maximal absolute value of the roots of $P(x)$ is $4\mathrm{cos}^2\left(\frac{\pi}{5}\right)$ except $1$.\\
   \\
  Therefore, the minimum value of the first dynamical degrees of automorphisms of $6$-dimensional simple abelian varieties is $2\mathrm{cos}\left(\frac{\pi}{7}\right)$ except $1$, and realized on a simple abelian variety of Type 2.

 \subsection{dimension $g=7$}
  By using a computer algebra,
  \begin{align*}
   \mathrm{min}\left\{
    \begin{array}{ll}
     \text{the maximal absolute value of the conjugates of an}\\
     \text{algebraic unit, whose conjugates are all real and positive, of degree $p$}
    \end{array}
   \right\}
  \end{align*}
  is $4.0333\cdots$, the maximum root of the polynonial $x^7-14x^6+77x^5-211x^4+301x^3-210x^2+56x-1$ by Table \ref{appendix 7}.
  By Proposition \ref{case otherwise}, $m(p)=4.0333\cdots$ and it is realized on a simple abelian variety of Type 4.

 \subsection{dimension $g=8$}
  Let $X$ be an $8$-dimensional simple abelian variety and define $B$, $K$, $K_0$, $d$, $e$ and $e_0$ as in Section \ref{Endomorphism algebras of simple abelian varieties}.
  \begin{flushleft}{\bf{Type 1}}\end{flushleft}\par
   Let $X$ be an $8$-dimensional simple abelian variety of Type 1 and $f\in\mathrm{End}(X)$ be an automorphism of $X$ which corresponds to $\alpha\in U_K$ as in Section \ref{The minimum value of the first dynamical degrees of automorphisms}.
   Now $[K:\mathbb{Q}]=1$ or $2$ or $4$ or $8$ by the restriction in Table \ref{table1}.\par
   Assuming that the first dynamical degree is less than $4$, by using similar deduction as in the $4$-dimensional case, the minimum value of the maximal absolute value of the roots of $P(x)$ is $2\mathrm{cos}\left(\frac{\pi}{5}\right)$ except $1$.
   The realizability is concerned in Section \ref{The minimum value of the first dynamical degrees of automorphisms}.
   Thus, the minimum value of the first dynamical degrees is $4\mathrm{cos}^2\left(\frac{\pi}{5}\right)=\left(\frac{1+\sqrt{5}}{2}\right)^2$ except $1$ for this type.
  \begin{flushleft}{\bf{Type 2, Type 3}}\end{flushleft}\par
   Let $X$ be an $8$-dimensional simple abelian variety of Type 2 or 3 and $f\in\mathrm{End}(X)$ be an automorphism of $X$ which corresponds to $\alpha\in B$.
   Now $[K:\mathbb{Q}]=1$ or $2$ or $4$ by the restriction in Table \ref{table1}.\par
   Denote the minimal polynomial of $\alpha$ over $K$ by $p(x)$ and the minimal polynomial of $\alpha$ over $\mathbb{Q}$ by $P(x)$.
   If the degree of $p(x)$ is $1$, then $\alpha\in U_K$ and so it is reduced to Type 1 of $4$-dimensional case.
   Thus, the minimum value is $\frac{3+\sqrt{5}}{2}$ for this case.\par
   If the degree of $p(x)$ is $2$, it can be written as $p(x)=x^2+ax+b\in\mathcal{O}_K[x](b\in U_K)$ and consider the following 3 cases as in Section \ref{The minimum value of the first dynamical degrees of automorphisms}.
   Let $\{\sigma_i\}$ be the set of all $\mathbb{Q}$-embeddings $K\hookrightarrow\mathbb{C}$.\\
   $\bullet$ there exists $1\leq i\leq e$ such that $\abs{\sigma_i(b)}>1$\par
    The maximal absolute value of all conjugates of $b\in U_K$ is at least $2\mathrm{cos}\left(\frac{\pi}{5}\right)$ and so the maximal absolute value of the roots of $P(x)$ is at least $\sqrt{2\mathrm{cos}\left(\frac{\pi}{5}\right)}$.
    This is achived as in $4$-dimensional case. 
    Thus, $2\mathrm{cos}\left(\frac{\pi}{5}\right)=1.6180\cdots$ is a lower bound except $1$ for this case.\par
    This is realizable for the case of Type 2 ($d=2$, $e=2$, $m=2$) as in Theorem \ref{4-dimensional construction}.\\
   $\bullet$ $\sigma_i(b)=1$ for all $1\leq i\leq e$\par
    $\mathrm{max}\{\abs{\sigma_i(a)}\}$ must be larger than $2$ and close to $2$.
    By using a computer algebra, totally real algebraic integers of degree $4$ whose conjugates have modulus less than $2.1$ are enumerated in Table \ref{appendix 8}.
    $a=2.0614\cdots$ provides the minimum of the maximal absolute value.
    For this case, the maximal absolute value of the roots of $P(x)$ is $1.2806\cdots$ and so $1.2806\cdots^2=1.6400\cdots$ is a lower bound except $1$ for this case.\\
   $\bullet$ $\sigma_i(b)=-1$ for all $1\leq i\leq e$\par
    $a=1$ provides the minimum value of the first dynamical degrees as in Section \ref{The minimum value of the first dynamical degrees of automorphisms}.
    Thus, $\left(\frac{1+\sqrt{5}}{2}\right)^2$ is a lower bound except $1$ for this case.\par
   Therefore, $2\mathrm{cos}\left(\frac{\pi}{5}\right)$ is the minimum value of the first dynamical degrees for these types.
  \begin{flushleft}{\bf{Type 4}}\end{flushleft}\par
   Let $X$ be an $8$-dimensional simple abelian variety of Type 4 and $f\in\mathrm{End}(X)$ be an automorphism of $X$ which corresponds to $\alpha\in B$.\\
   $\bullet$ If $d=1$, $[K:\mathbb{Q}]=2$ or $4$ or $8$ or $16$ by the restriction in Table \ref{table1} and $\alpha\in U_K$.\par
    Now $[K_0:\mathbb{Q}]=1$ or $2$ or $4$ or $8$, respectively.
    Denote the minimal polynomial of $\alpha$ over $\mathbb{Q}$ by $P(x)$.\par
    If $\alpha\in K_0$, the minimum value of the square of the maximal absolute value of the roots of $P(x)$ is $\left(\frac{1+\sqrt{5}}{2}\right)^2$ except $1$, by the deduction in Type 1.\par
    If $\alpha\in K\setminus K_0$, by denoting $a=\abs{\alpha}^2\in U_{K_0}$, as in Section \ref{The minimum value of the first dynamical degrees of automorphisms}, $\mathbb{Q}(\sqrt{a})$ is a totally real number field.
    By using similar deduction as in the $4$-dimensional case, the minimum value of the maximal absolute value of the roots of $P(x)$ is $2\mathrm{cos}\left(\frac{\pi}{5}\right)$ except $1$.\par
    Thus, $4\mathrm{cos}^2\left(\frac{\pi}{5}\right)$ is the minimum value of the first dynamical degrees except $1$ for this case.\\
   $\bullet$ If $d=2$, $[K:\mathbb{Q}]=2$ or $4$ is only possible by the restriction in Table \ref{table1}.\par
    If $[K:\mathbb{Q}]=2$, then it is reduced to the case of $4$-dimensional, and $1.3122\cdots^2=1.7220\cdots$ is a lower bound of the first dynamical degrees except $1$.\par
    For the case $[K:\mathbb{Q}]=4$, let $p(x)$ be the minimal polynomial of $\alpha$ over $K$ and let $P(x)$ be the minimal polynomial of $\alpha$ over $\mathbb{Q}$.
    If the degree of $p(x)$ is $1$, then $\alpha\in U_K$ and so $\frac{3+\sqrt{5}}{2}$ is a lower bound except $1$ as the case of $2$-dimensional.\par
    For the case that the degree of $p(x)$ is $2$, it can be written as $p(x)=x^2+sx+t\in\mathcal{O}_{K}[x]$ with $t\in U_{K}$.
    If $\mathrm{max}\{\text{modulus of conjugates of }t\}>1$, then the maximal absolute value of the roots of $P(x)$ is at least $\sqrt{\mathrm{max}\{\text{modulus of conjugates of }t\}}$ and so $\sqrt{\frac{1+\sqrt{5}}{2}}$ is a lower bound.
    Thus, $\frac{1+\sqrt{5}}{2}$ is a lower bound of the first dynamical degrees except $1$ for this case.\par
    For the case $\mathrm{max}\{\text{modulus of conjugates of }t\}=1$, by $[K:\mathbb{Q}]=4$,
    \begin{align*}
     t\in\{1,-1,\zeta_3,\zeta_3^2,i,-i,\zeta_5,\zeta_5^2,\zeta_5^3,\zeta_5^4,\zeta_6,\zeta_6^5,\zeta_8,\zeta_8^3,\zeta_8^5,\zeta_8^7,\zeta_{10},\zeta_{10}^3,\zeta_{10}^7,\zeta_{10}^9, \zeta_{12},\zeta_{12}^5,\zeta_{12}^7,\zeta_{12}^{11}\}
    \end{align*}
    For considering the absolute value of the roots of $P(x)$, by the correspondence
    \begin{align*}
     &(x^2+sx+\zeta_3)\leftrightarrow(x^2+s\zeta_3x+1)\leftrightarrow(x^2+s\zeta_3^2x+\zeta_3^2),\\
     &(x^2+sx+\zeta_5)\leftrightarrow(x^2+s\zeta_5x+\zeta_5^3)\leftrightarrow(x^2+s\zeta_5^2x+1)\\
     &\hspace{5cm}\leftrightarrow(x^2+s\zeta_5^3x+\zeta_5^2)\leftrightarrow(x^2+s\zeta_5^4x+\zeta_5^4),\\
     &(x^2+sx+\zeta_6)\leftrightarrow(x^2+s\zeta_6x-1)\leftrightarrow(x^2+s\zeta_6^2x+\zeta_6^5),\\
     &(x^2+sx+\zeta_{10})\leftrightarrow(x^2+s\zeta_{10}x+\zeta_{10}^3)\leftrightarrow(x^2+s\zeta_{10}^2x-1)\\
     &\hspace{5cm}\leftrightarrow(x^2+s\zeta_{10}^3x+\zeta_{10}^7)\leftrightarrow(x^2+s\zeta_{10}^4x+\zeta_{10}^9),\\
     &(x^2+sx+i)\leftrightarrow(x^2+six-i),\\
     &(x^2+sx+\zeta_{8})\leftrightarrow(x^2+s\zeta_{8}x+\zeta_{8}^3)\leftrightarrow(x^2+s\zeta_{8}^2x+\zeta_{8}^5)\leftrightarrow(x^2+s\zeta_{8}^3x+\zeta_{8}^7),\\
     &(x^2+sx+\zeta_{12})\leftrightarrow(x^2+s\zeta_{12}^2x+\zeta_{12}^5)\leftrightarrow(x^2+s\zeta_{12}^3x+\zeta_{12}^7)\leftrightarrow(x^2+s\zeta_{12}^5x+\zeta_{12}^{11}),
    \end{align*}
    it suffices to consider the cases $t=1,-1,i,\zeta_8,\zeta_{12}$.
    Denote $K_0=\mathbb{Q}(\sqrt{D})$ where $D\in\mathbb{Z}_{>0}$ is a square-free integer.
    $s\in\mathcal{O}_K$ is of degree at most $2$ over $K_0$.
    \begin{enumerate}
     \item{Case $t=1$}\\
      It can be written as $P(x)=(x^2+sx+1)(x^2+s_1x+1)(x^2+s_2x+1)(x^2+s_3x+1)$ where $s=s_0,s_1,s_2,s_3$ are conjugates over $\mathbb{Q}$.
      Let
      \begin{align}\label{equation1}
       R(X)&=(X^2+(a+b\sqrt{D})X+a'+b'\sqrt{D})(X^2+(a-b\sqrt{D})X+a'-b'\sqrt{D})\nonumber\\
           &=X^4+2aX^3+(a^2-b^2D+2a')X^2+(2aa'-2bb'D)X+a'^2-b'^2D
      \end{align}
      be a polynomial, with $a+b\sqrt{D},a'+b'\sqrt{D}\in\mathcal{O}_{\mathbb{Q}(\sqrt{D})}$, which has $s=s_0,s_1,s_2,s_3$ as the roots.
      For $N=1.28$, all roots of $P(x)$ have modulus less than $N$ if and only if $s=s_0,s_1,s_2,s_3$ are all inside the domain
      \footnotesize
      \begin{align*}
       E:=\left\{z\in\mathbb{C}\;\middle|\;\mathrm{Re}(z)^2\left(N-\frac{1}{N}\right)^2+\mathrm{Im}(z)^2\left(N+\frac{1}{N}\right)^2<\left(N+\frac{1}{N}\right)^2\left(N-\frac{1}{N}\right)^2\right\}
      \end{align*}
      \normalsize
      The quartic polynomials, whose roots are all in $E$, are showed in Table \ref{appendix 9} and considering the polynomial is of the form (\ref{equation1}), or not.
      Thus, $1.2720\cdots=\sqrt{\frac{1+\sqrt{5}}{2}}$ is the minimum value and so $\frac{1+\sqrt{5}}{2}$ is a lower bound except $1$ for this case.
     \item{Case $t=-1$}\\
      It can be written as $P(x)=(x^2+sx-1)(x^2+s_1x-1)(x^2+s_2x-1)(x^2+s_3x-1)$ where $s=s_0,s_1,s_2,s_3$ are conjugates over $\mathbb{Q}$.\par
      For $N=1.28$, all roots of $P(x)$ have modulus less than $N$ if and only if $s=s_0,s_1,s_2,s_3$ are all inside the domain
      \footnotesize
      \begin{align*}
       E_1:=\left\{z\in\mathbb{C}\;\middle|\;\mathrm{Re}(z)^2\left(N+\frac{1}{N}\right)^2+\mathrm{Im}(z)^2\left(N-\frac{1}{N}\right)^2<\left(N+\frac{1}{N}\right)^2\left(N-\frac{1}{N}\right)^2\right\}
      \end{align*}
      \normalsize
      The quartic polynomials, whose roots are all in $E_1$, are showed in Table \ref{appendix 10} and considering the polynomial is of the form (\ref{equation1}), or not.
      Thus, $1.2720\cdots=\sqrt{\frac{1+\sqrt{5}}{2}}$ is the minimum value and so $\frac{1+\sqrt{5}}{2}$ is a lower bound except $1$ for this case.
     \item{Case $t=i$}\\
      It can be written as $P(x)=(x^2+sx+i)(x^2+tx+i)(x^2+\overline{s}x-i)(x^2+\overline{t}x-i)$ where $s,t,\overline{s},\overline{t}$ are conjugates over $\mathbb{Q}$.
      Especially, $s,t$ are conjugates over $\mathbb{Q}(i)$.\par
      $z=s,t$ is the roots of
      \begin{align}\label{equation2}
       R_1(z)=z^2+(a+bi)z+(c+di)\in\mathbb{Z}[i][z].
      \end{align}
      For $N=1.28$, all roots of $P(x)$ have modulus less than $N$ if and only if $s,t$ are inside the domain
      \begin{align*}
       E_2:=\left\{z\in\mathbb{C}\mid z\cdot\zeta_8\in E_1\right\}
      \end{align*}
      The quadratic polynomials of the form (\ref{equation2}), whose roots are all in $E_2$, are showed in Table \ref{appendix 11}.
      Thus, $1.2720\cdots=\sqrt{\frac{1+\sqrt{5}}{2}}$ is the minimum value except $1$ and so $\frac{1+\sqrt{5}}{2}$ is a lower bound except $1$ for this case.
     \item{Case $t=\zeta_8$}\\
      $K=\mathbb{Q}(\zeta_8)$ and $K_0=\mathbb{Q}\left(\zeta_8+\frac{1}{\zeta_8}\right)=\mathbb{Q}(\sqrt{2})$.
      It can be written as $P(x)=(x^2+sx+\zeta_8)(x^2+tx+\zeta_8^3)(x^2+\overline{t}x+\zeta_8^5)(x^2+\overline{s}x+\zeta_8^7)$ where $s,t,\overline{t},\overline{s}$ are conjugates over $\mathbb{Q}$.
      Especially, $s,t$ are conjugates over $\mathbb{Q}(\sqrt{2}i)$.\par
      $z=s,t$ is the roots of
      \begin{align}\label{equation3}
       R_2(z)=z^2+(a+b\sqrt{2}i)z+(c+d\sqrt{2}i)\in\mathbb{Z}[\sqrt{2}i][z].
      \end{align}
      For $N=1.28$, all roots of $P(x)$ have modulus less than $N$ if and only if $s$ is inside the domain
      \begin{align*}
       E_3:=\left\{z\in\mathbb{C}\mid z\cdot\zeta_{16}^3\in E_1\right\}
      \end{align*}
      and $t$ is inside the domain
      \begin{align*}
       E_4:=\left\{z\in\mathbb{C}\mid z\cdot\zeta_{16}\in E_1\right\}.
      \end{align*}
      The quadratic polynomials of the form (\ref{equation3}), with the roots $s,t\in\mathbb{Q}(\zeta_8)$ such that $s\in E_3$, $t\in E_4$ and $(x^2+sx+\zeta_8)(x^2+tx+\zeta_8^3)\in\mathbb{Z}[\sqrt{2}i][z]$, are showed in Table \ref{appendix 12}.
      Thus, $1.28^2$ is a lower bound except $1$ for this case.
     \item{Case $t=\zeta_{12}$}\\
      $K=\mathbb{Q}(\zeta_{12})$ and $K_0=\mathbb{Q}\left(\zeta_{12}+\frac{1}{\zeta_{12}}\right)=\mathbb{Q}(\sqrt{3})$.
      It can be written as $P(x)=(x^2+sx+\zeta_{12})(x^2+tx+\zeta_{12}^5)(x^2+\overline{t}x+\zeta_{12}^7)(x^2+\overline{s}x+\zeta_{12}^{11})$ where $s,t,\overline{t},\overline{s}$ are conjugates over $\mathbb{Q}$.
      Especially, $s,t$ are conjugates over $\mathbb{Q}(i)$.\par
      $z=s,t$ is the roots of
      \begin{align}\label{equation4}
       R_3(z)=z^2+(a+bi)z+(c+di)\in\mathbb{Z}[i][z].
      \end{align}
      For $N=1.28$, all roots of $P(x)$ have modulus less than $N$ if and only if $s$ is inside the domain
      \begin{align*}
       E_5:=\left\{z\in\mathbb{C}\mid z\cdot\zeta_{24}^5\in E_1\right\}
      \end{align*}
      and $t$ is inside the domain
      \begin{align*}
       E_6:=\left\{z\in\mathbb{C}\mid z\cdot\zeta_{24}\in E_1\right\}.
      \end{align*}
      The quadratic polynomials of the form (\ref{equation4}), with the roots $s,t\in\mathbb{Q}(\zeta_{12})$ such that $s\in E_5$, $t\in E_6$ and $(x^2+sx+\zeta_{12})(x^2+tx+\zeta_{12}^5)\in\mathbb{Z}[i][z]$, are showed in Table \ref{appendix 13}.
      Thus, $1.28^2$ is a lower bound except $1$ for this case.
    \end{enumerate}\par
    \noindent Therefore, the minimum value of the first dynamical degrees of automorphisms of $8$-dimensional simple abelian varieties is $\frac{1+\sqrt{5}}{2}$ except $1$, and realized on a simple abelian variety of Type 2.

 \subsection{dimension $g=9$}
  Let $X$ be a $9$-dimensional simple abelian variety and define $B$, $K$, $K_0$, $d$, $e$ and $e_0$ as in Section \ref{Endomorphism algebras of simple abelian varieties}.
  \begin{flushleft}{\bf{Type 1}}\end{flushleft}\par
   Let $X$ be a $9$-dimensional simple abelian variety of Type 1 and $f\in\mathrm{End}(X)$ be an automorphism of $X$ which corresponds to $\alpha\in U_K$ as in Section \ref{The minimum value of the first dynamical degrees of automorphisms}.
   Now $[K:\mathbb{Q}]=1$ or $3$ or $9$ by the restriction in Table \ref{table1}.\par
   Assuming that the first dynamical degree is less than $4$, by using similar deduction as in the $4$-dimensional case, the minimum value of the maximal absolute value of the roots of $P(x)$ is $2\mathrm{cos}\left(\frac{\pi}{7}\right)$ except $1$.
   The realizability is concerned in Section \ref{The minimum value of the first dynamical degrees of automorphisms}.
   Thus, the minimum value of the first dynamical degrees is $4\mathrm{cos}^2\left(\frac{\pi}{7}\right)$ except $1$ for this type.
  \begin{flushleft}{\bf{Type 4}}\end{flushleft}\par
   Let $X$ be a $9$-dimensional simple abelian variety of Type 4 and $f\in\mathrm{End}(X)$ be an automorphism of $X$ which corresponds to $\alpha\in B$.\\
   $\bullet$ If $d=1$, $[K:\mathbb{Q}]=2$ or $6$ or $18$ by the restriction in Table \ref{table1} and $\alpha\in U_K$.\par
   Now $[K_0:\mathbb{Q}]=1$ or $3$ or $9$, respectively.
   Denote the minimal polynomial of $\alpha$ over $\mathbb{Q}$ by $P(x)$.\par
   If $\alpha\in K_0$, the minimum value of the square of the maximal absolute value of the roots of $P(x)$ is $4\mathrm{cos}^2\left(\frac{\pi}{7}\right)$ except $1$, by the deduction in Type 1.\par
   If $\alpha\in K\setminus K_0$, by denoting $a=\abs{\alpha}^2\in U_{K_0}$, as in Section \ref{The minimum value of the first dynamical degrees of automorphisms}, $\mathbb{Q}(\sqrt{a})$ is a totally real number field.
   By using similar deduction as in the $4$-dimensional case, the minimum value of the square of the maximal absolute value of the roots of $P(x)$ is $4\mathrm{cos}^2\left(\frac{\pi}{7}\right)$ except $1$.\\
   $\bullet$ If $d=3$, $[K:\mathbb{Q}]=2$ is only possible by the restriction in Table \ref{table1}.\par
    Denote the minimal polynomial of $\alpha$ over $K$ by $p(x)\in\mathcal{O}_{K}[x]$ and the minimal polynomial of $\alpha$ over $\mathbb{Q}$ by $P(x)$.\par
    If the degree of $p(x)$ is $1$, then $\alpha\in U_K$ and so the maximal absolute value of the conjugates is always $1$.\par
    If the degree of $p(x)$ is $3$, then it can be written as $p(x)=x^3+sx^2+tx+u\in\mathcal{O}_{K}[x]$ with $u\in U_{K}$.
    By Dirichlet's unit theorem, $U_K$ contains only cyclotomic elements and so by $[K:\mathbb{Q}]=2$,
    \begin{align*}
     u\in\{1,-1,i,-i,\zeta_6,\zeta_6^2,\zeta_6^4,\zeta_6^5\}
    \end{align*}
    For considering the absolute value of the roots of $P(x)$, by the correspondence
    \begin{align*}
     &(x^3+sx^2+tx-1)\leftrightarrow(x^3-sx^2+tx+1),\\
     &(x^3+sx^2+tx+\zeta_6)\leftrightarrow(x^3+s\zeta_6x^2+t\zeta_6^2x+\zeta_6^4)\leftrightarrow(x^3+\overline{s}\zeta_6^5x^2+\overline{t}\zeta_6^4x+\zeta_6^2)\\
     &\hspace{10cm}\leftrightarrow(x^3+\overline{s}x^2+\overline{t}x+\zeta_6^5),\\
     &(x^3+sx^2+tx+i)\leftrightarrow(x^3+six^2-tx+1)\leftrightarrow(x^3-sx^2+tx-i)\leftrightarrow(x^3-six^2-tx-1),
    \end{align*}
    it suffices to consider the cases $u=1,\zeta_6$.
    Denote $K=\mathbb{Q}(\sqrt{-D})$ where $D\in\mathbb{Z}_{>0}$ is a square-free integer.
    Write $p(x)=x^3+(a+b\sqrt{-D})x^2+(a'+b'\sqrt{-D})x+u\in\mathcal{O}_{K}[x]$ and then $P(x)=(x^3+(a+b\sqrt{-D})x^2+(a'+b'\sqrt{-D})x+u)(x^3+(a-b\sqrt{-D})x^2+(a'-b'\sqrt{-D})x+\overline{u})$.
    For $N=1.3$, all roots of $P(x)$ have modulus less than $N$ if and only if all roots of $p(x)$ have modulus less than $N$.
    \begin{enumerate}
     \item{Case $u=1$, $b=0$, $b'=0$}\\
      $p(x)=x^3+ax^2+a'x+1\in\mathbb{Z}[x]$ and by Table \ref{appendix 14}, the minimum value of the maximal absolute value of the conjugates is $1.1509\cdots$.
      Thus, $1.1509\cdots^2=1.3247\cdots$ is a lower bound of the first dynamcal degrees except $1$ for this case.
     \item{Case $u=1$, $(b,b')\neq(0,0)$}\\
      Assuming that all roots of $p(x)$ have modulus less than $N=1.3$, the minimum value of the maximal absolute value of the conjugates is $1.1509\cdots$ by Table \ref{appendix 15}.
      Thus, $1.1509\cdots^2=1.3247\cdots$ is a lower bound of the first dynamcal degrees except $1$ for this case.
     \item{Case $u=\zeta_6$}\\
      Assuming that all roots of $p(x)$ have modulus less than $N=1.3$, the minimum value of the maximal absolute value of the conjugates is $1.2167\cdots$ by Table \ref{appendix 16}.
      Thus, $1.2167\cdots^2=1.4803\cdots$ is a lower bound of the first dynamcal degrees except $1$ for this case.
    \end{enumerate}\par
   Therefore, $1.3247\cdots$ is a lower bound of the first dynamical degrees except $1$.\par
   It remains to show that this is the minimum value.\par
   Denote the real root of $x^3-x^2+1\in\mathbb{Z}[x]$ by $\lambda$ and the other complex roots by $z,\overline{z}$.
   Define $E:=\mathbb{Q}(\lambda,z)$, $F:=\mathbb{Q}(\sqrt{-23})$, and $K:=\mathbb{Q}(\lambda)$.
   \begin{lemma}
    $E$ contains $F$ as a subfield.
    Moreover, $E/F$ is a Galois extension of degree $3$ and it is cyclic. 
   \end{lemma}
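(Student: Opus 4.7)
The plan is to identify $E$ as the splitting field over $\mathbb{Q}$ of the polynomial $f(x) = x^3 - x^2 + 1$ and then apply standard Galois theory of cubics. Since $\lambda$ is the real root and $z, \overline{z}$ are the two complex roots, we have $E = \mathbb{Q}(\lambda, z, \overline{z})$, which coincides with $\mathbb{Q}(\lambda, z)$ as stated. Because $f$ is irreducible over $\mathbb{Q}$ (it has no rational root; by the rational root theorem the only candidates are $\pm 1$), we know $[\mathbb{Q}(\lambda):\mathbb{Q}] = 3$, and the splitting field has degree $3$ or $6$ over $\mathbb{Q}$ depending on whether the discriminant of $f$ is a square.

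First I would compute the discriminant of $f$. Using the formula $\mathrm{disc}(x^3 + ax^2 + bx + c) = 18abc - 4a^3c + a^2b^2 - 4b^3 - 27c^2$ with $(a,b,c) = (-1,0,1)$, I get $\mathrm{disc}(f) = 4 - 27 = -23$. A key general fact I would invoke is that the splitting field of a separable polynomial always contains $\sqrt{\mathrm{disc}}$; applied here, this gives $\sqrt{-23} \in E$, so $F = \mathbb{Q}(\sqrt{-23}) \subseteq E$, which proves the first assertion.

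Next, since $-23$ is not a square in $\mathbb{Q}$, the Galois group $\mathrm{Gal}(E/\mathbb{Q})$ equals the full symmetric group $S_3$ (this is the standard dichotomy for Galois groups of irreducible cubics: $A_3$ if the discriminant is a square, $S_3$ otherwise). Hence $[E:\mathbb{Q}] = 6$, and therefore $[E:F] = 6/2 = 3$. Under the Galois correspondence, the subfield $F$ corresponds to the unique index-two subgroup of $S_3$, namely $A_3 = \mathrm{Gal}(E/F)$. Since $A_3$ is normal in $S_3$, the extension $E/F$ is Galois, and since $A_3 \cong \mathbb{Z}/3\mathbb{Z}$ is cyclic, $E/F$ is a cyclic extension of degree $3$, completing the proof.

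The argument is essentially routine once the discriminant is identified, so the only real computation is verifying $\mathrm{disc}(f) = -23$; no step poses a genuine obstacle.
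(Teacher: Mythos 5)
Your proof is correct and takes essentially the same route as the paper: compute $\mathrm{disc}(x^3-x^2+1)=-23$, observe that the splitting field $E$ contains $\sqrt{-23}$, hence $F\subseteq E$, and then count degrees to get $[E:F]=3$. The paper arrives at the discriminant by expanding the Vandermonde determinant in terms of the roots $\lambda,z,\overline z$ rather than by the closed formula, and it deduces $[E:\mathbb{Q}]=6$ from $E\not\subset\mathbb{R}$ rather than from the discriminant's non-squareness, but the substance is identical.

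One small logical slip worth noting: you justify that $E/F$ is Galois by the normality of $A_3$ in $S_3$. That normality is what shows $F/\mathbb{Q}$ is Galois; the extension $E/F$ is Galois for the simpler reason that $E/\mathbb{Q}$ is Galois and $F$ is an intermediate field, so the top extension $E/F$ is automatically Galois with $\mathrm{Gal}(E/F)$ the corresponding subgroup (here $A_3$). The conclusion is unaffected, since any group of order $3$ is cyclic.
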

   \begin{proof}
    $E\supset K\supset\mathbb{Q}$ is a sequence of field extensions and it is clear that $E/\mathbb{Q}$ is a Galois extension, $[K:\mathbb{Q}]=3$ and $[E:K]\leq2$.
    $E\not\subset\mathbb{R}$ and $K\subset\mathbb{R}$ implies $E\neq K$ and so $[E:K]=2$ and $[E:\mathbb{Q}]=6$.\par
    The equation
    \begin{align*}
     x^3-x^2+1=(x-\lambda)(x^2+(\lambda-1)x+\lambda^2-\lambda)
    \end{align*}
    implies
    \begin{align*}
     z, \overline{z}=\frac{1-\lambda\pm\sqrt{-3\lambda^2+2\lambda+1}}{2}
    \end{align*}
    and by
    \begin{align*}
     (3\lambda^2-2\lambda)\sqrt{-3\lambda^2+2\lambda+1}&=\sqrt{(3\lambda^2-2\lambda)^2(-3\lambda^2+2\lambda+1)}\\
                                                       &=\sqrt{-23},
    \end{align*}
    $\mathbb{Q}(\lambda,\sqrt{-23})=\mathbb{Q}(\lambda,z)=E$ and so $E\supset F$.\par
    Now $[F:\mathbb{Q}]=2$ and so $E/F$ is a Galois extension of degree $3$.
    The Galois group $\mathrm{Gal}(E/F)$ is a group of order $3$ and so it is cyclic.
   \end{proof}
   Define
    \begin{align*}
     \begin{array}{cccc}
      \sigma\colon & E & \rightarrow & E\\
      & \lambda & \mapsto & z \\
      & z & \mapsto & \overline{z}\\
      & \overline{z} & \mapsto & \lambda
     \end{array}
    \end{align*}
   over $\mathbb{Q}$ and then $\sigma$ is a generator of $\mathrm{Gal}(E/F)$.
   \begin{lemma}
    There exists $u\in F^\times$ such that the order of $u$ in $F^\times/N_{E/F}(E^\times)$ is $3$.
   \end{lemma}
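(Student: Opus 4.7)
The plan is to exhibit $u = 2 \in F^\times$ and verify that its class in $F^\times / N_{E/F}(E^\times)$ has order exactly $3$. Since every $v \in F^\times$ is fixed by $\mathrm{Gal}(E/F)$, one has $N_{E/F}(v) = v^3$; hence the quotient $F^\times / N_{E/F}(E^\times)$ is annihilated by $3$, and it suffices to prove that $2 \notin N_{E/F}(E^\times)$.

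The first step is to analyze the factorization of $(2)$ in $\mathcal{O}_E$ by passing through $F$. In $F = \mathbb{Q}(\sqrt{-23})$, the element $\omega = (1+\sqrt{-23})/2$ has minimal polynomial $x^2 - x + 6$, which modulo $2$ factors as $x(x+1)$, so $(2) = \mathfrak{p}\bar{\mathfrak{p}}$ splits in $F$ with $\mathcal{O}_F/\mathfrak{p} \cong \mathbb{F}_2$. The element $\lambda$ is a primitive element for $E/F$ whose minimal polynomial over $\mathbb{Q}$, and hence over $F$ (since this polynomial is $\mathbb{Q}$-irreducible of degree $3$ and $[E:F]=3$), is $x^3 - x^2 + 1$, of discriminant $-23$. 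By Remark \ref{conductor and discriminant}, the conductor of $\mathcal{O}_F[\lambda] \subset \mathcal{O}_E$ contains $(-23)$, and since $\mathfrak{p}$ lies above $(2)$ it is coprime to this conductor. Remark \ref{construction of prime ideal factorization} then reduces the factorization of $\mathfrak{p}$ in $E$ to that of $x^3 - x^2 + 1 \equiv x^3 + x^2 + 1 \pmod{\mathfrak{p}}$ in $\mathbb{F}_2[x]$. This cubic has no roots in $\mathbb{F}_2$ and is therefore irreducible, so $\mathfrak{p}\mathcal{O}_E = \mathfrak{q}$ is prime of residue degree $3$; similarly $\bar{\mathfrak{p}}\mathcal{O}_E = \bar{\mathfrak{q}}$ with $\mathfrak{q} \neq \bar{\mathfrak{q}}$. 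Consequently $(2)\mathcal{O}_E = \mathfrak{q}\bar{\mathfrak{q}}$, and $\mathfrak{q}$ is the unique prime of $E$ above $\mathfrak{p}$, so $\sigma(\mathfrak{q}) = \mathfrak{q}$ for every $\sigma \in \mathrm{Gal}(E/F)$.

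The second step is to conclude via Lemma \ref{multiplicity}. Suppose for contradiction that $2 = N_{E/F}(\alpha)$ for some $\alpha \in E^\times$. Since $2 \notin U_F$, we have $\alpha \notin U_E$, so Lemma \ref{multiplicity} forces the multiplicity of $\mathfrak{q}$ in the factorization of $(N_{E/F}(\alpha)) = (2) = \mathfrak{q}\bar{\mathfrak{q}}$ to be a multiple of $[E:F] = 3$, contradicting the multiplicity $1$ established above. Therefore $2 \notin N_{E/F}(E^\times)$, and combined with the initial reduction, $u = 2$ has order exactly $3$ in $F^\times / N_{E/F}(E^\times)$.

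The main obstacle is the factorization of $\mathfrak{p}$ in $E/F$; one could alternatively invoke that $F$ has class number $3$ and $E$ is its Hilbert class field, so that $E/F$ is unramified and the Frobenius at $\mathfrak{p}$ determines the splitting, but the Kummer-theoretic route through Remark \ref{construction of prime ideal factorization} is more self-contained. The applicability of that remark turns on $\mathfrak{p}$ being coprime to the conductor of $\mathcal{O}_F[\lambda]$, which is automatic since the conductor contains the discriminant $-23$ while $\mathfrak{p}$ lies above $(2)$. Everything else is mechanical.
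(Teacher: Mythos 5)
Your proof is correct and follows essentially the same route as the paper: pick a rational prime that splits in $F$ with a prime $\mathfrak{p}$ above it remaining inert in $E$, then apply Lemma \ref{multiplicity} to show the prime is not a norm. The only difference is numerical — you take $u=2$ while the paper takes $u=3$ — and both choices work by identical reasoning.
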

   \begin{proof}
    First consider the prime factorization of the prime ideal $3\mathbb{Z}$ in $E$ and $F$.\par
    For the field extension $F/\mathbb{Q}$ and the primitive element $\theta=\sqrt{-23}$, the conductor $\mathfrak{J}$ of $\mathbb{Z}[\sqrt{-23}]$ in $\mathcal{O}_F=\mathbb{Z}\left[\frac{1+\sqrt{-23}}{2}\right]$ contains $2$ and so $3\mathbb{Z}$ and $\mathfrak{J}$ are relatively prime.
    By using Remark \ref{construction of prime ideal factorization}, there is a prime ideal factorization
    \begin{align*}
     3\mathcal{O}_F=\mathfrak{p}_1\mathfrak{p}_2\quad(\mathfrak{p}_1,\ \mathfrak{p}_2\subset\mathcal{O}_F\text{ are prime ideals}),
    \end{align*}
    which corresponds to
    \begin{align*}
     X^2+23=(X+1)(X+2)\in(\mathbb{Z}/3\mathbb{Z})[X].
    \end{align*}
    Now $F/\mathbb{Q}$ is a Galois extension and so $[\mathcal{O}_F/\mathfrak{p}_1:\mathbb{Z}/3\mathbb{Z}]=1$. 
    For the field extension $E/F$ and the primitive element $\theta'=\lambda$, by using Remerk \ref{conductor and discriminant}, the conductor $\mathfrak{J}'$ of $\mathcal{O}_F[\lambda]$ in $\mathcal{O}_E$ contains
    \begin{align*}
     \delta=\Delta_{E/F}(1,\lambda,\lambda^2)=\mathrm{det}\begin{pmatrix}
                                               1      & \lambda      & \lambda^2 \\
                                               1      & z            & z^2 \\
                                               1      & \overline{z} & \overline{z}^2
                                              \end{pmatrix}^2\\
                                             =(\lambda-z)^2(\lambda-\overline{z})^2(z-\overline{z})^2
                                             =-23
    \end{align*}
    by using the equation of Vandermonde's determinant and the discriminant of a polynomial for $x^3-x^2+1\in\mathbb{Z}[x]$.
    This implies $\mathfrak{p}_1$ is relatively prime to $\mathfrak{J}'$ and so by using Remark \ref{construction of prime ideal factorization},
    \begin{align*}
     \mathfrak{p}_1\mathcal{O}_E=\mathfrak{q}\quad(\mathfrak{q}\subset\mathcal{O}_F\text{ is a prime ideal}),
    \end{align*}
    which corresponds to
    \begin{align*}
     X^3-X^2+1\text{ is irreducible in }(\mathcal{O}_F/\mathfrak{p}_1)[X]=(\mathbb{Z}/3\mathbb{Z})[X].
    \end{align*}
    The pair $(p=3,\mathfrak{p}=\mathfrak{p}_1,\mathfrak{q})$ satisfies the following conditions.
    \begin{itemize}
     \item $\mathfrak{p}$ is over $p\mathbb{Z}$ and $\mathfrak{q}$ is over $\mathfrak{p}$.
     \item $\sigma(\mathfrak{q})=\mathfrak{q}$ for all $\sigma\in \mathrm{Gal}(E/F)$.
     \item the multiplicity of $\mathfrak{q}$ of the prime ideal factorization of the ideal $p\mathcal{O}_E$ is not a multiple of $3$.
    \end{itemize}
    Thus, by Lemma \ref{multiplicity}, there is no $\alpha\in E^\times\setminus U_E$ such that $N_{E/F}(\alpha)=p$.
    Moreover, since $p$ is not an unit of $\mathbb{Z}$, there is no $\alpha\in U_E$ such that $N_{E/F}(\alpha)=p$.
    Thus, $p\notin N_{E/F}(E^\times)$ and the order of $p$ in $F^\times/N_{E/F}(E^\times)$ is $3$.
   \end{proof}
   \begin{remark}\label{division algebra}
    Take a symbol $v$ with $v^3=u$ and define $B=\oplus_{i=0}^{2} v^{i}E$ as in Theorem \ref{construct division algebra}, and then $B$ is a division ring and also a central simple algebra over $F$ with $[B:F]=9$.\par
    Also, the order $\mathcal{O}:=\oplus_{i=0}^{2} v^{i}\mathcal{O}_E\subset B$ contains both $\lambda$ and $\frac{1}{\lambda}$.
   \end{remark}
   \begin{theorem}
    For the division algebra $B$ in Remark \ref{division algebra}, there is a positive anti-involution of the second kind on $B$ over $\mathbb{Q}$.
   \end{theorem}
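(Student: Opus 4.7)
The plan is to reduce the theorem, via Theorem~\ref{existence of positive anti-involution}, to the construction of any (not necessarily positive) anti-involution of the second kind on $B$. Since $B$ is a division algebra with center the CM-field $F=\mathbb{Q}(\sqrt{-23})$, that reduction theorem applies directly and upgrades any such $\phi$ to a positive one. Thus the entire task is to produce one explicit anti-involution $\phi\colon B\to B$ whose restriction to $F$ is the nontrivial element of $\mathrm{Gal}(F/\mathbb{Q})$.

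The idea is to exploit the fact that $E/\mathbb{Q}$ is Galois with group $S_3$, and $\mathrm{Gal}(E/F)=\langle\sigma\rangle$ is its unique subgroup of order $3$. Let $\tau\in\mathrm{Gal}(E/\mathbb{Q})$ be the involution fixing $\lambda$ (equivalently, complex conjugation in the embedding where $\lambda$ is the real root). The first step I would carry out is to record the identity
\[
 \tau\sigma\tau=\sigma^{-1}
\]
in $\mathrm{Gal}(E/\mathbb{Q})\cong S_3$, equivalently $\sigma^{-1}\tau=\tau\sigma$; this is nothing more than the standard fact that a transposition conjugates a $3$-cycle to its inverse, but it is exactly the compatibility between $\tau$ and the cyclic structure of $E/F$ that drives the construction.

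I then define $\phi\colon B\to B$ by $\phi|_E:=\tau$ and $\phi(v):=v$, extended $\mathbb{Q}$-linearly as an anti-homomorphism through the formula $\phi(v^{i}e):=\tau(e)v^{i}$ on $B=E\oplus vE\oplus v^{2}E$. Three things must be verified: (a) $\phi$ respects the defining relation $ev=v\sigma(e)$, which boils down to $\sigma^{-1}\tau=\tau\sigma$ combined with the derived rule $vf=\sigma^{-1}(f)v$ extracted from the defining relation; (b) $\phi$ respects $v^{3}=3$, which is immediate because $v^{3}\in\mathbb{Q}$ is fixed by $\phi$; (c) $\phi^{2}=\mathrm{id}$, which follows from $\tau^{2}=\mathrm{id}_{E}$ and $\phi(v)=v$. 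Of these, only (a) is a genuine calculation, and the identity from the previous paragraph is exactly what makes it go through.

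To conclude, $\phi|_{F}$ sends $\sqrt{-23}$ to $-\sqrt{-23}$ (visible from the relation $\sqrt{-23}=(3\lambda^{2}-2\lambda)\sqrt{-3\lambda^{2}+2\lambda+1}$ noted in the previous lemma, since $\tau$ fixes $\lambda$ but negates the imaginary factor), so $\phi$ is of the second kind. Theorem~\ref{existence of positive anti-involution} then promotes $\phi$ to a positive anti-involution of the second kind, finishing the proof. The only real obstacle in the plan is step~(a); everything else is bookkeeping once $\tau\sigma\tau=\sigma^{-1}$ is in hand.
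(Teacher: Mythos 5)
Your proof is correct and is essentially the same as the paper's. The paper defines the anti-involution by $\phi(v^i\cdot a)=v^i\cdot(\sigma^i\circ\tau)(a)$ and verifies the anti-homomorphism property using the identity $\sigma\circ\tau\circ\sigma=\tau$; your formula $\phi(v^ie)=\tau(e)v^i$ is the same map (since $\tau(e)v^i=v^i\sigma^i(\tau(e))$), and your identity $\tau\sigma\tau=\sigma^{-1}$ is equivalent, so the two write-ups are just different normalizations of the same construction, followed by the identical appeal to Theorem~\ref{existence of positive anti-involution} to upgrade to positivity.
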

   \begin{proof}
    Define
    \begin{align*}
     \begin{array}{ccccc}
      \phi\colon & B & \rightarrow & B & \\
      & v^i\cdot a & \mapsto & v^i\cdot(\sigma^i\circ\tau)(a) & (a\in E)
     \end{array}
    \end{align*}
    over $\mathbb{Q}$, where $\tau$ is the complex conjugation map.\par
    By using the equation $\sigma\circ\tau\circ\sigma=\tau$,
    \begin{align*}
     &\phi(1)=1,\\
     &\phi(\phi(v^i\cdot a))=\phi(v^i\cdot\sigma^i\circ\tau(a))=v^i\cdot(\sigma^i\circ\tau\circ\sigma^i\circ\tau)(a)=v^i\cdot a,
    \end{align*}
    \begin{align*}
     \phi(v^i\cdot a\cdot v^{i'}\cdot a')=\phi(v^i v^{i'}\cdot\sigma^{i'}(a)a')&=\phi(v^{i+i'}\cdot\sigma^{i'}(a)a')\\
                                                                               &=v^{i+i'}\cdot(\sigma^{i+i'}\circ\tau)(\sigma^{i'}(a)a')\\
                                                                               &=v^{i+i'}\cdot(\sigma^i\circ\tau)(a)\cdot(\sigma^{i+i'}\circ\tau)(a')
    \end{align*}
    and
    \begin{align*}
     \phi(v^{i'}\cdot a')\phi(v^i\cdot a)&=v^{i'}\cdot(\sigma^{i'}\circ\tau)(a')\cdot v^i\cdot(\sigma^i\circ\tau)(a)\\
                                         &=v^{i'} v^i\cdot(\sigma^{i+i'}\circ\tau)(a')\cdot(\sigma^i\circ\tau)(a)\\
                                         &=u^{i+i'}\cdot(\sigma^i\circ\tau)(a)\cdot(\sigma^{i+i'}\circ\tau)(a')
    \end{align*}
    hold and so $\phi$ is the anti-involution map over $\mathbb{Q}$.
    Also, the restriction map $\phi\lvert_F$ is the complex conjugation map and so $\phi$ is of the second kind.
    Thus, by composing with Theorem \ref{existence of positive anti-involution}, there exists a positive anti-involution map over $\mathbb{Q}$ of the second kind.
   \end{proof}
   By Proposition \ref{construction for Type 4}, there is a $9$-dimensional simple abelian variety with an automorphism corresponding to $\lambda$ for the case $d=3$, $e=2$, $e_0=1$ and $m=1$.\par
   The first dynamical degree of the automorphism is the square of the maximal absolute value of the conjugates of $\lambda$ and it is $1.1509\cdots^2=1.3247\cdots$.\\
   \\
  Therefore, the minimum value of the first dynamical degrees of automorphisms of $9$-dimensional simple abelian varieties is $1.3247\cdots$ except $1$, and realized on a simple abelian variety of Type 4.
  \begin{remark}
   By some deduction, we can check that $1.3247\cdots$ is the real root of $x^3-x-1\in\mathbb{Z}[x]$ and it is the smallest Pisot number (cf.\ \cite[Theorem 7.2.1]{BDGPS92}).
  \end{remark}

 \subsection{dimension $g=10$}
  Let $X$ be a $10$-dimensional simple abelian variety and define $B$, $K$, $K_0$, $d$, $e$ and $e_0$ as in Section \ref{Endomorphism algebras of simple abelian varieties}.
  \begin{flushleft}{\bf{Type 1}}\end{flushleft}\par
   Let $X$ be a $10$-dimensional simple abelian variety of Type 1 and $f\in\mathrm{End}(X)$ be an automorphism of $X$ which corresponds to $\alpha\in U_K$ as in Section \ref{The minimum value of the first dynamical degrees of automorphisms}.
   Now $[K:\mathbb{Q}]=1$ or $2$ or $5$ or $10$ by the restriction in Table \ref{table1}.\par
   Assuming that the first dynamical degree is less than $4$, by using similar deduction as in the $4$-dimensional case, the minimum value of the maximal absolute value of the roots of $P(x)$ is $2\mathrm{cos}\left(\frac{\pi}{5}\right)$ except $1$.
   The realizability is concerned in Section \ref{The minimum value of the first dynamical degrees of automorphisms}.
   Thus, the minimum value of the first dynamical degrees is $4\mathrm{cos}^2\left(\frac{\pi}{5}\right)=\left(\frac{1+\sqrt{5}}{2}\right)^2$ except $1$ for this type.
  \begin{flushleft}{\bf{Type 2, Type 3}}\end{flushleft}\par
   Let $X$ be a $10$-dimensional simple abelian variety of Type 2 or 3 and $f\in\mathrm{End}(X)$ be an automorphism of $X$ which corresponds to $\alpha\in B$.
   Now $[K:\mathbb{Q}]=1$ or $5$ by the restriction in Table \ref{table1}.\par
   Denote the minimal polynomial of $\alpha$ over $K$ by $p(x)$ and the minimal polynomial of $\alpha$ over $\mathbb{Q}$ by $P(x)$.
   If the degree of $p(x)$ is $1$, then $\alpha\in U_K$ and so it is reduced to Type 1 of $5$-dimensional case.
   Thus, $4\mathrm{cos}^2\left(\frac{\pi}{11}\right)$ is a lower bound except $1$ for this case.\par
   If the degree of $p(x)$ is $2$, it can be written as $p(x)=x^2+ax+b\in\mathcal{O}_K[x](b\in U_K)$ and consider the following 3 cases as in Section \ref{The minimum value of the first dynamical degrees of automorphisms}.
   Let $\{\sigma_i\}$ be the set of all $\mathbb{Q}$-embeddings $K\hookrightarrow\mathbb{C}$.\\
   $\bullet$ there exists $1\leq i\leq e$ such that $\abs{\sigma_i(b)}>1$\par
    The maximal absolute value of all conjugates of $b\in U_K$ is at least $2\mathrm{cos}\left(\frac{\pi}{11}\right)$ and so the maximal absolute value of the roots of $P(x)$ is at least $\sqrt{2\mathrm{cos}\left(\frac{\pi}{11}\right)}$.
    Thus, $2\mathrm{cos}\left(\frac{\pi}{11}\right)=1.9189\cdots$ is a lower bound except $1$ for this case.\\
   $\bullet$ $\sigma_i(b)=1$ for all $1\leq i\leq e$\par
    $\mathrm{max}\{\abs{\sigma_i(a)}\}$ must be larger than $2$ and close to $2$.
    By using a computer algebra, totally real algebraic integers of degree $5$ whose conjugates have modulus less than $2.1$ are enumerated in Table \ref{appendix 17}.
    $a=2.0264\cdots$ provides the minimum of the maximal absolute value.
    For this case, the maximal absolute value of the roots of $P(x)$ is the Lehmer number $1.1762\cdots$ and so $1.1762\cdots^2=1.3836\cdots$ is a lower bound except $1$ for this case.\\
   $\bullet$ $\sigma_i(b)=-1$ for all $1\leq i\leq e$\par
    $a=1$ provides the minimum value of the first dynamical degrees as in Section \ref{The minimum value of the first dynamical degrees of automorphisms}.
    Thus, $\left(\frac{1+\sqrt{5}}{2}\right)^2$ is a lower bound except $1$ for this case.\par
   Thus, $1.1762\cdots^2=1.3836\cdots$ is a lower bound of the first dynamical degrees except $1$ for these types.
   The realizability of $1.1762\cdots^2=1.3836\cdots$ as the first dynamical degree is due to \cite[Theorem 2.1 (i)]{DH22}.
  \begin{flushleft}{\bf{Type 4}}\end{flushleft}\par
   Let $X$ be a $10$-dimensional simple abelian variety and $f\in\mathrm{End}(X)$ be an automorphism of $X$ which corresponds to $\alpha\in B$.\par
   Now $d=1$ and $[K:\mathbb{Q}]=2$ or $4$ or $10$ or $20$ by the restriction in Table \ref{table1} and $\alpha\in U_K$.\par
   Now $[K_0:\mathbb{Q}]=1$ or $2$ or $5$ or $10$, respectively.
   Denote the minimal polynomial of $\alpha$ over $\mathbb{Q}$ by $P(x)$.\par
   If $\alpha\in K_0$, the minimum value of the square of the maximal absolute value of the roots of $P(x)$ is $4\mathrm{cos}^2\left(\frac{\pi}{5}\right)$ except $1$, by the deduction in Type 1.\par
   If $\alpha\in K\setminus K_0$, by denoting $a=\abs{\alpha}^2\in U_{K_0}$, as in Section \ref{The minimum value of the first dynamical degrees of automorphisms}, $\mathbb{Q}(\sqrt{a})$ is a totally real number field.
   By using similar deduction as in the $4$-dimensional case, the minimum value of the square of the maximal absolute value of the roots of $P(x)$ is $4\mathrm{cos}^2\left(\frac{\pi}{5}\right)$ except $1$.\\
   \\
  Therefore, the minimum value of the first dynamical degrees of automorphisms of $10$-dimensional simple abelian varieties is $1.3836\cdots$ except $1$, and realized on a simple abelian variety of Type 2.

\appendix
\setcounter{table}{0}
\renewcommand{\thetable}{\Alph{section}.\arabic{table}}
\section{Tables}\label{appendix a}
 The tables we use in this paper are lined up in this section.
 On creating from Table \ref{appendix 6} to Table \ref{appendix 17}, we use Python as a programming language.
 The program is available at \url{https://github.com/sugi0000/maximal-absolute-value}. 
\begin{table}[hbtp]
  \caption{The degree of $S_n(x)$ for small $n$}
  \label{appendix 1}
  \small
  \begin{tabular}{|c|c|c|}
   \hline
   degree of $P_n(x)$ & cyclotomic factor of $P_n(x)$ & degree of $S_n(x)$ \\
   \hline
   10  &             & 10\\
   \hline
   11  & $\Phi_2(x)$ & 10\\
   \hline
   12  & $\Phi_3(x)$ & 10\\
   \hline
   13  & $\Phi_2(x)$, $\Phi_8(x)$ & 8\\
   \hline
   14  & $\Phi_5(x)$ & 10\\
   \hline
   15  & $\Phi_2(x)$, $\Phi_3(x)$  & 12\\
   \hline
   16  &             & 16\\
   \hline
   17  & $\Phi_2(x)$ & 16\\
   \hline
   18  & $\Phi_3(x)$, $\Phi_{12}(x)$ & 12\\
   \hline
   19  & $\Phi_2(x)$, $\Phi_5(x)$ & 14\\
   \hline
   20  &             & 20\\
   \hline
   21  & $\Phi_2(x)$, $\Phi_3(x)$, $\Phi_8(x)$ & 14\\
   \hline
   22  &             & 22\\
   \hline
   23  & $\Phi_2(x)$ & 22\\
   \hline
   24  & $\Phi_3(x)$, $\Phi_5(x)$ & 18\\
   \hline
   25  & $\Phi_2(x)$, $\Phi_{18}(x)$ & 18\\
   \hline
   26  &             & 26\\
   \hline
   27  & $\Phi_2(x)$, $\Phi_3(x)$ & 24\\
   \hline
   28  &             & 28\\
   \hline
   29  & $\Phi_2(x)$, $\Phi_5(x)$, $\Phi_8(x)$ & 20\\
   \hline
   30  & $\Phi_3(x)$, $\Phi_{12}(x)$ & 24\\
   \hline
   31  & $\Phi_2(x)$ & 30\\
   \hline
   32  &             & 32\\
   \hline
   33  & $\Phi_2(x)$, $\Phi_3(x)$ & 30\\
   \hline
  \end{tabular}
 \end{table}

\clearpage
\begin{table}[hbtp]
  \caption{The degree of $S'_m(x)$ for small $m$}
  \label{appendix 2}
  \small
  \begin{tabular}{|c|c|c|}
   \hline
   degree of $Q_m(x)$ & cyclotomic factor of $Q_m(x)$ & degree of $S'_m(x)$ \\
   \hline
   4  &             & 4\\
   \hline
   5  & $\Phi_2(x)$ & 4\\
   \hline
   6  &             & 6\\
   \hline
   7  & $\Phi_2(x)$ & 6\\
   \hline
   8  &             & 8\\
   \hline
   9  & $\Phi_2(x)$ & 8\\
   \hline
   10  & $\Phi_8(x)$ & 6\\
   \hline
   11  & $\Phi_2(x)$ & 10\\
   \hline
   12  &             & 12\\
   \hline
   13  & $\Phi_2(x)$, $\Phi_{12}(x)$ & 8\\
   \hline
   14  &             & 14\\
   \hline
   15  & $\Phi_2(x)$ & 14\\
   \hline
   16  &             & 16\\
   \hline
   17  & $\Phi_2(x)$, $\Phi_{18}(x)$ & 10\\
   \hline
   18  & $\Phi_8(x)$ & 14\\
   \hline
   19  & $\Phi_2(x)$ & 18\\
   \hline
   20  &             & 20\\
   \hline
   21  & $\Phi_2(x)$ & 20\\
   \hline
   22  &             & 22\\
   \hline
   23  & $\Phi_2(x)$ & 22\\
   \hline
   24  & $\Phi_{30}(x)$ & 16\\
   \hline
   25  & $\Phi_2(x)$, $\Phi_{12}(x)$ & 20\\
   \hline
   26  & $\Phi_8(x)$ & 22\\
   \hline
   27  & $\Phi_2(x)$ & 26\\
   \hline
   28  &             & 28\\
   \hline
   29  & $\Phi_2(x)$ & 28\\
   \hline
   30  &             & 30\\
   \hline
   31  & $\Phi_2(x)$ & 30\\
   \hline
  \end{tabular}
 \end{table}

\clearpage
\begin{table}[hbtp]
  \caption{The degree of $S_n(x)$}
  \label{appendix 3}
  \tiny
  \setlength{\tabcolsep}{3pt}
  \begin{tabular}{|c|c|c|c|c|c|c|c|c|c|}
   \hline
   \begin{tabular}{c}degree of\\ $P_n(x)$\end{tabular} & \begin{tabular}{c}degree of\\ $S_n(x)$\end{tabular} & \begin{tabular}{c}degree of\\ $P_n(x)$\end{tabular} & \begin{tabular}{c}degree of\\ $S_n(x)$\end{tabular} & \begin{tabular}{c}degree of\\ $P_n(x)$\end{tabular} & \begin{tabular}{c}degree of\\ $S_n(x)$\end{tabular} & \begin{tabular}{c}degree of\\ $P_n(x)$\end{tabular} & \begin{tabular}{c}degree of\\ $S_n(x)$\end{tabular} & \begin{tabular}{c}degree of\\ $P_n(x)$\end{tabular} & \begin{tabular}{c}degree of\\ $S_n(x)$\end{tabular}\\
   \hline
   11  & 10 & 90 & 84 & 168 & 166 & 247 & 246 & 327 & 324 \\
   \hline
   12  & 10 & 91 & 90 & 171 & 168 & 248 & 240 & 328 & 328\\
   \hline
   13  & 8 & 92 & 92 & 172 & 172 & 251 & 250 & 330 & 324\\
   \hline
   15  & 12 & 93 & 86 & 173 & 168 & 252 & 250 & 331 & 324\\
   \hline
   16  & 16 & 95 & 94 & 174 & 164 & 253 & 248 & 332 & 332\\
   \hline
   18  & 12 & 96 & 94 & 175 & 174 & 255 & 252 & 333 & 326\\
   \hline
   19  & 14 & 99 & 92 & 176 & 176 & 256 & 256 & 335 & 334\\
   \hline
   20  & 20 & 100 & 100 & 179 & 174 & 258 & 252 & 336 & 334\\
   \hline
   21  & 14 & 101 & 96 & 180 & 178 & 259 & 248 & 339 & 332\\
   \hline
   23  & 22 & 102 & 96 & 181 & 176 & 260 & 260 & 340 & 340\\
   \hline
   24  & 18 & 103 & 102 & 183 & 180 & 261 & 254 & 341 & 336\\
   \hline
   27  & 24 & 104 & 100 & 184 & 180 & 263 & 262 & 342 & 336\\
   \hline
   28  & 28 & 107 & 106 & 186 & 180 & 264 & 258 & 343 & 342\\
   \hline
   29  & 20 & 108 & 106 & 187 & 180 & 267 & 264 & 344 & 340\\
   \hline
   30  & 24 & 109 & 100 & 188 & 180 & 268 & 268 & 347 & 346\\
   \hline
   31  & 30 & 111 & 108 & 189 & 178 & 269 & 260 & 348 & 346\\
   \hline
   32  & 32 & 112 & 112 & 191 & 190 & 270 & 264 & 349 & 334\\
   \hline
   35  & 34 & 114 & 104 & 192 & 190 & 271 & 270 & 351 & 348\\
   \hline
   36  & 34 & 115 & 108 & 195 & 192 & 272 & 272 & 352 & 352\\
   \hline
   37  & 32 & 116 & 116 & 196 & 196 & 275 & 274 & 354 & 344\\
   \hline
   39  & 32 & 117 & 110 & 197 & 192 & 276 & 274 & 355 & 354\\
   \hline
   40  & 40 & 119 & 114 & 198 & 192 & 277 & 266 & 356 & 356\\
   \hline
   42  & 36 & 120 & 118 & 199 & 194 & 279 & 272 & 357 & 350\\
   \hline
   43  & 36 & 123 & 120 & 200 & 200 & 280 & 280 & 359 & 354\\
   \hline
   44  & 40 & 124 & 120 & 203 & 202 & 282 & 276 & 360 & 358\\
   \hline
   45  & 38 & 125 & 120 & 204 & 198 & 283 & 282 & 363 & 360\\
   \hline
   47  & 46 & 126 & 120 & 205 & 194 & 284 & 280 & 364 & 360\\
   \hline
   48  & 46 & 127 & 126 & 207 & 204 & 285 & 278 & 365 & 360\\
   \hline
   51  & 48 & 128 & 120 & 208 & 208 & 287 & 286 & 366 & 360\\
   \hline
   52  & 52 & 131 & 130 & 210 & 204 & 288 & 286 & 367 & 360\\
   \hline
   53  & 48 & 132 & 130 & 211 & 210 & 291 & 288 & 368 & 360\\
   \hline
   54  & 44 & 133 & 122 & 212 & 212 & 292 & 292 & 371 & 370\\
   \hline
   55  & 54 & 135 & 132 & 213 & 206 & 293 & 288 & 372 & 370\\
   \hline
   56  & 56 & 136 & 136 & 215 & 214 & 294 & 284 & 373 & 368\\
   \hline
   59  & 54 & 138 & 132 & 216 & 214 & 295 & 288 &  & \\
   \hline
   60  & 58 & 139 & 134 & 219 & 212 & 296 & 296 &  & \\
   \hline
   61  & 50 & 140 & 140 & 220 & 220 & 299 & 294 &  & \\
   \hline
   63  & 60 & 141 & 134 & 221 & 216 & 300 & 298 &  & \\
   \hline
   64  & 60 & 143 & 142 & 222 & 216 & 301 & 296 &  & \\
   \hline
   66  & 60 & 144 & 138 & 223 & 216 & 303 & 300 &  & \\
   \hline
   67  & 66 & 147 & 144 & 224 & 220 & 304 & 300 &  & \\
   \hline
   68  & 60 & 148 & 148 & 227 & 226 & 306 & 300 &  & \\
   \hline
   69  & 58 & 149 & 140 & 228 & 226 & 307 & 306 &  & \\
   \hline
   71  & 70 & 150 & 144 & 229 & 220 & 308 & 300 &  & \\
   \hline
   72  & 70 & 151 & 144 & 231 & 228 & 309 & 298 &  & \\
   \hline
   75  & 72 & 152 & 152 & 232 & 232 & 311 & 310 &  & \\
   \hline
   76  & 76 & 155 & 154 & 234 & 224 & 312 & 310 &  & \\
   \hline
   77  & 72 & 156 & 154 & 235 & 234 & 315 & 312 &  & \\
   \hline
   78  & 72 & 157 & 152 & 236 & 236 & 316 & 316 &  & \\
   \hline
   79  & 68 & 159 & 152 & 237 & 230 & 317 & 312 &  & \\
   \hline
   80  & 80 & 160 & 160 & 239 & 234 & 318 & 312 &  & \\
   \hline
   83  & 82 & 162 & 156 & 240 & 238 & 319 & 314 &  & \\
   \hline
   84  & 78 & 163 & 162 & 243 & 240 & 320 & 320 &  & \\
   \hline
   85  & 80 & 164 & 160 & 244 & 240 & 323 & 322 &  & \\
   \hline
   87  & 84 & 165 & 158 & 245 & 240 & 324 & 318 &  & \\
   \hline
   88  & 88 & 167 & 166 & 246 & 240 & 325 & 320 &  & \\
   \hline
  \end{tabular}
 \end{table}

\begin{table}[hbtp]
  \caption{The degree of $S'_m(x)$}
  \label{appendix 4}
  \tiny
  \setlength{\tabcolsep}{3pt}
  \begin{tabular}{|c|c|c|c|c|c|c|c|c|c|}
   \hline
   \begin{tabular}{c}degree of\\ $Q_m(x)$\end{tabular} & \begin{tabular}{c}degree of\\ $S'_m(x)$\end{tabular} & \begin{tabular}{c}degree of\\ $Q_m(x)$\end{tabular} & \begin{tabular}{c}degree of\\ $S'_m(x)$\end{tabular} & \begin{tabular}{c}degree of\\ $Q_m(x)$\end{tabular} & \begin{tabular}{c}degree of\\ $S'_m(x)$\end{tabular} & \begin{tabular}{c}degree of\\ $Q_m(x)$\end{tabular} & \begin{tabular}{c}degree of\\ $S'_m(x)$\end{tabular} &\begin{tabular}{c}degree of\\ $Q_m(x)$\end{tabular} & \begin{tabular}{c}degree of\\ $S'_m(x)$\end{tabular} \\
   \hline
   7  & 6 & 95 & 94 & 181 & 176 & 267 & 266 & 355 & 354\\
   \hline
   10  & 6 & 97 & 92 & 183 & 182 & 271 & 270 & 359 & 352\\
   \hline
   11  & 10 & 98 & 94 & 186 & 182 & 274 & 270 & 361 & 356\\
   \hline
   13  & 8 & 99 & 98 & 187 & 186 & 275 & 274 & 362 & 358\\
   \hline
   15  & 14 & 103 & 102 & 191 & 190 & 277 & 272 & 363 & 362\\
   \hline
   18  & 14 & 106 & 102 & 193 & 188 & 279 & 278 & 367 & 366\\
   \hline
   19  & 18 & 107 & 100 & 194 & 190 & 282 & 278 & 370 & 366\\
   \hline
   23  & 22 & 109 & 104 & 195 & 194 & 283 & 282 & 371 & 370\\
   \hline
   25  & 20 & 111 & 110 & 199 & 198 & 287 & 280 & 373 & 368\\
   \hline
   26  & 22 & 114 & 102 & 202 & 198 & 289 & 284 & 375 & 374\\
   \hline
   27  & 26 & 115 & 114 & 203 & 202 & 290 & 286 & 378 & 374\\
   \hline
   31  & 30 & 119 & 118 & 205 & 200 & 291 & 290 &  & \\
   \hline
   34  & 30 & 121 & 116 & 207 & 206 & 295 & 294 &  & \\
   \hline
   35  & 28 & 122 & 118 & 210 & 206 & 298 & 294 &  & \\
   \hline
   37  & 32 & 123 & 122 & 211 & 210 & 299 & 298 &  & \\
   \hline
   39  & 38 & 127 & 126 & 215 & 208 & 301 & 296 &  & \\
   \hline
   42  & 38 & 130 & 126 & 217 & 212 & 303 & 302 &  & \\
   \hline
   43  & 42 & 131 & 130 & 218 & 214 & 306 & 302 &  & \\
   \hline
   47  & 46 & 133 & 128 & 219 & 218 & 307 & 306 &  & \\
   \hline
   49  & 44 & 135 & 134 & 223 & 222 & 311 & 310 &  & \\
   \hline
   50  & 46 & 138 & 134 & 226 & 222 & 313 & 308 &  & \\
   \hline
   51  & 50 & 139 & 138 & 227 & 226 & 314 & 310 &  & \\
   \hline
   55  & 54 & 143 & 136 & 229 & 224 & 315 & 314 &  & \\
   \hline
   58  & 54 & 145 & 140 & 231 & 230 & 319 & 318 &  & \\
   \hline
   59  & 58 & 146 & 142 & 234 & 222 & 322 & 318 &  & \\
   \hline
   61  & 56 & 147 & 146 & 235 & 234 & 323 & 316 &  & \\
   \hline
   63  & 62 & 151 & 150 & 239 & 238 & 325 & 320 &  & \\
   \hline
   66  & 62 & 154 & 150 & 241 & 236 & 327 & 326 &  & \\
   \hline
   67  & 66 & 155 & 154 & 242 & 238 & 330 & 326 &  & \\
   \hline
   71  & 64 & 157 & 152 & 243 & 242 & 331 & 330 &  & \\
   \hline
   73  & 68 & 159 & 158 & 247 & 246 & 335 & 334 &  & \\
   \hline
   74  & 70 & 162 & 158 & 250 & 246 & 337 & 332 &  & \\
   \hline
   75  & 74 & 163 & 162 & 251 & 244 & 338 & 334 &  & \\
   \hline
   79  & 78 & 167 & 166 & 253 & 248 & 339 & 338 &  & \\
   \hline
   82  & 78 & 169 & 164 & 255 & 254 & 343 & 342 &  & \\
   \hline
   83  & 82 & 170 & 166 & 258 & 254 & 346 & 342 &  & \\
   \hline
   85  & 80 & 171 & 170 & 259 & 258 & 347 & 346 &  & \\
   \hline
   87  & 86 & 175 & 174 & 263 & 262 & 349 & 344 &  & \\
   \hline
   90  & 86 & 178 & 174 & 265 & 260 & 351 & 350 &  & \\
   \hline
   91  & 90 & 179 & 172 & 266 & 262 & 354 & 342 &  & \\
   \hline
  \end{tabular}
 \end{table}

\clearpage
\begin{table}[hbtp]
   \begin{threeparttable}[h]
   \caption{Calculation of the maximal absolute value of the roots for the case of Type 4 of $4$-dimenisonal}
   \label{appendix 5}
   \footnotesize
   \begin{tabular}{|c|c|c|c|c|}
    \hline
    $D$ & $(p,q)$ & $t$ & $(x^2+sx+t)(x^2+\overline{s}x+\overline{t})$ & maximal absolute value \\
    \hline\hline
    $7$ & $(-2,1)$   & 1  & $x^4-3x^3+6x^2-3x+1$ & $2.1022\cdots$\\
    $7$ & $(-2,1)$   & -1 & $x^4-3x^3+2x^2+3x+1$ & $2.1889\cdots$\\
    $7$ & $(-1,1)$   & 1  & $x^4-x^3+4x^2-x+1$   & $1.8832\cdots$\\
    $7$ & $(-1,1)$   & -1 & $x^4-x^3+x+1$        & $1.3722\cdots$\\
    $7$ & $(0,1)$    & 1  & $x^4+x^3+4x^2+x+1$   & $1.8832\cdots$\\
    $7$ & $(0,1)$    & -1 & $x^4+x^3-x+1$        & $1.3722\cdots$\\
    $7$ & $(1,1)$    & 1  & $x^4+3x^3+6x^2+3x+1$ & $2.1022\cdots$\\
    $7$ & $(1,1)$    & -1 & $x^4+3x^3+2x^2-3x+1$ & $2.1889\cdots$\\
    $11$ & $(-1,1)$  & 1  & $x^4-x^3+5x^2-x+1$   & $2.1537\cdots$\\
    $11$ & $(-1,1)$  & -1 & $x^4-x^3+x^2+x+1$    & $1.4675\cdots$\\
    $11$ & $(0,1)$   & 1  & $x^4+x^3+5x^2+x+1$   & $2.1537\cdots$\\
    $11$ & $(0,1)$   & -1 & $x^4+x^3+x^2-x+1$    & $1.4675\cdots$\\
    $15$ & $(-1,1)$  & 1  & $x^4-x^3+6x^2-x+1$   & $2.3869\cdots$\\
    $15$ & $(-1,1)$  & -1 & $x^4-x^3+2x^2+x+1$   & $1.6180\cdots$\\
    $15$ & $(0,1)$   & 1  & $x^4+x^3+6x^2+x+1$   & $2.3869\cdots$\\
    $15$ & $(0,1)$   & -1 & $x^4+x^3+2x^2-x+1$   & $1.6180\cdots$\\
    $2$ & $(-1,1)$   & 1  & $x^4-2x^3+5x^2-2x+1$ & $2.0322\cdots$\\
    $2$ & $(-1,1)$   & -1 & $x^4-2x^3+x^2+2x+1$  & $1.8039\cdots$\\
    $2$ & $(0,1)$    & 1  & $x^4+4x^2+1$         & $1.9318\cdots$\\
    $2$ & $(0,1)$    & -1 & $x^4+1$              & $1$\\
    $2$ & $(1,1)$    & 1  & $x^4+2x^3+5x^2+2x+1$ & $2.0322\cdots$\\
    $2$ & $(-1,1)$   & -1 & $x^4+2x^3+x^2-2x+1$  & $1.8039\cdots$\\
    $3$ & $(1,1)$  & $1$            & $x^4+3x^3+5x^2+3x+1$ & $1.7220\cdots$\\
    $3$ & $(1,1)$  & $\omega$\tnote{*1}     & $x^4+3x^3+4x^2+3x+1$ & $1$\\
    $3$ & $(1,1)$  & $\omega^2$     & $x^4+3x^3+2x^2+1$    & $1.7220\cdots$\\
    $3$ & $(1,1)$  & $\omega^3=-1$  & $x^4+3x^3+x^2-3x+1$  & $2.0758\cdots$\\
    $3$ & $(1,1)$  & $\omega^4$     & $x^4+3x^3+2x^2-3x+1$ & $2.1889\cdots$\\
    $3$ & $(1,1)$  & $\omega^5$     & $x^4+3x^3+4x^2+1$    & $2.0758\cdots$\\
    $3$ & $(1,0)$  & $\omega$       & $x^4+2x^3+2x^2+x+1$  & $1.3122\cdots$\\
    $3$ & $(1,0)$  & $\omega^2$     & $x^4+2x^3-x+1$       & $1.5392\cdots$\\
    $3$ & $(1,0)$  & $\omega^4$     & $x^4+2x^3-x+1$       & $1.5392\cdots$\\
    $3$ & $(1,0)$  & $\omega^5$     & $x^4+2x^3+2x^2+x+1$  & $1.3122\cdots$\\
    $3$ & $(2,0)$  & $\omega$       & $x^4+4x^3+5x^2+2x+1$ & $1.9318\cdots$\\
    $3$ & $(2,0)$  & $\omega^2$     & $x^4+4x^3+3x^2-2x+1$ & $2.2966\cdots$\\
    $3$ & $(2,0)$  & $\omega^4$     & $x^4+4x^3+3x^2-2x+1$ & $2.2966\cdots$\\
    $3$ & $(2,0)$  & $\omega^5$     & $x^4+4x^3+5x^2+2x+1$ & $1.9318\cdots$\\
    $1$ & $(1,1)$  & $1$            & $x^4+2x^3+4x^2+2x+1$ & $1.7000\cdots$\\
    $1$ & $(1,1)$  & $i$            & $x^4+2x^3+2x^2+2x+1$ & $1$\\
    $1$ & $(1,1)$  & $-1$           & $x^4+2x^3-2x+1$      & $1.7000\cdots$\\
    $1$ & $(1,1)$  & $-i$           & $x^4+2x^3+2x^2-2x+1$ & $1.9318\cdots$\\
    $1$ & $(1,0)$  & $i$            & $x^4+2x^3+x^2+1$     & $1.4425\cdots$\\
    $1$ & $(1,0)$  & $-i$           & $x^4+2x^3+x^2+1$      & $1.4425\cdots$\\
    $1$ & $(2,0)$  & $i$            & $x^4+4x^3+4x^2+1$    & $2.1474\cdots$\\
    $1$ & $(2,0)$  & $-i$           & $x^4+4x^3+4x^2+1$    & $2.1474\cdots$\\
   \hline
   \end{tabular}
  \begin{tablenotes}
  \item{*1} Denote $\omega=\frac{1+\sqrt{3}i}{2}$
  \end{tablenotes}
  \end{threeparttable}
  \end{table}

\begin{table}[hbtp]
   \caption{totally real, cubic algebraic integer, which is larger than $2$, whose conjugates have modulus$<2.25$}
   \label{appendix 6}
   \small
   \begin{tabular}{|c|c|}
    \hline
     algebraic integer & polynomial \\
    \hline\hline
    $2.1149\cdots$ & $x^3-4x-1$\\
    $2.1700\cdots$ & $x^3-x^2-3x+1$\\
    $2.1986\cdots$ & $x^3-x^2-4x+3$\\
    $2.2143\cdots$ & $x^3-4x-2$\\
    $2.2469\cdots$ & $x^3-2x^2-x+1$\\
   \hline
   \end{tabular}
  \end{table}

\begin{table}[hbtp]
   \caption{totally positive, septic algebraic unit, which is larger than $4$, whose conjugates are in $(0,4.04)$}
   \label{appendix 7}
   \small
   \begin{tabular}{|c|c|}
    \hline
     algebraic unit & polynomial \\
    \hline\hline
    $4.0333\cdots$ & $x^7-14x^6+77x^5-211x^4+301x^3-210x^2+56x-1$\\
    $4.0341\cdots$ & $x^7-14x^6+76x^5-200x^4+259x^3-146x^2+24x-1$\\
   \hline
   \end{tabular}
  \end{table}

\begin{table}[hbtp]
   \caption{totally real, quartic algebraic integer, which is larger than $2$, whose conjugates have modulus$<2.1$}
   \label{appendix 8}
   \small
   \begin{tabular}{|c|c|}
    \hline
     algebraic integer & minimal polynomial \\
    \hline\hline
    $2.0614\cdots$ & $x^4-4x^2-x+1$\\
    $2.0743\cdots$ & $x^4-5x^2+3$\\
    $2.0952\cdots$ & $x^4-x^3-3x^2+x+1$\\
   \hline
   \end{tabular}
  \end{table}

\begin{table}[hbtp]
   \caption{irreducible quartic polynomial, all roots are in $E$, one root is not inside $[-2,2]$, identifying $R(X)$ with $R(-X)$}
   \label{appendix 9}
   \footnotesize
   \begin{tabular}{|c|c|c|c|}
    \hline
    \begin{tabular}{c} condition\\ (\ref{equation1})\end{tabular} & polynomial $R(X)$ & polynomial $x^4R(x+\frac{1}{x})$ & \begin{tabular}{c}maximal\\ absolute value\end{tabular} \\
    \hline\hline
    N & $X^4-2X^3-2X^2+5X-1$ & \begin{tabular}{c}$x^8-2x^7+2x^6-x^5+x^4$\\ $-x^3+2x^2-2x+1$\end{tabular} & $1.2150\cdots$ \\
    N & $X^4-X^3-4X^2+2X+5$ & \begin{tabular}{c}$x^8-x^7-x^5+3x^4-x^3-x+1$\end{tabular} & $1.1837\cdots$ \\
    N & $X^4-X^3-4X^2+4X+2$ & \begin{tabular}{c}$x^8-x^7+x^5+x^3-x+1$\end{tabular} & $1.2744\cdots$ \\
    N & $X^4-X^3-3X^2+X+3$ & \begin{tabular}{c}$x^8-x^7+x^6-2x^5+3x^4$\\ $-2x^3+x^2-x+1$\end{tabular} & $1.2522\cdots$ \\
    N & $X^4-X^3-3X^2+3X-1$ & \begin{tabular}{c}$x^8-x^7+x^6-x^4+x^2-x+1$\end{tabular} & $1.1705\cdots$ \\
    N & $X^4-X^3-3X^2+3X+1$ & \begin{tabular}{c}$x^8-x^7+x^6+x^4+x^2-x+1$\end{tabular} & $1.2196\cdots$ \\
    N & $X^4-X^3-2X^2+1$ & \begin{tabular}{c}$x^8-x^7+2x^6-3x^5+3x^4$\\ $-3x^3+2x^2-x+1$\end{tabular} & $1.2408\cdots$ \\
    N & $X^4-X^3-2X^2+X+2$ & \begin{tabular}{c}$x^8-x^7+2x^6-2x^5+4x^4$\\ $-2x^3+2x^2-x+1$\end{tabular} & $1.2474\cdots$ \\
    N & $X^4-X^3-2X^2+2X-1$ & \begin{tabular}{c}$x^8-x^7+2x^6-x^5+x^4$\\ $-x^3+2x^2-x+1$\end{tabular} & $1.2722\cdots$ \\
    N & $X^4-5X^2+7$ & \begin{tabular}{c}$x^8-x^6+3x^4-x^2+1$\end{tabular} & $1.2406\cdots$ \\
    N & $X^4-4X^2-X+3$ & \begin{tabular}{c}$x^8-x^5+x^4-x^3+1$\end{tabular} & $1.1692\cdots$ \\
    Y & $X^4-4X^2-1$ & \begin{tabular}{c}$x^8-3x^4+1$\end{tabular} & $1.2720\cdots$ \\
    N & $X^4-4X^2+5$ & \begin{tabular}{c}$x^8+3x^4+1$\end{tabular} & $1.2720\cdots$ \\
    N & $X^4-3X^2+3$ & \begin{tabular}{c}$x^8+x^6+3x^4+x^2+1$\end{tabular} & $1.2406\cdots$ \\
   \hline
   \end{tabular}
  \end{table}

\clearpage
\begin{table}[hbtp]
   \caption{irreducible quartic polynomial, all roots are in $E_1$, one root is not written as $si$ with $s\in[-2,2]$, identifying $R(X)$ with $R(-X)$}
   \label{appendix 10}
   \footnotesize
   \begin{tabular}{|c|c|c|c|}
    \hline
    \begin{tabular}{c} condition\\ (\ref{equation1})\end{tabular} & polynomial $R(X)$ & polynomial $x^4R(x-\frac{1}{x})$ & \begin{tabular}{c}maximal\\ absolute value\end{tabular} \\
    \hline\hline
    N & $X^4-X^3+3X^2-2X+1$ & \begin{tabular}{c}$x^8-x^7-x^6+x^5+x^4$\\ $-x^3-x^2+x+1$\end{tabular} & $1.2245\cdots$ \\
    N & $X^4-X^3+4X^2-3X+1$ & \begin{tabular}{c}$x^8-x^7-x^4+x+1$\end{tabular} & $1.2306\cdots$ \\
    N & $X^4-X^3+4X^2-3X+2$ & \begin{tabular}{c}$x^8-x^7+x+1$\end{tabular} & $1.2612\cdots$ \\
    N & $X^4-X^3+4X^2-2X+2$ & \begin{tabular}{c}$x^8-x^7+x^5-x^3+x+1$\end{tabular} & $1.2397\cdots$ \\
    N & $X^4-X^3+4X^2-2X+3$ & \begin{tabular}{c}$x^8-x^7+x^5+x^4-x^3+x+1$\end{tabular} & $1.1837\cdots$ \\
    N & $X^4-X^3+5X^2-4X+3$ & \begin{tabular}{c}$x^8-x^7+x^6-x^5+x^4$\\ $+x^3+x^2+x+1$\end{tabular} & $1.2788\cdots$ \\
    N & $X^4-X^3+5X^2-3X+4$ & \begin{tabular}{c}$x^8-x^7+x^6+x^2+x+1$\end{tabular} & $1.2272\cdots$ \\
    N & $X^4-X^3+5X^2-3X+5$ & \begin{tabular}{c}$x^8-x^7+x^6+x^4+x^2+x+1$\end{tabular} & $1.2734\cdots$ \\
    N & $X^4-X^3+6X^2-3X+8$ & \begin{tabular}{c}$x^8-x^7+2x^6+2x^4+2x^2+x+1$\end{tabular} & $1.2474\cdots$ \\
    N & $X^4+2X^2-X+1$ & \begin{tabular}{c}$x^8-2x^6-x^5+3x^4+x^3-2x^2+1$\end{tabular} & $1.2553\cdots$ \\
    N & $X^4+3X^2-X+1$ & \begin{tabular}{c}$x^8-x^6-x^5+x^4+x^3-x^2+1$\end{tabular} & $1.1932\cdots$ \\
    N & $X^4+3X^2-X+2$ & \begin{tabular}{c}$x^8-x^6-x^5+2x^4+x^3-x^2+1$\end{tabular} & $1.2461\cdots$ \\
    N & $X^4+3X^2+3$ & \begin{tabular}{c}$x^8-x^6+3x^4-x^2+1$\end{tabular} & $1.2406\cdots$ \\
    N & $X^4+4X^2-X+1$ & \begin{tabular}{c}$x^8-x^5-x^4+x^3+1$\end{tabular} & $1.2512\cdots$ \\
    N & $X^4+4X^2-X+2$ & \begin{tabular}{c}$x^8-x^5+x^3+1$\end{tabular} & $1.2331\cdots$ \\
    N & $X^4+4X^2-X+3$ & \begin{tabular}{c}$x^8-x^5+x^4+x^3+1$\end{tabular} & $1.2450\cdots$ \\
    Y & $X^4+4X^2-1$ & \begin{tabular}{c}$x^8-3x^4+1$\end{tabular} & $1.2720\cdots$ \\
    N & $X^4+4X^2+5$ & \begin{tabular}{c}$x^8+3x^4+1$\end{tabular} & $1.2720\cdots$ \\
    N & $X^4+5X^2+7$ & \begin{tabular}{c}$x^8+x^6+3x^4+x^2+1$\end{tabular} & $1.2406\cdots$ \\
   \hline
   \end{tabular}
  \end{table}

\begin{table}[hbtp]
   \caption{quadratic polynomial of the form (\ref{equation2}), all roots are in $E_2$, identifying $R_1(z)$ with $R_1(-z)$}
   \label{appendix 11}
   \small
   \begin{tabular}{|c|c|c|c|}
    \hline
    polynomial $R_1(z)$ & polynomial $x^2R_1(x+\frac{i}{x})$ & \begin{tabular}{c}maximal\\ absolute value\end{tabular} \\
    \hline\hline
    $z^2-(2+2i)z+2i$ & \begin{tabular}{c}$x^4-(2+2i)x^3+4ix^2+(2-2i)x-1$\end{tabular} & $1$ \\
    $z^2-(1+i)z-i$ & \begin{tabular}{c}$x^4-(1+i)x^3+ix^2+(1-i)x-1$\end{tabular} & $1$ \\
    $z^2+(-1-i)z$ & \begin{tabular}{c}$x^4-(1+i)x^3+2ix^2+(1-i)x-1$\end{tabular} & $1$ \\
    $z^2-1-2i$ & \begin{tabular}{c}$x^4-x^2-1$\end{tabular} & $1.2720\cdots$ \\
    $z^2-4i$ & \begin{tabular}{c}$x^4-2ix^2-1$\end{tabular} & $1$ \\
    $z^2-3i$ & \begin{tabular}{c}$x^4-ix^2-1$\end{tabular} & $1$ \\
    $z^2-2i$ & \begin{tabular}{c}$x^4-1$\end{tabular} & $1$ \\
    $z^2-i$ & \begin{tabular}{c}$x^4+ix^2-1$\end{tabular} & $1$ \\
    $z^2$ & \begin{tabular}{c}$x^4+2ix^2-1$\end{tabular} & $1$ \\
    $z^2+1-2i$ & \begin{tabular}{c}$x^4+x^2-1$\end{tabular} & $1.2720\cdots$ \\
   \hline
   \end{tabular}
  \end{table}

\clearpage
\begin{table}[hbtp]
   \caption{quadratic polynomial of the form (\ref{equation3}), with the roots $s,t\in\mathbb{Q}(\zeta_8)$ satisfy $s\in E_3$, $t\in E_4$ and $(x^2+sx+\zeta_8)(x^2+tx+\zeta_8^3)\in\mathbb{Z}[\sqrt{2}i][x]$, identifying $R_2(z)$ with $R_2(-z)$}
   \label{appendix 12}
   \small
   \begin{tabular}{|c|c|c|c|}
    \hline
    polynomial $R_2(z)$ & \begin{tabular}{c}polynomial\\ $(x^2+sx+\zeta_8)(x^2+tx+\zeta_8^3)$\end{tabular} & \begin{tabular}{c}maximal\\ absolute value\end{tabular} \\
    \hline\hline
    $z^2-(2+\sqrt{2}i)z+\sqrt{2}i$ & \begin{tabular}{c}$x^4+(2+\sqrt{2}i)x^3+2\sqrt{2}ix^2+(-2+\sqrt{2}i)x-1$\end{tabular} & $1$ \\
    $z^2-\sqrt{2}iz-\sqrt{2}i$ & \begin{tabular}{c}$x^4+\sqrt{2}ix^3+\sqrt{2}ix-1$\end{tabular} & $1$ \\
    $z^2$ & \begin{tabular}{c}$x^4+\sqrt{2}ix^2-1$\end{tabular} & $1$ \\
   \hline
   \end{tabular}
  \end{table}

\begin{table}[hbtp]
   \caption{quadratic polynomial of the form (\ref{equation4}), with the roots $s,t\in\mathbb{Q}(\zeta_{12})$ satisfy $s\in E_5$ and $t\in E_6$ and $(x^2+sx+\zeta_{12})(x^2+tx+\zeta_{12}^5)\in\mathbb{Z}[i][x]$, identifying $R_3(z)$ with $R_3(-z)$}
   \label{appendix 13}
   \small
   \begin{tabular}{|c|c|c|c|}
    \hline
    polynomial $R_3(z)$ & \begin{tabular}{c}polynomial\\ $(x^2+sx+\zeta_{12})(x^2+tx+\zeta_{12}^5)$\end{tabular} & \begin{tabular}{c}maximal\\ absolute value\end{tabular} \\
    \hline\hline
    $z^2-(2+i)z+i$ & \begin{tabular}{c}$x^4+(2+i)x^3+2ix^2-(2-i)x-1$\end{tabular} & $1$ \\
    $z^2-(1+2i)z+i$ & \begin{tabular}{c}$x^4+(1+2i)x^3+2ix^2-(1-2i)x-1$\end{tabular} & $1$ \\
    $z^2-(1-i)z-2i$ & \begin{tabular}{c}$x^4+(1-i)x^3-ix^2-(1+i)x-1$\end{tabular} & $1$ \\
    $z^2$ & \begin{tabular}{c}$x^4+ix^2-1$\end{tabular} & $1$ \\
   \hline
   \end{tabular}
  \end{table}

\begin{table}[hbtp]
   \caption{cubic polynomial with constant term $1$, with coefficients in $\mathbb{Z}$, whose roots have modulus less than $1.3$, at least one root has modulus$>1$}
   \label{appendix 14}
   \small
   \begin{tabular}{|c|c|}
    \hline
    polynomial $p(x)$ & maximal absolute value \\
    \hline\hline
    $x^3-x^2+1$ & $1.1509\cdots$ \\
    $x^3+x+1$ & $1.2106\cdots$ \\
   \hline
   \end{tabular}
  \end{table}

\begin{table}[hbtp]
   \caption{cubic polynomial with constant term $1$, with coefficients in $\mathcal{O}_{\mathbb{Q}(\sqrt{-D})}$, whose roots have modulus less than $1.3$, at least one root has modulus$>1$, identifying $p(x)$ with $\overline{p(x)}$}
   \label{appendix 15}
   \small
   \begin{tabular}{|c|c|}
    \hline
    polynomial $p(x)$ & maximal absolute value \\
    \hline\hline
    $x^3+\left(\frac{1}{2}-\frac{\sqrt{3}}{2}i\right)x^2+1$ & $1.1509\cdots$ \\
    $x^3-\left(\frac{1}{2}+\frac{\sqrt{3}}{2}i\right)x+1$ & $1.2106\cdots$ \\
    $x^3+x^2+(1-i)x+1$ & $1.2328\cdots$ \\
    $x^3-ix^2-x+1$ & $1.2878\cdots$ \\
    $x^3-ix+1$ & $1.2966\cdots$ \\
    $x^3-(1+i)x^2+ix+1$ & $1.2969\cdots$ \\
   \hline
   \end{tabular}
  \end{table}

\clearpage
\begin{table}[hbtp]
   \caption{cubic polynomial with constant term $\frac{1}{2}+\frac{\sqrt{3}}{2}i$, with coefficients in $\mathcal{O}_{\mathbb{Q}(\sqrt{-3})}$, whose roots have modulus less than $1.3$, at least one root has modulus$>1$}
   \label{appendix 16}
   \small
   \begin{tabular}{|c|c|}
    \hline
    polynomial $p(x)$ & maximal absolute value \\
    \hline\hline
    $x^3-\left(\frac{1}{2}+\frac{\sqrt{3}}{2}i\right)x^2+\left(-\frac{1}{2}+\frac{\sqrt{3}}{2}i\right)x+\left(\frac{1}{2}+\frac{\sqrt{3}}{2}i\right)$ & $1.2167\cdots$ \\
    $x^3-\left(\frac{1}{2}-\frac{\sqrt{3}}{2}i\right)x^2+\left(-\frac{1}{2}-\frac{\sqrt{3}}{2}i\right)x+\left(\frac{1}{2}+\frac{\sqrt{3}}{2}i\right)$ & $1.2167\cdots$ \\
    $x^3+x^2+x+\left(\frac{1}{2}+\frac{\sqrt{3}}{2}i\right)$ & $1.2167\cdots$ \\
    $x^3-x+\left(\frac{1}{2}+\frac{\sqrt{3}}{2}i\right)$ & $1.2746\cdots$ \\
    $x^3+\left(\frac{1}{2}-\frac{\sqrt{3}}{2}i\right)x+\left(\frac{1}{2}+\frac{\sqrt{3}}{2}i\right)$ & $1.2746\cdots$ \\
    $x^3+\left(\frac{1}{2}+\frac{\sqrt{3}}{2}i\right)x+\left(\frac{1}{2}+\frac{\sqrt{3}}{2}i\right)$ & $1.2746\cdots$ \\
   \hline
   \end{tabular}
  \end{table}

\begin{table}[hbtp]
   \caption{totally real, quintic algebraic integer, which is larger than $2$, whose conjugates have modulus$<2.1$}
   \label{appendix 17}
   \small
   \begin{tabular}{|c|c|}
    \hline
     algebraic integer & minimal polynomial \\
    \hline\hline
    $2.0264\cdots$ & $x^5+x^4-5x^3-5x^2+4x+3$\\
    $2.0384\cdots$ & $x^5-5x^3+4x-1$\\
    $2.0431\cdots$ & $x^5-5x^3-x^2+5x+1$\\
    $2.0541\cdots$ & $x^5-6x^3+8x-1$\\
    $2.0665\cdots$ & $x^5-6x^3-x^2+8x+3$\\
    $2.0850\cdots$ & $x^5-x^4-5x^3+4x^2+5x-3$\\
    $2.0911\cdots$ & $x^5-x^4-5x^3+4x^2+4x-1$\\
   \hline
   \end{tabular}
  \end{table}

\section{the proof of the fact in Remark \ref{Remark}}\label{appendix b}
 In Remark \ref{Remark}, we presented that for $0<t<2$, the equation
 \begin{align*}
  \frac{2\mathrm{sin}(x)\mathrm{cos}(2x)}{1+2\mathrm{sin}(x)\mathrm{sin}(2x)}=\mathrm{tan}(tx)
 \end{align*}
 has at most one root in the interval $\left(0, \frac{\pi}{4}\right)$.
 We prove this fact in this section.\par
 Denote
 \begin{align*}
  f(x)=1+2\mathrm{sin}(x)\mathrm{sin}(2x),\ g(x)=2\mathrm{sin}(x)\mathrm{cos}(2x).
 \end{align*}
 Now
 \begin{align*}
  \frac{g(x)}{f(x)}>0\text{ and }\mathrm{tan}(tx)>0
 \end{align*}
 for $0<x<\frac{\pi}{4}$.
 In the $xy$ coordinate plane, the curve $C:y=\frac{g(x)}{f(x)}$ and $D_t:y=\mathrm{tan}(tx)$ both through $(x,y)=(0,0)$.
 Assume $C$ and $D_t$ intersect at least one point for $0<x<\frac{\pi}{4}$.
 Let $(x,y)=(x_1,y_1)$ be an intersection point on the very right of $C$ and $D_t$ with $0<x<\frac{\pi}{4}$.
 Denote the line passing through $(0,0)$ and $(x_1,y_1)$ by $L$.
 First,
 \begin{align*}
  \left(\frac{d}{dx}\right)^2(\mathrm{tan}(tx))=\frac{2t^2\mathrm{sin}(tx)\mathrm{cos}(tx)}{\mathrm{cos}^4(tx)}>0
 \end{align*}
 for $0<x<\frac{\pi}{4}$ and so $\mathrm{tan}(tx)$ is a convex downward function on the interval $\left(0, \frac{\pi}{4}\right)$.
 Thus, $L$ is over $D_t$ for $0<x<x_1$ and $L$ is under $D_t$ for $x_1<x<\frac{\pi}{4}$.
 Next we show that $L$ intersects with $C$ at only $(x_1,y_1)$ transversally.
 By some calculation,
 \begin{align*}
  \left(\frac{d}{dx}\right)(g'(x)f(x)-f'(x)g(x))&=g''(x)f(x)-f''(x)g(x)\\
                                                &=-10\mathrm{sin}(x)\mathrm{cos}(2x)-8\mathrm{cos}(x)\mathrm{sin}(2x)\\
                                                &\quad\quad-16\mathrm{sin}(x)\mathrm{cos}(x)\mathrm{sin}^2(2x)-16\mathrm{sin}(x)\mathrm{cos}(x)\mathrm{cos}^2(2x)\\
                                                &<0
 \end{align*}
 for $0<x<\frac{\pi}{4}$ and so 
 \begin{align*}
  g'(x)f(x)-f'(x)g(x)
 \end{align*}
 is strictly decreasing on the interval $\left(0, \frac{\pi}{4}\right)$.
 Now by 
 \begin{align*}
  f'(x)=2\mathrm{cos}(x)\mathrm{sin}(2x)+4\mathrm{sin}(x)\mathrm{cos}(2x)>0,\ f(x)>0
 \end{align*}
 for $0<x<\frac{\pi}{4}$, $f(x)$ is strictly increasing on the interval $\left(0, \frac{\pi}{4}\right)$ and so
 \begin{align*}
  \left(\frac{d}{dx}\right)\left(\frac{g(x)}{f(x)}\right)=\frac{g'(x)f(x)-f'(x)g(x)}{f(x)^2}
 \end{align*}
 is strictly decreasing on $\left\{x\in\left(0, \frac{\pi}{4}\right)\mid g'(x)f(x)-f'(x)g(x)>0\right\}$.\par
 Thus, by $\frac{y_1}{x_1}>0$, there exists at most one root of the equation
 \begin{align*}
  \left(\frac{d}{dx}\right)\left(\frac{g(x)}{f(x)}\right)=\frac{y_1}{x_1}
 \end{align*}
 for $0<x<\frac{\pi}{4}$.
 By the mean value theorem, $L$ and $C$ intersects at only $(x,y)=(x_1,y_1)$ for $0<x<\frac{\pi}{4}$ and it is transversally.
 Also by $g'(0)f(0)-f'(0)g(0)>0$,
 \begin{align*}
  \left(\frac{d^2}{{dx}^2}\right)\left(\frac{g(x)}{f(x)}\right)<0,
 \end{align*}
 on $(0,\epsilon)$ for some $\epsilon>0$ and so $C:y=\frac{g(x)}{f(x)}$ is a convex upward function on $(0,\epsilon)$.
 Thus, $C$ is over $L$ for $0<x<x_1$ and $C$ is under $L$ for $x_1<x<\frac{\pi}{4}$.\par
 Therefore, $C$ and $D_t$ intersects at only $(x_1,y_1)$ and so the proof is concluded.

\renewcommand{\refname}{References}

\end{document}